\newtheorem{lemma}{Lemma}[section]
\newtheorem{proposition}{Proposition}[section]
\newtheorem{corollary}{Corollary}[section]
\newtheorem{theorem}{Theorem}[section]
\newtheorem{remark}{Remark}[section]
\numberwithin{equation}{section}
\newcommand{\s}{\vspace{2ex}}
\newcommand{\n}{\noindent}
\newcommand{\eps}{\varepsilon}
\newcommand{\R}{\mathbb{R}}
\newcommand{\Z}{\mathbb{Z}}
\newcommand{\E}{\mathbb{E}}
\newcommand{\PP}{\mathbb{P}}
\title{Almost-Sure Stability of the Single Mode Solution of a Noisy Autoparametric Vibration Absorber}
\author{{Peter H. Baxendale} \and
{N. Sri Namachchivaya}}
\begin{document}

\maketitle

\begin{abstract}

For a pendulum suspended below a vibrating block with white noise forcing, the solution in which the pendulum remains vertical is called the single mode solution.  When this solution becomes unstable there is energy transfer from the block to the pendulum, helping to absorb the vibrations of the block.   We study the Lyapunov exponent $\lambda$ governing the almost-sure stability of the process linearized along the single mode solution.  The linearized equation is excited by a combination of white and colored noise processes, which makes the evaluation of $\lambda$ non trivial.  For white noise forcing of intensity $\eps \nu$ we prove $\lambda(\eps) = \lambda_0+ \eps^2 \lambda_2 + O(\eps^4)$ as $\eps \to 0$, where $\lambda_0$ and $\lambda_2$ are given explicitly in terms of the parameters of the system.

\end{abstract}

\s

\n Keywords: autoparametric system, Lyapunov exponent, stability boundary, resonance.\\
2020 Mathematics Subject Classification: 37H15, 60H10 (Primary), 70K20 (Secondary)

\section{Introduction} 

Autoparametric systems are vibrating systems that consist of two subsystems.  The primary system is an oscillator that is directly excited by some external forcing, and the secondary system is coupled nonlinearly to the oscillator in such a way that it can be at rest while the oscillator is vibrating. 
 
  \begin{figure}[h] \label{fig block pend}
\begin{center}
\includegraphics[scale=1.2]{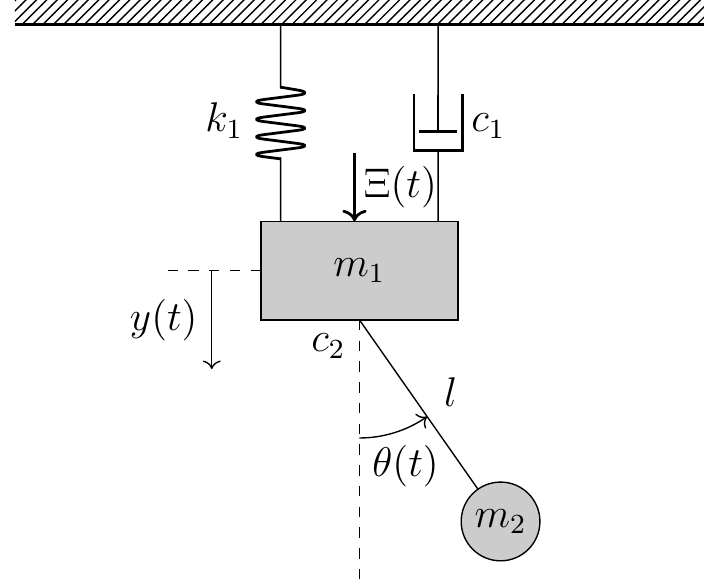}
\end{center}
 \caption{Block and pendulum}
 \end{figure}
In this paper we consider the two-degree-of-freedom system shown in Figure \ref{fig block pend}.  The primary system consists of a block of mass $m_1$ attached by a linear spring of stiffness $k_1$ and a viscous dashpot with a damping coefficient $c_1$.   The block is excited directly by (downward) forcing $\Xi(t)$.  The secondary system comprises a simple pendulum of length $\ell$ attached to the block.  The mass of the pendulum bob is $m_2$ and the pendulum motion is damped by a viscous damper of coefficient $c_2$.  This system appears in Hatwal, Mallik and Ghosh \cite{HMG83}.  See also \cite{BCJ94, BBD96} and a similar model with a non-linear spring and a compound pendulum in \cite{WarKec06}. A closely related system with a vertically mounted cantilever beam with a tip mass appears in Haxton and Barr \cite{HaxBar72}.

It is a characteristic property of an autoparametric system that there is a solution involving forced motion of the primary system while the secondary system remains at rest.  Here it corresponds to the ``single mode'' solution in which the pendulum remains directly below the vibrating block at all times.   As parameters in the system are varied, it may happen that the single mode solution becomes unstable and the pendulum begins to move.  In this case the motion of the primary system causes excitation of the secondary
system, and there is energy transfer from the vibrating block to the pendulum. In effect, 
the pendulum acts as a vibration absorber.
Our primary concern is the almost-sure stability analysis of the single mode solution when the forcing function $\Xi(t)$ is a multiple of white noise.

\s
 
In addition to the block and pendulum and block and beam examples mentioned above, autoparametric systems are used to model the dynamics of structural and mechanical systems such as the vibration of an initially deflected shallow arch \cite{TSB94}, in-plane and out-of-plane motions of suspended cables \cite{BRV}, and the pitching and rolling motions of a ship \cite{NMM73, NR09}.  A larger collection of examples is given in the monograph \cite{TRVN}.
 
Periodically excited autoparametric systems have been studied extensively; see for example \cite{HaxBar72, NayMoo79, HMG83, CarRob88, BCJ94, BBD96}.   These are deterministic systems, and the most interesting situations occur when the natural frequencies of the primary~(excited mode) and the secondary~(unexcited) systems are in 2:1 resonance.   Issues of bifurcation theory arise, and numerical computations frequently show chaotic behavior.  

Autoparametric systems with stochastic forcing have also been studied. 
Gaussian and non-Gaussian closure techniques were used in \cite{IbrRob77, Rob80, IbrHeo86, ChaIbr97, LeeCho00} to approximate the moments of solutions of the associated Fokker-Planck equation.  Sufficient conditions for almost-sure stability of the single mode solution were obtained in \cite{Aria91}.  Stochastic averaging techniques were used in \cite{SKA07}.  

\s

The equations of motion for the block and pendulum system shown in Figure \ref{fig block pend} are 
\begin{equation}
\begin{split}
(m_1+m_2) \ddot y(t) + c_1 \dot y(t) + k_1 y(t) - m_1 \ell (\ddot \theta(t) \sin\theta (t) + {\dot \theta}^2(t) \cos \theta(t) ) &=\Xi(t) \\
m_2 \ell^2 \ddot \theta(t) + c_2 \dot \theta(t) +  m_2 \ell ( g  - \ddot y(t) ) \sin \theta(t) &=0,
\label{eq motion}
\end{split}
\end{equation}
where $y$ represents the downwards displacement of the block relative to its rest (unforced) position, and $\theta$ is the angle of the pendulum, see \cite{HMG83}.  We specialize to the case of white noise forcing, that is $\Xi(t) = \hat{\nu} \dot{W}(t)$ where $W$ is a standard Wiener process and $\hat{\nu}$ represents the noise intensity. 

 Letting $\eta = y/\ell$ represent the dimensionless position of the block, the equations of motion \eqref{eq motion} can be rewritten as
\begin{equation}
\label{sys}
  \begin{split} 
\ddot{\eta}(t) + 2 \zeta_1 \dot {\eta}(t) + \chi^2 \eta(t) - R \big(\ddot{\theta}(t) \sin{\theta}(t)  + {\dot{\theta}}^2(t) \cos{\theta}(t)\big) &= \nu \dot{W}(t)\\
 \ddot{\theta}(t) +  2 \zeta_2  \dot {\theta}(t) +  \big(\kappa^2  - \ddot{\eta}(t)  \big)\sin{\theta}(t) &= 0,
\end{split}
\end{equation}
where  
the constants $\zeta_1$ and $\zeta_2$ are scaled damping coefficients, $\chi^2 = k_1/(m_1+m_2)$, and $\kappa= \sqrt{g/\ell}$ is the frequency of the undamped pendulum.  The parameter $R= m_2/(m_1+m_2)$, $ 0 < R < 1$, represents the mass ratio, and $\nu = \hat{\nu}/[\ell(m_1+m_2)]$ is the scaled noise intensity.

The rigorous interpretation of \eqref{sys} as a 4-dimensional stochastic differential equation (SDE) is carried out in Section \ref{sec posed}.  See Theorem \ref{thm well-posed} which guarantees existence and uniqueness and exponential moments for solutions with arbitrary initial conditions.  
     The single mode solution of the system \eqref{sys} is of the form
\begin{equation}\label{sin mode}
\big(\eta(t), \dot \eta(t)\big)= \big(\bar\eta(t), \dot {\bar\eta}(t)\big
 )  \quad \text{and}\quad \big(\theta(t), \dot \theta(t) \big)=\big(0, 0\big),
\end{equation}
 where the motion  $\{(\bar{\eta}(t),\dot{\bar{\eta}}(t)):t \ge 0\} $ of the block is described by
\begin{equation}\label{bareta}
 \ddot{\bar{\eta}}(t)+2 \zeta_1 \dot{\bar{\eta}}(t)  + \chi^2 \bar{\eta}(t)  = \nu \dot{W}(t).
\end{equation}
The single mode \eqref{bareta} has a stationary solution $\{(\bar{\eta}(t),\dot{\bar{\eta}}(t)):t \ge 0\} $ with Gaussian stationary probability measure with mean $(0,0)$ and covariance matrix $R = \dfrac{\nu^2}{4 \zeta_1 }\begin{pmatrix} 1/\chi^2 & 0 \\ 0 & 1 \end{pmatrix}$.   

The stability or instability of the single mode solution is governed by the large time behavior of the process  obtained by linearizing the 4-dimensional SDE along the single mode solution $(\bar{\eta}(t),\dot{\bar{\eta}}(t), 0, 0)$.  In particular it is determined by the sign of the Lyapunov exponent
  \begin{equation}
\label{lam intro}
\lambda :=\lim \limits_{t\to\infty}\frac{1}{t}\log\|\left(\varphi(t), \dot\varphi(t) \right)\|
\end{equation} 
associated with the linear SDE
     \begin{equation}
    \ddot{\varphi}(t)   + 2 \zeta_2 \dot{\varphi}(t)   + \big(\kappa^2 +2 \zeta_1 \dot{\bar{\eta}}(t)   + \chi^2 \bar{\eta}(t)\big) \varphi(t)  = \nu \varphi(t)   \dot{W}(t) \label{phi bareta}  
 \end{equation}
where $(\bar{\eta}(t),\dot{\bar{\eta}}(t)$ is given by \eqref{bareta}, see Section \ref{sec stab sing}.  

The novelty of the linear SDE \eqref{phi bareta} is that it is parametrically excited by the colored noise processes $\bar{\eta}(t)$ and $\dot{\bar{\eta}}(t)$ as well as the white noise (generalized) process $\dot{W}(t)$ which is driving them in \eqref{bareta}.   The techniques and results developed in this paper apply to a larger class of excitations of \eqref{phi bareta} consisting of combinations of Gaussian colored noise and white noise processes.  This larger class is described in Section \ref{sec more gen}. 

 It is shown in Section \ref{sec Khas} that the right side of \eqref{lam intro} exists as an almost-sure limit, and is given by Khas'minskii's integral formula \eqref{lam Khas} for all initial values $(\bar{\eta},\dot{\bar{\eta}}, \varphi,\dot{\varphi})$ with $(\varphi,\dot{\varphi}) \neq (0,0)$.  
 Although the integrand $Q(v,\psi)$ in \eqref{lam Khas} is given explicitly, the measure $m$ is characterized only as the unique invariant probability measure for an associated diffusion process, and it is hard to evaluate the integral and obtain an exact formula for the Lyapunov exponent.  However, assuming the pendulum is underdamped, so that the damped frequency $\kappa_d = \sqrt{\kappa^2 - \zeta_2^2}$ is real and positive, a simple bound on $Q$ gives the following upper bound on the Lyapunov exponent, 
  \begin{equation} \label{lam upper} 
 \lambda \le - \zeta_2 + \frac{\nu}{2 \kappa_d}\sqrt{\frac{\chi^2+4 \zeta_1^2}{4 \zeta_1}} + \frac{\nu^2}{2 \kappa_d^2}.
\end{equation}
see Proposition \ref{prop lam upper}.  This gives almost-sure stability for sufficiently small noise intensity $\nu$.

The main results in the paper are given in Section \ref{sec scaling}.  Theorem \ref{thm lam} and Propositions \ref{prop Vdot} and \ref{prop psd xi} give rigorous asymptotic estimates for the Lyapunov exponent $\lambda(\eps)$ for the system obtained when the noise intensity is rescaled $\nu \rightsquigarrow \eps \nu$. For the block and pendulum system we show
       \begin{equation} \label{lam est}
      \lambda(\eps)  = -\zeta_2 + \eps^2 \frac{2 \kappa_d^2 \nu^2}{(\chi^2-4 \kappa_d^2)^2+16 \zeta_1^2 \kappa_d^2} + O(\eps^4)
      \end{equation}
as $\eps \to 0$.   Section \ref{sec scaling2} contains results in the case when the noise intensity and the pendulum damping are both rescaled: $\nu \rightsquigarrow \eps \nu$ and $\zeta_2 \rightsquigarrow \eps^2 \zeta_2$.  There is a brief discussion of the similarities and differences in the 2:1 resonance effects for white noise and  periodic forcing.   The final three sections contain the proofs.

\section{Well-posedness for the nonlinear system} \label{sec posed}

Since $0 < R < 1$, the equations \eqref{sys} can be rearranged to give 
     \begin{equation}
    \begin{split}
    \ddot{\eta}(t)&+ \frac{2 \zeta_1 \dot{\eta}(t) + \chi^2 \eta(t) -R \dot{\theta}^2(t) \cos \theta(t) + 2R\zeta_2 \dot{\theta}(t) \sin \theta(t) +\kappa^2R \sin^2 \theta(t)}{(1-R\sin^2 \theta(t))} \\
    & \hspace{40ex}=  \frac{\nu}{(1-R\sin^2 \theta(t))}\dot{W}(t)   
      \\
   \ddot{\theta}(t) &+ \frac{2 \zeta_2 \dot{\theta}(t) + \kappa^2 \sin \theta(t) +2 \zeta_1 \dot{\eta}(t) \sin \theta(t) +\chi^2 \eta \sin \theta(t) -R \dot{\theta}^2(t) \sin \theta(t) \cos \theta(t) }{(1-R\sin^2 \theta(t))}\\ 
   &  \hspace{40ex}=  \frac{ \nu \sin \theta(t)}{(1-R\sin^2 \theta(t))} \dot{W}(t). \end{split} 
   \label{sys2}
\end{equation} 
  This second order system for $(\eta,\theta)$ can now be turned into a first order system of stochastic differential equations for the process $(\eta,\dot{\eta},\theta,\dot{\theta})$ taking values in the space $N = \R^2 \times \R/(2 \pi \Z) \times \R$.  Write
   $$(v_1,v_2,u_1,u_2) = (\eta,\dot{\eta},\theta, \dot{\theta}).
   $$
Then \eqref{sys2} gives
   \begin{equation}
     \begin{split}
    dv_1 & = v_2\,dt\\ 
    dv_2 & = \frac{-2 \zeta_1 v_2 - \chi^2 v_1 +R u_2^2 \cos u_1 - 2R\zeta_2 u_2 \sin u_1 -R \kappa^2 \sin^2 u_1}{(1-R\sin^2 u_1)} dt \\
    & \hspace{10ex} + \frac{\nu}{(1-R\sin^2 u_1)}dW(t) \\
     du_1 & = u_2 \,dt\\
  du_2 & =   \frac{-2 \zeta_2 u_2 - \kappa^2 \sin u_1 -2 \zeta_1 v_2 \sin u_1 - \chi^2 v_1 \sin u_1 + R u_2^2\sin u_1 \cos u_1 }{(1-R\sin^2 u_1)}dt \\
  & \hspace{10ex} +  \frac{ \nu \sin u_1}{(1-R\sin^2 u_1)} dW(t).
   \end{split} \label{sde}
\end{equation} 
We see that $du_1(t)dW(t) = 0$ and so the It\^o and Stratonovich interpretations of \eqref{sde} are identical.

 Let ${\cal L}$ denote the generator of the diffusion $\{(v_1(t),v_2(t),u_1(t),u_2(t)): t \ge 0\}$ given by \eqref{sde}.  For ease of notation write $U(t) = (v_1(t),v_2(t),u_1(t),u_2(t)) \in N$.  Noting that $|u_1(t)| \le 1$ we abuse notation by writing $\|U(t)\| = \sqrt{v_1^2(t)+v_2^2(t)+u_2^2(t)}$. 
  
\begin{theorem}  \label{thm well-posed} (i) For every initial condition $U(0) \in N$  the system \eqref{sde} has a unique solution $\{U(t): t \ge 0\}$ valid for all $t \ge 0$.  

(ii) There exists $b > 0$ such that for every initial $U(0)$ the process $\{U(t): t \ge 0\}$ satisfies the condition $\|U(t)\| \le b$ for infinitely many $t$ as $t \to \infty$. 

(iii) There exist $\beta > 0$ and $K$ such that every invariant probability $\pi$ for \eqref{sde} satisfies 
      $$
      \int_N \exp\bigl(\beta \|U\|^2\bigr) d\pi(U) \le K.
      $$ 

(iv) There exist $0 < \beta \le \beta_1 $ and $K_1$ and for every $\gamma > 0$ there exists $K_2$ such that
     $$
     \E\exp \bigl(\beta \|U(t)\|^2\bigr) \le K_1 e^{-\gamma t} \exp\left(\beta_1 \|U(0)\|^2\right) + K_2
     $$
for all initial conditions $U(0) \in N$ and all $t \ge 0$. 
\end{theorem}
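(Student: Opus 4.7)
The plan is to construct a Lyapunov function $V:N\to[0,\infty)$ comparable to $\|U\|^2$ satisfying ${\cal L}V \le -\alpha V + C$ for positive constants, and then to upgrade to an exponential moment bound by studying $\phi = e^{\beta V}$ for small $\beta>0$. The natural candidate is the Hamiltonian of the Lagrangian system underlying \eqref{sys},
\[
\hat E(U) = \tfrac12 v_2^2 + \tfrac{R}{2}u_2^2 - R v_2 u_2 \sin u_1 + \tfrac{\chi^2}{2}v_1^2 + R\kappa^2(1-\cos u_1).
\]
The $(v_2,u_2)$-quadratic form has determinant $R(1-R\sin^2 u_1)/4 \ge R(1-R)/4 > 0$, so together with compactness of the $u_1$-circle one obtains $\hat E \asymp \|U\|^2$ up to the bounded piece $R\kappa^2(1-\cos u_1)$. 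Since $\hat E$ produces no strict dissipation in the $v_1$ direction, I would take $V = \hat E + \delta v_1 p_\eta$, where $p_\eta := v_2 - R u_2 \sin u_1$ is the conjugate momentum to $\eta$ and $\delta>0$ is small; for such $\delta$, $V$ is still comparable to $\|U\|^2$ on all of $N$.

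An It\^o computation, cleanest when carried out on the Lagrangian form \eqref{sys} with non-conservative generalised forces $-2\zeta_1 v_2 + \nu \dot W$ and $-2R\zeta_2 u_2$ (legitimate because $du_1\,dW = 0$, so It\^o and Stratonovich agree), gives
\[
{\cal L}\hat E = -2\zeta_1 v_2^2 - 2R\zeta_2 u_2^2 + \frac{\nu^2}{2(1-R\sin^2 u_1)},
\]
with the last term bounded uniformly by $\nu^2/(2(1-R))$. The Hamiltonian identity $\dot p_\eta = -\chi^2 v_1 - 2\zeta_1 v_2$ (the $\eta$ equation rewritten in canonical form) together with the absence of noise in $v_1$ then yields ${\cal L}(v_1 p_\eta) = v_2^2 - R v_2 u_2 \sin u_1 - \chi^2 v_1^2 - 2\zeta_1 v_1 v_2$, free of any cubic terms. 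Choosing $\delta$ small enough to absorb $\delta v_2^2$ and the cross-terms into the existing damping yields ${\cal L}V \le -\alpha(v_1^2 + v_2^2 + u_2^2) + C$, hence ${\cal L}V \le -\alpha' V + C'$ via $V \asymp \|U\|^2$.

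All four parts then follow by standard Foster--Lyapunov arguments. Part (i) comes from this bound via a localised Dynkin formula ruling out explosion. For (iv), applying It\^o to $\phi = e^{\beta V}$ gives
\[
{\cal L}\phi = \phi\bigl(\beta {\cal L}V + \tfrac{1}{2}\beta^2\,\Gamma(V,V)\bigr),
\]
where $\Gamma(V,V) = (\sigma^\top \nabla V)^2$; the same Hamiltonian cancellation used above reduces this to $\Gamma(V,V) = \nu^2(v_2 + \delta v_1)^2 \le C_0 V$, so for $\beta$ small enough ${\cal L}\phi \le -\gamma \phi + K$, and Dynkin/Gronwall deliver $\E e^{\beta V(t)} \le K_1 e^{-\gamma t} e^{\beta V(U(0))} + K_2$, which converts to (iv) via $V\asymp\|U\|^2$. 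Part (iii) follows by integrating ${\cal L}\phi \le -\gamma\phi + K$ against any invariant probability $\pi$ (its LHS has zero integral), giving $\int e^{\beta V}\,d\pi \le K/\gamma$. For (ii), outside the compact sublevel set $\{V \le 2C'/\alpha'\}$ the inequality ${\cal L}V \le -\alpha' V/2$ makes $V$ a proper supermartingale with $\mathcal L V \le -C''$, so the hitting time to that sublevel set is a.s.\ finite with finite mean; the strong Markov property then produces infinitely many returns.

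The principal obstacle is the algebra for ${\cal L}\hat E$ and $\Gamma(V,V)$: the $1/(1-R\sin^2 u_1)$ factors throughout \eqref{sde} make a direct componentwise expansion tedious. The Lagrangian bypass indicated above collapses the deterministic part to the classical energy balance $-2\zeta_1 v_2^2 - 2R\zeta_2 u_2^2$ and the noise correction to a one-line quadratic-variation computation, after which the remaining steps assemble mechanically.
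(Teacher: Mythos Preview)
Your proposal is correct and coincides with the paper's argument essentially line by line: the paper uses the same Lyapunov function $F=E+\alpha\, v_1(v_2-R u_2\sin u_1)$, the same formulas for ${\cal L}E$ and ${\cal L}\bigl(v_1(v_2-R u_2\sin u_1)\bigr)$, and the same exponential step via ${\cal L}(e^{\beta F})=(\beta{\cal L}F+\beta^2\Gamma(F,F))e^{\beta F}$, then quotes Khas'minskii and Meyn--Tweedie for (i)--(iv). Your explicit computation $\Gamma(V,V)=\nu^2(v_2+\delta v_1)^2$ is in fact sharper than the paper's, which only records $\Gamma(F,F)\le c_7\|U\|^2$; the paper also notes that once $\beta$ is fixed small, the drift inequality ${\cal L}(e^{\beta F})\le -\gamma e^{\beta F}+c_6$ holds for \emph{every} $\gamma>0$ (with $c_6$ depending on $\gamma$), which is what (iv) demands---you should make that quantifier explicit.
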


The proof of Theorem \ref{thm well-posed} is given in Section \ref{sec wellposed}.

\begin{remark} \label{beam}   The corresponding calculation for white noise forcing of the beam model discussed in {\rm \cite{IbrRob77, Rob80, LeeCho00}} fails because it involves division by a term of the form $1-R \theta^2$ rather than $1-R \sin^2 \theta$.  There is no guarantee that the full non-linear system, for example \cite[eqns (4,5)]{IbrRob77}, has a solution valid for all time $t \ge 0$. 
\end{remark}

\begin{remark} \label{rem compound}  Suppose that the simple pendulum is replaced by a compound pendulum with mass $m_2$, moment of inertia $I$ about the pivot point, and distance $d$ of the center of mass from the pivot point.  The equations of motion are now 
   \begin{equation}
\begin{split}
(m_1+m_2) \ddot y(t) + c_1 \dot y(t) + k_1 y(t) - m_2 d \bigl(\ddot \theta(t) \sin\theta (t) + {\dot \theta}^2(t) \cos \theta(t) \bigr) &=\Xi(t) \\
I \ddot \theta(t) + c_2 \dot \theta(t) +  m_2 d ( g  - \ddot y(t) ) \sin \theta(t) &=0.
\label{eq motion comp}
\end{split}
\end{equation} 
Letting $\eta = y/L$ where $L = I/(m_2d)$ is the effective length of the compound pendulum the equations of motion \eqref{eq motion comp} can be written in the form \eqref{sys} where now $\kappa = \sqrt{g/L}$ is the frequency of the compound pendulum, and $R:=(d/L) \cdot m_2/(m_1+m_2) < 1$ because $L \ge d$.
\end{remark}

 There exists at least one stationary solution, the single mode solution described in (\ref{sin mode},\ref{bareta}) above.  In addition there is an ``unstable single mode'' solution in which the pendulum is at rest directly above the block.  It is described by \eqref{bareta} together with $(\theta(t),\dot{\theta}(t)) = (\pi,0)$.  There may or may not be other stationary solutions.  We conjecture the existence of other ``nonlinear'' solutions depends on the stability of the single mode solution.
  
\subsection{Stability of the single mode solution} \label{sec stab sing}

In order to determine the stability of the single mode solution we study the long time behavior of the linearization of the system along the single mode solution (\ref{sin mode},\ref{bareta}).  Writing $\eta(t) = \bar{\eta}(t) + \delta x(t) $ and $\theta(t) = 0 + \delta \varphi(t)$ in \eqref{sys} and letting $\delta \to 0$ gives the set of variational equations:
 \begin{equation}
 \begin{split}  \ddot{x}(t) + 2 \zeta_1 \dot{x}(t)  + \chi^2 x(t)  &  =  0   \\
  \ddot{\varphi}(t)   + 2 \zeta_2 \dot{\varphi}(t)   + \big(\kappa^2 -\ddot{\bar{\eta}}(t)\big) \varphi(t)   & =  0,
 \end{split}\label{lin}
\end{equation}
where $\bar{\eta}_t$ is given by \eqref{bareta}.
More rigorously, 
linearizing the system \eqref{sde} along the solution $(\bar{\eta},\dot{\bar{\eta}},0,0)$ gives a first order system for $(x,\dot{x},\varphi,\dot{\varphi})$ which is equivalent to 
 \begin{equation}
 \begin{split}  \ddot{x}(t) + 2 \zeta_1 \dot{x}(t)  + \chi^2 x(t)  &  =  0   \\
  \ddot{\varphi}(t)   + 2 \zeta_2 \dot{\varphi}(t)   + \big(\kappa^2 +2 \zeta_1 \dot{\bar{\eta}}(t)  + \chi^2 \bar{\eta}(t)\big) \varphi(t)   & =  \nu \varphi(t)   \dot{W}(t).
 \end{split}\label{lin2}
\end{equation}
We see that the second line in \eqref{lin} together with the information about $\bar{\eta}(t)$ in \eqref{bareta} is rigorously interpreted as the second line in \eqref{lin2}.

The stability of the single mode solution is determined by the long time behavior of the linearized process $\{(x(t),\dot{x}(t),\varphi(t),\dot{\varphi}(t)): t \ge 0\}$.  The top equation in \eqref{lin2} is damped and unforced, so   $(x(t),\dot{x}(t)) \to (0,0)$ as $t\to\infty$.  Therefore it is enough to consider the parametric exitation of $\{(\varphi(t),\dot{\varphi}(t)): t \ge 0\}$ caused by the single mode vibration $\bar{\eta}$.  For simplicity of notation we replace $\bar{\eta}$ with $\eta$ and consider the long term growth or decay rate for the process $\{(\varphi(t),\dot{\varphi}(t)): t \ge 0\}$ given by
      \begin{align}
     \ddot{\eta}(t) +2 \zeta_1 \dot{\eta}(t)  + \chi^2 \eta(t) &  = \nu \dot{W}(t) \label{eta}\\
     \ddot{\varphi}(t)   + 2 \zeta_2 \dot{\varphi}(t)   + \big(\kappa^2 +2 \zeta_1 \dot{\eta}(t)   + \chi^2 \eta(t)\big) \varphi(t)  &   = \nu \varphi(t)   \dot{W}(t), \label{phi eta}  
 \end{align}
The stability of the single mode solution is determined by the almost-sure exponential growth rate of~\eqref{phi eta}, expressed precisely as the Lyapunov exponent 
\begin{equation}
\label{E:lambda2}
\lambda :=\lim \limits_{t\to\infty}\frac{1}{t}\log\|\left(\varphi(t), \dot\varphi(t) \right)\|.
\end{equation}
We will see in Corollary \ref{cor ergodic} that the almost-sure limit in \eqref{E:lambda2} exists and takes the same value for all initial conditions $(\eta(0),\dot{\eta}(0),\varphi(0),\dot{\varphi}(0))$ with $(\varphi(0),\dot{\varphi}(0)) \neq (0,0)$.

\begin{remark}  This paper concentrates on the well-posedness and evaluation of $\lambda$.  
  The interpretation of $\lambda$, and especially of the sign of $\lambda$, should involve statements about the behavior of the original non-linear system \eqref{sys}.  We conjecture that if $\lambda < 0$ then all solutions with $(\varphi(0),\dot{\varphi}(0)) \neq (\pi,0)$ converge to the single mode solution, and that if $\lambda > 0$ then all solutions with $(\varphi(0),\dot{\varphi}(0)) \not\in \{(0,0),(\pi,0)\}$ converge to a stationary solution with values in $\R^2 \times \bigl((\R/(2 \pi \Z) \times \R) \setminus \{(0,0),(\pi,0)\}\bigr)$.  In particular, if $\lambda <0$ then the single mode and the unstable single mode solutions are the only stationary solutions of \eqref{sys}, while if $\lambda >0$ then there is also a stationary solution satisfying $(\varphi(t),\dot{\varphi}(t)) \not\in \{(0,0),(\pi,0)\}$ for all $t \ge 0$. 
   
\end{remark}



\section{A more general linear system} \label{sec more gen}
Much of our analysis and computation of the Lyapunov exponent \eqref{E:lambda2} is valid in a more general setting.   Notice that \eqref{phi eta} can be rewritten as 
      \begin{equation} \label{phi eta2}
      \ddot{\varphi}(t) + 2 \zeta_2 \dot{\varphi}(t) + \bigl(\kappa^2 - \ddot{\eta}(t)\bigr) \varphi(t)  = 0. 
    \end{equation} 
with $\eta(t)$ given by \eqref{eta}. We will replace the generalized process $\ddot{\eta}(t)$ by a more general $\xi(t)$.  

Let $\{v(t): t \ge 0\}$ be the $\R^d$ valued Ornstein-Uhlenbeck process
given by 
     \begin{equation} \label{v}
     dv(t) = Av(t) dt +  B dW(t) 
    \end{equation}
where $A$ is a $d \times d$ matrix, and $B$ is a $d \times m$ matrix and $W(t)$ is a standard $m$-dimensional Brownian motion.  See for example Gardiner \cite[Sect 4.4]{Gard} or Liberzon and Brockett \cite{LibBro}.  We shall assume throughout that the eigenvalues of $A$ have strictly negative real parts, and that $(A,B)$ is a controllable pair, that is 
\begin{equation} \label{CP}
    \mbox{rank}\{B,AB, \ldots, A^{d-1}B\} = d.
   \end{equation}
   
Since the eigenvalues of $A$ have strictly negative real parts, the equation \eqref{v} has a stationary solution 
    $$
    v(t) = \int_{-\infty}^t e^{(t-s)A} BdW(s)
    $$
with a stationary probability measure $\mu$, say, which is mean-zero Gaussian with $d\times d$ covariance matrix $R$ given by  \begin{equation} \label{R}
    R_{jk}  = \sum_{\ell = 1}^m \int_{-\infty}^0 \bigl(e^{-sA}Be_\ell\bigr)_j\bigl(e^{-sA}Be_\ell\bigr)_k ds = \int_0^\infty \bigl(e^{tA}BB^\ast e^{tA^\ast}\bigr)_{jk}dt.
    \end{equation}    
The controllable pair assumption is then equivalent to the assumption that $R$ is invertible, and this in turn is equivalent to the assumption that $\mbox{supp}(\mu) = \R^d$.

       Choose $a \in\R^d$ and $\gamma \in \R^m$ and define
   \begin{equation} \label{xi}
   \xi(t) = \sum_{j=1}^d a_jv_j(t) + \sum_{\ell = 1}^m \gamma_\ell \dot{W}_\ell(t) = \langle a,v(t) \rangle  + \langle \gamma, \dot{W}(t)\rangle.
   \end{equation}
At the cost of replacing $m$ with $m+1$ and adjoining a column of zeroes to the matrix $B$, we can include the situation where $\xi(t)$ includes some white noise which is independent of the white noise driving $v(t)$.  With this choice of $\xi(t)$  the equation
     \begin{equation} \label{phi xi2}
      \ddot{\varphi}(t) + 2\zeta_2 \dot{\varphi}(t) + \bigl(\kappa^2 - \xi(t)\bigr) \varphi(t)  = 0 
    \end{equation}    
 has a well-defined meaning as the second order SDE
     \begin{equation} \label{phi xi}
      \ddot{\varphi}(t) + 2 \zeta_2 \dot{\varphi}(t) + \bigl(\kappa^2 - \langle a,v(t)\bigr) \varphi(t)  =    \sum_{\ell = 1}^m \gamma_\ell \varphi(t)\dot{W}_\ell(t). 
    \end{equation}   
We will consider the Lyapunov exponent 
   \begin{equation}
\label{lam gen}
\lambda :=\lim \limits_{t\to\infty}\frac{1}{t}\log\|\left(\varphi(t), \dot\varphi(t) \right)\|.
\end{equation}
for the more general system given by (\ref{v},\ref{phi xi}).

\s

Since $v(t) = \begin{pmatrix} \eta(t) \\ \dot{\eta}(t)\end{pmatrix}$ satisfies 
    \begin{equation}  \label{v eta}
  dv(t)   = 
  \begin{pmatrix} 0 & 1 \\ -\chi^2 & -2 \zeta_1  \end{pmatrix}  v(t) dt 
    + \begin{pmatrix} 0 \\ \nu\end{pmatrix}dW(t). 
                      \end{equation} 
    and 
  $$
  \ddot{\eta}(t) = - \chi^2 \eta(t) -2 \zeta_1 \dot{\eta}(t)  + \nu \dot{W}(t) 
  $$
we see that the system (\ref{eta},\ref{phi eta}) is a special case of (\ref{v},\ref{phi xi}).  

\section{Khas'minskii's integral formula} \label{sec Khas}

Define $u(t) = \begin{pmatrix} \varphi(t) \\ \dot{\varphi}(t) \end{pmatrix}$, then \eqref{phi xi} can be written as the 2-dimensional linear SDE
     \begin{equation}  \label{uu xi}
     du(t)
        = \begin{pmatrix} 0 & 1 \\[1ex] -\kappa^2 +\langle a,v(t)\rangle & -2 \zeta_2 \end{pmatrix} u(t)  dt  +\sum_{\ell = 1}^m  \begin{pmatrix} 0 & 0 \\[1ex] \gamma_\ell & 0\end{pmatrix}u(t) dW_\ell(t).
               \end{equation} 
We note that the Stratonovich and It\^o interpretations of \eqref{uu xi} are the same.   Following Khas'minskii \cite{Khas67}, write $u(t) = \|u(t)\|\begin{pmatrix} \cos \psi(t) \\ \sin \psi(t) \end{pmatrix}$.   Applying It\^{o}'s formula to \eqref{uu xi} gives
  $$
  d(\log \|u(t)\|) =  Q(v(t),\psi(t))dt + \sum_{\ell = 1}^m \gamma_\ell \sin \psi(t) \cos \psi(t) dW_\ell(t)
  $$
and 
  \begin{equation} \label{psi}
   d \psi(t) = h(v(t),\psi(t))dt +\sum_{\ell = 1}^m \gamma_\ell \cos^2 \psi(t) dW_\ell(t)
   \end{equation}
where
  \begin{equation} \label{Q}
  Q(v,\psi) =  \bigl(1-\kappa^2+\langle a,v\rangle\bigr) \sin\psi \cos \psi -2\zeta_2 \sin^2 \psi +\frac{\|\gamma\|^2}{2} \cos^2 \psi \cos 2\psi
  \end{equation}
and
  $$
   h(v,\psi) = -1+\bigl(1-\kappa^2 +\langle a,v\rangle\bigr) \cos^2 \psi - 2 \zeta_2 \sin \psi \cos \psi -  \|\gamma\|^2 \sin \psi \cos^3 \psi.
   $$                
The process $\{(v(t),\psi(t)): t \ge 0\}$ given by (\ref{v},\ref{psi}) is a diffusion on $M:= \R^d \times \R/(2 \pi \Z)$ with generator ${\cal A}$, say.  For $(\varphi(0),\dot{\varphi}(0)) \neq (0,0)$ we have
    \begin{align}
    \lim_{t \to \infty} \frac{1}{t} \log \sqrt{\varphi^2(t) + \dot{\varphi}^2(t)} \nonumber
     & = \lim_{t \to \infty} \frac{1}{t} \log \|u(t)\| \\ 
       & = \lim_{t \to \infty} \frac{1}{t} \left[ \int_0^t Q(v(s),\psi(s))ds + \sum_{\ell = 1}^m \int_0^t \frac{\gamma_\ell}{2} \sin 2\psi(s)\, dW_\ell(s) \right] \label{int}
    \end{align} 
in the sense that the almost-sure existence and value of the limit on the left under initial conditions $(v,\varphi,\dot{\varphi})$ with $(\varphi,\dot{\varphi}) \neq (0,0)$ is equivalent to the almost-sure existence and value of the limit on the right under initial conditions $(v,\psi)$.

Since
    $$
    N(t) =  \sum_{\ell = 1}^m \int_0^t \frac{\gamma_\ell}{2} \sin 2 \psi(s)\, dW_\ell(s) 
    $$
is a continuous martingale with quadratic variation $\langle N\rangle_t \le \|\gamma\|^2 t /4$, it follows that 
   \begin{equation} \label{int2}
   \PP^{(v,\psi)}\left( \frac{1}{t}N(t) \to 0 \mbox{ as }t \to \infty \right) = 1
   \end{equation}
for all $(v,\psi) \in  M$.   

\begin{proposition} \label{prop ergodic} Assume that the eigenvalues of $A$ have strictly negative real parts, and that $(A,B)$ is a controllable pair.  Assume also that the coefficients $a \in \R^d$ and $\gamma \in\R^m$ in \eqref{xi} are not both zero.  Then the diffusion $\{(v(t),\psi(t)): t \ge 0\}$ on $M$ has a unique stationary probability measure $m$, say, with smooth density $\rho(v,\psi)$.  Moreover if $F: M \to \R$ is $m$ integrable then
    \begin{equation} \label{ergodic}
    \PP^{(v,\psi)}\left(\frac{1}{t}\int_0^t F(v(s),\psi(s))ds  \to \int_M F\,dm  \mbox{ as }t \to \infty \right) = 1
    \end{equation}
for every $(v,\psi) \in M.$
\end{proposition}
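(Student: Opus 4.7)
The plan is to follow the classical Khas'minskii--Hörmander route for ergodicity of a degenerate diffusion. The key steps, in order, are: (i) produce an invariant probability measure by tightness; (ii) verify Hörmander's parabolic condition so that the transition kernel has a smooth density and $P_t$ is strong Feller; (iii) establish topological irreducibility via the Stroock--Varadhan support theorem; (iv) conclude uniqueness of $m$ via Doob's theorem; and (v) upgrade Birkhoff's ergodic theorem from $m$-almost every initial condition to every initial condition using absolute continuity of $P_1((v,\psi), \cdot)$ with respect to $m$.

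Step (i) is immediate: since $A$ is Hurwitz, the $v$-marginal converges exponentially fast to its Gaussian stationary distribution $\mu$ with covariance \eqref{R}, while $\psi \in \R/(2\pi\Z)$ is compact, so the joint law is tight and a Krylov--Bogoliubov argument supplies at least one invariant $m$. For step (ii) I would pass to the Stratonovich form of (\ref{v},\ref{psi}) and read off the vector fields $X_0 = Av \cdot \partial_v + h(v, \psi) \partial_\psi$ and $X_\ell = Be_\ell \cdot \partial_v + \gamma_\ell \cos^2\psi \cdot \partial_\psi$ for $\ell = 1, \ldots, m$. The iterated brackets $(\mathrm{ad}\,X_0)^k X_\ell$ have $v$-components $(-A)^k Be_\ell$, so the controllability hypothesis \eqref{CP} delivers the full $v$-direction of the Lie algebra at every point. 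For the $\psi$-direction the assumption $(a, \gamma) \neq 0$ is essential: if $\gamma \neq 0$ then $X_\ell$ itself contributes wherever $\cos\psi \neq 0$; if $\gamma = 0$ but $a \neq 0$ then $[X_0, X_\ell]$ has $\psi$-component $-\langle a, Be_\ell\rangle \cos^2\psi$, and controllability forces at least one $\langle a, A^j Be_\ell\rangle$ to be nonzero, again handling $\{\cos \psi \neq 0\}$.

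For step (iii), the deterministic control system obtained by replacing each $dW_\ell$ with a smooth control can steer $v$ freely by \eqref{CP}, and either direct control on $\psi$ (when $\gamma \neq 0$) or the dependence $\partial_v h = a \cos^2\psi$ (when $\gamma = 0$, $a \neq 0$) lets one steer $\psi$ into any open set, so the Stroock--Varadhan support theorem gives $P_t((v_0,\psi_0), U) > 0$ for every nonempty open $U \subset M$ and some $t > 0$. Steps (iv) and (v) are then standard: Doob's theorem applied to strong Feller plus topological irreducibility forces uniqueness of $m$; Birkhoff gives the convergence in \eqref{ergodic} for $m$-a.e.\ starting point; and since the $\{0, 1\}$-valued indicator of the Birkhoff limit event is $P_t$-invariant and equals $1$ for $m$-a.e.\ $(v, \psi)$, smoothness and positivity of $p_1$ force it to equal $1$ everywhere. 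The main obstacle I anticipate is the Hörmander verification at the exceptional locus $\{\cos \psi = 0\}$: every one-step bracket carries a factor of $\cos \psi$ in its $\psi$-component, so one must iterate $\mathrm{ad}\,X_0$ at least twice more and exploit $\partial_\psi^2 \cos^2\psi = -2 \cos 2\psi \neq 0$ at $\psi = \pm\pi/2$, together with the transversality $h(v, \pm \pi/2) = -1$ of the drift, to extract a nonvanishing $\psi$-component. This is a careful bookkeeping rather than a conceptual difficulty.
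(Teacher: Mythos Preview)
Your proposal is correct and follows essentially the same route as the paper: verify H\"ormander's bracket condition (with extra care at $\psi=\pm\pi/2$), use controllability plus the Stroock--Varadhan support theorem for irreducibility, and upgrade Birkhoff from $m$-a.e.\ to every starting point via absolute continuity of $P_1(x,\cdot)$ with respect to $m$. The only notable differences are in the details: for the $\psi$-direction at $\psi=\pm\pi/2$ the paper exhibits the explicit bracket $[{\cal M}^2(V),{\cal M}(V)]$ (with $\psi$-component $4\alpha^2$ there) rather than leaving it as bookkeeping, and for irreducibility it proves exact point-to-point controllability in any time $t>0$ through a direct ODE analysis of the $\psi$-equation, rather than invoking Doob's theorem from strong Feller plus open-set accessibility.
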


The proof of Proposition \ref{prop ergodic} is given in Section \ref{sec ergodic}.
   The $v$ marginal of $m$ is the stationary probability measure $\mu$ on $\R^d$ which is mean-zero Gaussian with mean zero and covariance matrix $R$ given by \eqref{R}.  There exist constants $k_0$ and $k_1$ such that $|Q(v,\psi)| \le k_0 + k_1\|v\|$ and so 
   $$
   \int_M |Q(v,\psi)|dm(v,\psi) \le \int_{\R^d}\bigl(k_0+k_1\|v\|\bigr)d\mu(v) <\infty.
   $$
By \eqref{int} and \eqref{int2} and using Proposition \ref{prop ergodic} we obtain

\begin{corollary} \label{cor ergodic}  (i) Under the assumptions of Proposition \ref{prop ergodic} we have 
   \begin{equation} \label{ergodic2}
    \PP^{(v,\varphi,\dot{\varphi})}\left(\frac{1}{t} \log \|(\varphi(t),\dot{\varphi}(t))\|  \to \int_M Q\, dm  \mbox{ as }t \to \infty \right) = 1
    \end{equation}
for every $(v,\varphi,\dot{\varphi})$ with $(\varphi,\dot{\varphi}) \neq (0,0)$.

(ii) The Lyapunov exponent $\lambda$ defined in \eqref{lam gen} exists as an almost-sure limit for all initial conditions $(v,\varphi,\dot{\varphi})$ with $(\varphi,\dot{\varphi}) \neq (0,0)$ and 
    \begin{equation} \label{lam Khas}
    \lambda = \int_M Q(v,\psi)\,dm(v,\psi).
    \end{equation}
 \end{corollary}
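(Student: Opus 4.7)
The plan is to assemble the corollary from three ingredients already laid out in the text: the polar-coordinate decomposition \eqref{int}, the negligibility of the martingale piece \eqref{int2}, and the Birkhoff-type ergodic statement \eqref{ergodic} of Proposition \ref{prop ergodic}. Part (ii) is just a restatement of (i) once we observe that $\sqrt{\varphi^2+\dot\varphi^2} = \|(\varphi,\dot\varphi)\|$, so I focus on proving (i).

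First I would check that the polar decomposition used to derive \eqref{int} is valid for all $t \ge 0$: since \eqref{uu xi} is a linear SDE, pathwise uniqueness implies that a trajectory starting from $u(0) \neq 0$ never visits the origin, so $\log\|u(t)\|$ and $\psi(t)$ are well-defined continuous semimartingales and the identity displayed in \eqref{int} holds pathwise. Any initial condition $(v,\varphi,\dot\varphi)$ with $(\varphi,\dot\varphi) \neq (0,0)$ determines a unique $(v,\psi)$ via $\psi = \arg(\varphi + i\dot\varphi)$, and the conclusion of Proposition \ref{prop ergodic} then applies under that initial condition.

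Next I would handle the two terms on the right side of \eqref{int} separately. The martingale contribution $N(t)/t$ vanishes almost surely by \eqref{int2}, which follows from the quadratic-variation bound $\langle N\rangle_t \le \|\gamma\|^2 t/4$ together with the strong law for continuous martingales of at-most-linear growth (e.g.\ via the time change $N(t) = \widetilde W(\langle N\rangle_t)$ for a standard Brownian motion $\widetilde W$ and the fact that $\widetilde W(s)/s \to 0$). For the integral of $Q$, I would invoke Proposition \ref{prop ergodic}; this requires verifying that $Q$ is $m$-integrable. The explicit formula \eqref{Q} gives constants $k_0,k_1$ with $|Q(v,\psi)| \le k_0 + k_1\|v\|$, and since the $v$-marginal of $m$ is the Gaussian measure $\mu$ with covariance $R$, one has $\int_M |Q|\,dm \le k_0 + k_1\int_{\R^d}\|v\|\,d\mu(v) < \infty$. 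Hence $t^{-1}\int_0^t Q(v(s),\psi(s))\,ds \to \int_M Q\,dm$ almost surely from every starting point.

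Combining the two convergences on the right side of \eqref{int} yields $t^{-1}\log\|(\varphi(t),\dot\varphi(t))\| \to \int_M Q\,dm$ almost surely, which is \eqref{ergodic2}. There is no real obstacle here: all the substantive work has been done in preparing \eqref{int}, \eqref{int2}, the polynomial bound on $Q$, and Proposition \ref{prop ergodic} itself. The only mild subtlety is the lifting of the almost-sure statement from initial conditions $(v,\psi)$ (for which Proposition \ref{prop ergodic} is stated) to initial conditions $(v,\varphi,\dot\varphi)$ with $(\varphi,\dot\varphi) \neq (0,0)$, and this is handled by the polar change of variable just described.
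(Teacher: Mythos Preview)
Your proposal is correct and follows exactly the route the paper takes: the corollary is obtained directly from \eqref{int}, \eqref{int2}, the linear bound $|Q(v,\psi)|\le k_0+k_1\|v\|$ giving $m$-integrability of $Q$, and Proposition~\ref{prop ergodic}. The additional details you supply (nonvanishing of $u(t)$, the correspondence between $(v,\varphi,\dot\varphi)$ and $(v,\psi)$, and the time-change justification of \eqref{int2}) are reasonable elaborations of steps the paper leaves implicit.
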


\subsection{Evaluation} Direct evaluation of \eqref{lam Khas} is hard, because we need to first solve ${\cal A}^\ast m = 0$.  There is a simple case when $a = 0$, because then $v(t)$ disappears from \eqref{uu xi} and problem reduces to a constant coefficient linear SDE in $\R^2$.  In this case the exact formula of Imkeller and Lederer \cite{IL99} will apply.  But this excludes the original block-pendulum model.  The block-pendulum case is interesting because of the different sorts of noise in \eqref{phi eta}:  white noise $\dot{W}(t)$ as well as colored noise $\eta(t)$ and $\dot{\eta}(t)$.

Before proceeding to the small noise estimates, we give a simple upper bound for the Lyapunov exponent $\lambda$.

\begin{proposition} \label{prop lam upper}  Assume $\zeta_2 < \kappa$ and let $\kappa_d = \sqrt{\kappa^2 - \zeta_2^2}$ denote the damped frequency of the pendulum.  Then
  \begin{equation} \label{lam upper xi}
    \lambda \le - \zeta_2 + \frac{\sqrt{\langle a, Ra \rangle}}{2 \kappa_d} + \frac{\|\gamma\|^2}{2 \kappa_d^2},
  \end{equation}
where $R$ is the covariance matrix \eqref{R}.  For the original block and pendulum setting
   \begin{equation} \label{lam upper eta}
    \lambda \le - \zeta_2 + \frac{\nu}{2 \kappa_d}\sqrt{\frac{\chi^2+4 \zeta_1^2}{4 \zeta_1}} + \frac{\nu^2}{2 \kappa_d^2}.
  \end{equation}
\end{proposition}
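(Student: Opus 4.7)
The plan is to eliminate the pendulum damping by an exponential substitution and then estimate the resulting growth rate by a scaled polar-coordinate Ito calculation. Set $\varphi(t) = e^{-\zeta_2 t}\tilde\varphi(t)$; a short calculation shows that \eqref{phi xi} transforms into
\[
\ddot{\tilde\varphi}(t) + \bigl(\kappa_d^2 - \langle a,v(t)\rangle\bigr)\tilde\varphi(t) = \sum_{\ell=1}^m\gamma_\ell\tilde\varphi(t)\dot W_\ell(t),
\]
which has the same structural form as \eqref{phi xi} but with $\zeta_2$ replaced by $0$ and $\kappa$ replaced by $\kappa_d$. Since the map $(\tilde\varphi(t),\dot{\tilde\varphi}(t))\mapsto(\varphi(t),\dot\varphi(t))$ is $e^{-\zeta_2 t}$ composed with a fixed invertible linear map on $\R^2$, the Lyapunov exponent of the original process equals $-\zeta_2+\tilde\lambda$, where $\tilde\lambda$ is the Lyapunov exponent of $(\tilde\varphi,\dot{\tilde\varphi})$.

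For $\tilde\lambda$ I would introduce the $\kappa_d$-scaled polar coordinates $\tilde\varphi = (r/\kappa_d)\cos\tilde\theta$ and $\dot{\tilde\varphi} = -r\sin\tilde\theta$, so that $r^2 = \kappa_d^2\tilde\varphi^2+\dot{\tilde\varphi}^2$ and the unforced dynamics rotate $\tilde\theta$ at uniform angular speed $\kappa_d$. Ito's formula then yields
\[
d\log r = \left[-\frac{\langle a,v\rangle}{2\kappa_d}\sin 2\tilde\theta + \frac{\|\gamma\|^2}{2\kappa_d^2}\cos^2\tilde\theta - \frac{\|\gamma\|^2}{4\kappa_d^2}\sin^2 2\tilde\theta\right]dt + dM_t,
\]
with $M$ a martingale of uniformly bounded instantaneous variance. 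Since the transformed SDE has the same structural form as \eqref{phi xi}, Proposition \ref{prop ergodic} applies to the $(v,\tilde\theta)$-diffusion, giving a unique invariant probability $\tilde m$ whose $v$-marginal is $\mu$; Corollary \ref{cor ergodic}, in its transformed version, then identifies $\tilde\lambda$ with the $\tilde m$-average of the bracketed drift. Dropping the nonpositive $\sin^2 2\tilde\theta$ term and using $|\sin 2\tilde\theta|\le 1$ together with $\cos^2\tilde\theta\le 1$ gives
\[
\tilde\lambda \le \frac{1}{2\kappa_d}\int_{\R^d}|\langle a,v\rangle|\,d\mu(v) + \frac{\|\gamma\|^2}{2\kappa_d^2};
\]
and since $\langle a,v\rangle$ is mean-zero Gaussian under $\mu$ with variance $\langle a,Ra\rangle$, Cauchy--Schwarz bounds the integral by $\sqrt{\langle a,Ra\rangle}$, establishing \eqref{lam upper xi}.

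For the block-and-pendulum specialization, rewriting \eqref{eta} as $\ddot\eta(t) = -\chi^2\eta(t) - 2\zeta_1\dot\eta(t) + \nu\dot W(t)$ identifies $a=(-\chi^2,-2\zeta_1)^{\top}$ and $\gamma=\nu$, and a direct computation with the explicit stationary covariance $R$ given after \eqref{bareta} yields $\langle a,Ra\rangle = \nu^2(\chi^2+4\zeta_1^2)/(4\zeta_1)$; substituting into \eqref{lam upper xi} delivers \eqref{lam upper eta}. The conceptually decisive step is the exponential substitution, which exactly absorbs the $-\zeta_2$ while preserving the structure of the SDE; once this is in place the rest is routine Ito calculus and ergodic averaging. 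The only point requiring brief justification is the applicability of the Khas'minskii framework to the transformed equation, but this is immediate because that equation falls within the hypotheses of Proposition \ref{prop ergodic}.
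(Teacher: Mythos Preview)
Your argument is correct and is essentially the paper's own proof. The paper reaches the same scaled-polar drift by the static linear change $\tilde u = (\kappa_d\varphi,\ \zeta_2\varphi+\dot\varphi)$ rather than your time-dependent substitution $\varphi = e^{-\zeta_2 t}\tilde\varphi$, but since $(\kappa_d\tilde\varphi,\dot{\tilde\varphi}) = e^{\zeta_2 t}\tilde u$ these are the same computation; the resulting pointwise bound on the drift, the Cauchy--Schwarz step, and the block--pendulum specialization are identical.
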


The proof of Proposition \ref{prop lam upper} uses a slightly different version of the function $Q(v,\psi)$ and is given in Section \ref{sec Khas rev}.  The result is motivated by an estimate in Ariaratnam \cite{Aria91}.

\section{Small forcing} \label{sec scaling}  
 
In this section we consider the effect of small forcing of the block. Precisely, we replace
   $
   \nu \rightsquigarrow \eps \nu 
   $
for small $\eps$.  The linearized system (\ref{eta},\ref{phi eta}) becomes
  \begin{align*}
    \ddot{\eta}(t) +2 \zeta_1 \dot{\eta}(t)  + \chi^2 \eta(t) &  = \eps \nu \dot{W}(t),\\
    \ddot{\varphi}(t)   + 2 \zeta_2 \dot{\varphi}(t)   + \big(\kappa^2 +2 \zeta_1 \dot{\eta}(t)   + \chi^2 \eta(t)\big) \varphi(t)  &   = \eps \nu \varphi(t)   \dot{W}(t).   
 \end{align*}
Equivalently, writing $\eta(t) = \eps \tilde{\eta}(t)$ and then dropping the tilde
   \begin{align}
     \ddot{\eta}(t) +2 \zeta_1 \dot{\eta}(t)  + \chi^2 \eta(t) &  = \nu \dot{W}(t) \label{eta eps},\\      \ddot{\varphi}(t)   + 2 \zeta_2 \dot{\varphi}(t)   + \big(\kappa^2 +2 \eps \zeta_1 \dot{\eta}(t)   + \eps\chi^2 \eta(t)\big) \varphi(t)  &   = \eps \nu \varphi(t)   \dot{W}(t). \label{phi eta eps}
  \end{align}
For the more general problem in Section \ref{sec more gen} we replace
  $
   \xi(t) \rightsquigarrow \eps \xi(t) $ 
in \eqref{phi xi2}, that is $a \rightsquigarrow  \eps a$ and $\gamma \rightsquigarrow  \eps \gamma$, giving the equation 
   \begin{equation}
    \ddot{\varphi}(t)   + 2 \zeta_2 \dot{\varphi}(t)   + \big(\kappa^2 - \eps \langle a,v(t) \rangle \big) \varphi(t)     = \sum_{\ell = 1}^m \eps \gamma_\ell \varphi(t)   \dot{W}_\ell(t)  \label{phi xi eps}
 \end{equation}
   where $\{v(t): t \ge 0\}$ is still given by \eqref{v}. 
 We now define the Lyapunov exponent
\begin{equation}  \label{lam eps}
\lambda(\eps) :=\lim \limits_{t\to\infty}\frac{1}{t}\log\|\left(\varphi(t), \dot\varphi(t) \right)\|.
\end{equation}
where $\{\varphi(t): t \ge 0\}$ is given by \eqref{phi xi eps}.

\s

Define the matrix valued cosine transform of $A$ by 
   \begin{equation} \label{hatS}
   \widehat{S}_A(\omega) = \frac{1}{\pi}\int_0^\infty e^{tA} \cos \omega t \,dt,
   \end{equation}
and recall that $R$ denotes the covariance matrix of the stationary probability measure $\mu$ for $\{v(t): t \ge 0\}$.

\begin{theorem}  \label{thm lam}  (i) Assume that the eigenvalues of $A$ have strictly negative real parts, and that $(A,B)$ is a controllable pair.  Assume $\zeta_2 < \kappa$ and let $\kappa_d = \sqrt{\kappa^2 - \zeta_2^2}$ denote the damped frequency of the pendulum.  For $\{\varphi(t): t \ge 0\}$ given by \eqref{phi xi eps} we have 
    \begin{equation}  \label{lam eps}
    \lambda(\eps) = -\zeta_2 + \eps^2\lambda_2(2\kappa_d) + O(\eps^4)\quad \mbox{ as } \eps \to 0.
     \end{equation}
where
    \begin{equation} \label{lam2 omega}
    \lambda_2(\omega) =\frac{\pi}{\omega^2} \Bigl( \langle a, \widehat{S}_A(\omega)Ra\rangle + \langle a, \widehat{S}_A(\omega)B\gamma \rangle + \frac{\|\gamma\|^2}{2\pi } \Bigr).
    \end{equation}
    
(ii) Moreover, given matrices $A$ and $B$ and vectors $a$ and $\gamma$ the asymptotic \eqref{lam eps} is uniform for $\kappa_d$ bounded away from 0 and $\infty$.  That is, given $A$, $B$, $a$, $\gamma$ and $0 < c_1 < c_2 <\infty$ there exists $K$ such that
     \begin{equation}  \label{lam eps unif}
   \Bigl| \lambda(\eps) +\zeta_2 - \eps^2\lambda_2(2\kappa_d)\Bigr| \le K\eps^4     
     \end{equation} 
whenever $0 < \eps \le 1$ and $c_1 \le \kappa_d \le c_2$. 
     \end{theorem}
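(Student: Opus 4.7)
The plan is to start from the Khasminskii representation $\lambda(\eps) = \int_M Q_\eps\, dm_\eps$ in Corollary~\ref{cor ergodic} and build a regular perturbation expansion in $\eps$ of both the invariant probability $m_\eps$ and the integrand $Q_\eps$. As preparation I would replace the angle $\psi$ on $\mathbb{S}^1$ by a rescaled angle $\theta$ in which the unperturbed flow is uniform rotation at rate $\kappa_d$; this is achieved by first passing to the variables $(\varphi, (\dot\varphi + \zeta_2\varphi)/\kappa_d)$ and then taking polar coordinates, after which the $\eps = 0$ angular dynamics reduces to $\dot\theta = \kappa_d$. In these coordinates the joint generator of $(v, \theta)$ splits as ${\cal A}_\eps = {\cal A}_0 + \eps{\cal A}_1 + \eps^2{\cal A}_2$, where ${\cal A}_0$ is the product of the Ornstein--Uhlenbeck generator ${\cal L}_v$ and a rigid rotation, ${\cal A}_1$ carries both the parametric drift from $\langle a, v\rangle$ and the $v$--$\theta$ cross-diffusion arising from the common Brownian factor $B\gamma$, and ${\cal A}_2$ is the pure white-noise diffusion in $\theta$.

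Writing $m_\eps = m_0 + \eps m_1 + \eps^2 m_2 + O(\eps^3)$ with $m_0 = \mu\otimes d\theta/(2\pi)$, the adjoint cascade ${\cal A}_0^\ast m_k = -{\cal A}_1^\ast m_{k-1} - {\cal A}_2^\ast m_{k-2}$ determines the correctors; they are well defined because ${\cal A}_0$ has a spectral gap on the zero-mean subspace, with controllability of $(A, B)$ supplying the $v$ gap and ergodicity of the rotation supplying the $\theta$ gap. Expanding $\int Q_\eps\, dm_\eps$ in $\eps$, the constant term evaluates by elementary Gaussian and trigonometric averaging to $\lambda_0 = -\zeta_2$. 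The $\eps$-coefficient vanishes: $\int Q_1\, dm_0 = 0$ because $\mu$ is mean-zero Gaussian, and $\int Q_0\, dm_1 = 0$ because the $v$-marginal of $m_1$ is itself zero, as one sees by integrating ${\cal A}_0^\ast m_1 = -{\cal A}_1^\ast m_0$ over $v$.

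At order $\eps^2$ the contribution splits as $\int Q_2\, dm_0 + \int Q_1\, dm_1$. The first integral averages $\cos^2\psi\cos 2\psi$ against the $\theta$-marginal of $m_0$ and produces the pure white-noise piece of $\lambda_2(2\kappa_d)$, corresponding to the $\|\gamma\|^2/(2\pi)$ term in \eqref{lam2 omega}. The second I would rewrite by duality as $-\int Q_1 f_1\, dm_0$ where $f_1$ solves the Poisson equation ${\cal A}_0 f_1 = Q_1 - \overline{Q_1}$; because ${\cal A}_0$ is a product of two ergodic generators with spectral gap, $f_1 = -\int_0^\infty e^{t{\cal A}_0}(Q_1 - \overline{Q_1})\, dt$ factorizes cleanly. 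The $\theta$-factor produces oscillatory weights $\cos 2\kappa_d t$ and $\sin 2\kappa_d t$, while the $v$-factor produces the Gaussian autocovariance $\langle a, e^{tA} R a\rangle$ together with the cross-covariance $\langle a, e^{tA} B\gamma\rangle$ between the OU process and its driving Brownian motion; integration in $t$ then reproduces the matrix cosine transform $\widehat{S}_A(2\kappa_d)$ applied to $Ra$ and to $B\gamma$, matching the remaining two pieces of \eqref{lam2 omega}.

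The main technical obstacle is to upgrade this formal expansion into a rigorous estimate with an $O(\eps^4)$ remainder uniform for $\kappa_d \in [c_1, c_2]$. This requires quantitative regularity and decay bounds on $m_1, m_2$ and on the correctors $f_1, f_2$ in Gaussian-weighted spaces, together with resolvent estimates for ${\cal A}_0$ obtained from the OU semigroup tensored with Fourier decomposition in $\theta$. The sharpening from $O(\eps^3)$ to $O(\eps^4)$ follows from the same oddness argument applied at higher orders: the $\eps^3$-coefficient vanishes because the odd-order correctors pair with the trigonometric parts of $Q_\eps$ to give zero. Uniformity in $\kappa_d$ on the compact interval $[c_1, c_2]$ then reduces to bounding the spectral gap of ${\cal A}_0$ and the operator norm of $\widehat{S}_A(2\kappa_d)$, both of which stay bounded thanks to controllability of $(A, B)$ and the closed-form expression $\widehat{S}_A(\omega) = -(1/\pi) A(A^2 + \omega^2 I)^{-1}$.
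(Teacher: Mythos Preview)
Your overall plan---Khasminskii formula, the coordinate change that makes the $\eps=0$ angular motion a uniform rotation at rate $\kappa_d$, then perturbation in $\eps$---matches the paper's, and your identification of the three pieces of $\lambda_2(2\kappa_d)$ is correct. But there is a genuine gap in the argument as written, and your route to the rigorous remainder estimate differs from the paper's in a way that matters.

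\textbf{The spectral-gap claim is false.} You assert that the correctors are well defined because ${\cal A}_0$ has a spectral gap on the zero-mean subspace, ``with ergodicity of the rotation supplying the $\theta$ gap.'' The rotation generator $-\kappa_d\,\partial_\psi$ has purely imaginary spectrum $\{-in\kappa_d:n\in\Z\}$ and its semigroup is unitary; there is no gap. Consequently the formula $f=-\int_0^\infty e^{t{\cal A}_0}(G-\overline G)\,dt$ does \emph{not} converge in general: on any Fourier--Hermite mode that is constant in $v$ and nontrivial in $\psi$ the integrand is proportional to $e^{-in\kappa_d t}$, which does not decay. The paper makes exactly this point in the Remark following \eqref{adj3}. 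Your integral for $f_1$ happens to converge only because $Q_1$ is purely linear in $v$ and the Ornstein--Uhlenbeck factor supplies the decay; at the next stage the source $-{\cal A}_1F_1+Q_2$ contains $v$-independent harmonics (indeed $Q_2$ is independent of $v$, and the mixed derivative in ${\cal A}_1$ applied to the $v$-linear $F_1$ produces a $v$-constant piece), and the semigroup formula diverges there. The Poisson equations \emph{are} solvable---${\cal A}_0$ is invertible on zero-mean functions since the eigenvalues $-\mu_k-in\kappa_d$ with $(k,n)\neq(0,0)$ never vanish---but not via the time integral, and ``spectral gap'' is the wrong mechanism to invoke. (A smaller slip: the duality step should give $\int Q_1\,dm_1=-\int({\cal A}_1 f_1)\,dm_0$, not $-\int Q_1 f_1\,dm_0$.)

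\textbf{The paper takes the dual route and avoids expanding $m_\eps$ altogether.} Rather than expanding the invariant measure, the paper expands a \emph{test function} $F^\eps=\eps F_1+\eps^2F_2+\eps^3F_3$ with ${\cal A}^\eps F^\eps\approx Q^\eps-\lambda(\eps)$ (the adjoint method of Arnold--Papanicolaou--Wihstutz). The correctors $F_k$ are built \emph{algebraically} inside finite-dimensional spaces of functions of the form $e^{in\psi}\times(\mbox{multilinear polynomial in }v)$ (Lemmas~\ref{lem CtoDE} and~\ref{lem CtoDE3}); the constant-in-$v$ harmonics are handled by the elementary antiderivative $e^{in\psi}\mapsto (i/n\kappa_d)e^{in\psi}$ rather than by any semigroup integral. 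One then integrates the exact identity \eqref{proof1} against the \emph{true} invariant measure $m^\eps$, using Proposition~\ref{prop BG} to justify $\int_M{\cal A}^\eps(\,\cdot\,)\,dm^\eps=0$ on the noncompact space. The key simplification is that the $v$-marginal of $m^\eps$ is the fixed Gaussian $\mu$ for every $\eps$, so the remainder integrals $\int{\cal A}_1F_3\,dm^\eps$, $\int{\cal A}_2F_2\,dm^\eps$, $\int{\cal A}_2F_3\,dm^\eps$ are bounded by moments of $\mu$, uniformly in $\eps$ and---because the coefficients of the $F_k$ depend continuously on $\kappa_d$---uniformly for $c_1\le\kappa_d\le c_2$. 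Your measure-expansion programme would instead require weighted-space control of the density correctors $m_1,m_2$ and of the difference $m_\eps-(m_0+\eps m_1+\eps^2m_2)$, which is substantially more delicate and is simply not needed here.
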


The proof for Theorem \ref{thm lam} is given in Section \ref{sec lamproof}.

\s

Observe that the destabilizing effect of the noise is strongest when the damped frequency $\kappa_d$ maximises the function $\lambda_2(2\omega)$.  We will expand on this observation in Section \ref{sec scaling2}.  

\subsection{Evaluation of $\lambda_2(\omega)$}

In the special case when $a = 0$ we have $\lambda_2(\omega) = \|\gamma\|^2/(2 \omega^2)$ and we recover the result of Auslender and Milstein \cite{AM82} that
     $$
     \lambda(\eps) = -\zeta_2+  \frac{\eps^2\|\gamma\|^2}{8 \kappa_d^2} + O(\eps^4) \quad \mbox{ as } \eps \to 0.
     $$
But when $a \neq 0$ we have to do some work to simplify the formula \eqref{lam2 omega} for $\lambda_2(\omega)$.     
   The covariance matrix $R$ is determined by \eqref{R}.  The cosine transform $\widehat{S}_A(\omega)$ defined in \eqref{hatS} satisfies  
  \begin{equation} \label{hatS form}
   \widehat{S}_A(\omega)  =  \frac{1}{\pi} \Re\Bigl( \int_0^\infty e^{tA} e^{-i\omega t}\,dt\Bigr)
   = - \frac{1}{\pi} \Re\Bigl( (A-i\omega I_d)^{-1}\Bigr).
  \end{equation}
Hence, given the matrices $A$ and $B$ and the vectors $a$ and $\gamma$, all the terms in \eqref{lam2 omega} can be calculated.  

We will give formulas for $\lambda_2(\omega)$ in terms of power spectral density.  Recall that for any $L^2$ stationary mean zero scalar process $X(t)$ the power spectral density function $S_X$ is given by
    \begin{equation} \label{psd}
    S_X(\omega) = \frac{1}{2\pi} \int_{-\infty}^\infty \E[X(t)X(0)] e^{-i \omega t}\,dt = \frac{1}{\pi} \int_0^\infty \E[X(t)X(0)] \cos \omega t\,dt.
    \end{equation}                
We note that many authors omit the factor $1/2\pi$.

\s

The block and pendulum setting has $\xi(t) = \ddot{\eta}(t)$ where $\dot{\eta}(t)$ is a well-defined $L^2$ stationary process.  The first result extends this case to the general system of Section \ref{sec more gen}.

 
Fix $\alpha \in \R^d$ and define $\xi(t) = \dot{V}(t)$ where $V(t) = \langle \alpha,v(t)\rangle $. 
 Since 
    \begin{equation} \label{Vdot}
     \xi(t) = \langle \alpha,\dot{v}(t) \rangle = \langle \alpha, Av(t) \rangle + \langle \alpha, B\dot{W}(t) \rangle  = \langle A^\ast \alpha, v(t) \rangle + \langle B^\ast \alpha, \dot{W}(t) \rangle,
    \end{equation} 
we have \eqref{xi} with $a = A^\ast \alpha$ and $\gamma = B^\ast \alpha$.   Note that $V(t)$ is a well-defined $L^2$ stationary process, so that it has a well-defined autocovariance function $\E[V(t)V(0)]$ and hence a well-defined power spectral density $S_V(\omega)$ using the formula \eqref{psd}. 

 \begin{proposition}  \label{prop Vdot}  For $\xi$ given by \ref{Vdot} we have  
    \begin{equation}  \label{lam2 Vdot}
     \lambda_2(\omega) = \pi S_V(\omega).    
     \end{equation}
 \end{proposition}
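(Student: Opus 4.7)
The plan is a direct algebraic verification: compute both sides of \eqref{lam2 Vdot} explicitly in terms of $A$, $B$, $R$, and $\alpha$, and show they coincide using the Lyapunov equation for $R$ and the resolvent identity for $\widehat{S}_A(\omega)$.

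Since $v$ is the stationary OU process, $\E[v(t)v(0)^\ast] = e^{tA}R$ for $t \ge 0$, and so the autocovariance of $V(t) = \langle\alpha, v(t)\rangle$ is $\E[V(t)V(0)] = \alpha^\ast e^{tA}R\alpha$; inserting into \eqref{psd} and recognizing \eqref{hatS} gives $S_V(\omega) = \alpha^\ast\widehat{S}_A(\omega) R\alpha$. For the other side, substitute $a = A^\ast\alpha$ and $\gamma = B^\ast\alpha$ (from \eqref{Vdot}) into \eqref{lam2 omega} to obtain
\[
\omega^2\lambda_2(\omega)/\pi = \alpha^\ast A \widehat{S}_A(\omega) R A^\ast\alpha + \alpha^\ast A \widehat{S}_A(\omega) B B^\ast\alpha + \tfrac{1}{2\pi}\alpha^\ast BB^\ast\alpha.
\]
The stationary covariance satisfies the Lyapunov equation $AR + RA^\ast + BB^\ast = 0$ (derivable by differentiating $e^{tA}BB^\ast e^{tA^\ast}$ and integrating over $t \ge 0$). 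Replacing $BB^\ast$ by $-AR - RA^\ast$ in the middle and last terms and cancelling the $\alpha^\ast A\widehat{S}_A(\omega) RA^\ast\alpha$ piece leaves
\[
\omega^2\lambda_2(\omega) = -\pi\alpha^\ast A\widehat{S}_A(\omega) AR\alpha - \tfrac{1}{2}\alpha^\ast(AR + RA^\ast)\alpha.
\]

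To finish, I would derive $\pi\widehat{S}_A(\omega) = -A(A^2 + \omega^2 I)^{-1}$ from \eqref{hatS form} via $(A-i\omega I)^{-1} + (A+i\omega I)^{-1} = 2A(A^2+\omega^2 I)^{-1}$. Since $A$ commutes with $(A^2+\omega^2 I)^{-1}$ this yields the operator identity $\omega^2 \pi\widehat{S}_A(\omega) + \pi A\widehat{S}_A(\omega) A = -A$; sandwiching between $\alpha^\ast$ and $R\alpha$ gives $\omega^2\pi S_V(\omega) = -\pi\alpha^\ast A\widehat{S}_A(\omega) AR\alpha - \alpha^\ast AR\alpha$. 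Matching against the previous display reduces the claim to the scalar equality $\alpha^\ast AR\alpha = \alpha^\ast RA^\ast\alpha$, which holds by transposition since $R = R^\ast$. The argument is routine matrix algebra, with no single hard step; the only bookkeeping worth calling out is that one must apply the Lyapunov equation simultaneously to the cross term $\langle a,\widehat{S}_A(\omega) B\gamma\rangle$ and the white-noise term $\|\gamma\|^2/(2\pi)$, both of which carry $BB^\ast$, so that the factor $\omega^2$ in the denominator cancels cleanly via the resolvent identity $A^2(A^2+\omega^2 I)^{-1} + \omega^2(A^2+\omega^2 I)^{-1} = I$.
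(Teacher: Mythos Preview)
Your argument is correct and is essentially the same as the paper's: both substitute $a=A^\ast\alpha$, $\gamma=B^\ast\alpha$ into \eqref{lam2 omega}, eliminate $BB^\ast$ via the Lyapunov equation $AR+RA^\ast=-BB^\ast$, and then use the identity $\pi A\widehat{S}_A(\omega)A+\omega^2\pi\widehat{S}_A(\omega)=-A$ to reduce to $\pi\langle\alpha,\widehat{S}_A(\omega)R\alpha\rangle=\pi S_V(\omega)$. The only cosmetic difference is that the paper obtains this operator identity by integrating $\int_0^\infty Ae^{tA}A\cos\omega t\,dt$ by parts twice, whereas you obtain it from the resolvent formula $\pi\widehat{S}_A(\omega)=-A(A^2+\omega^2 I)^{-1}$; and the paper collapses $\alpha^\ast(AR+RA^\ast)\alpha$ to $2\alpha^\ast AR\alpha$ at the start rather than at the end.
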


Specializing to the original block and pendulum case, we have $\xi(t) = \ddot{\eta}(t) = \dot{v_2}(t)$ where  $v(t) = \begin{pmatrix}\eta(t) \\ \dot{\eta}(t) \end{pmatrix}$ as in \eqref{v eta}.  Therefore we may apply Proposition \ref{prop Vdot} with $V(t) = v_2(t) = \dot{\eta}(t)$.  For the system \eqref{v eta} the power spectral density $S_{\dot{\eta}}(w)$ is well known, see for example \cite[Sect.4.4]{Gard}. 
 \begin{corollary}  \label{cor block}  For $\xi =\ddot{\eta}$ given by \ref{eta} we have 
    \begin{equation}  \label{lam2 block}
     \lambda_2(\omega) = \pi S_{\dot{\eta}}(\omega)  = \frac{\omega^2 \nu^2}{2[(\chi^2-\omega)^2+4 \zeta_1^2 \omega^2]}.    
     \end{equation}
 \end{corollary}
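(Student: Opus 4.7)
The plan is to apply Proposition \ref{prop Vdot} with $V(t) = \dot{\eta}(t)$ and then perform a standard power spectral density computation for the damped harmonic oscillator driven by white noise.

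First, the representation \eqref{v eta} exhibits $v(t) = (\eta(t), \dot{\eta}(t))^\top$ as a 2-dimensional Ornstein--Uhlenbeck process with matrix $A = \bigl(\begin{smallmatrix} 0 & 1 \\ -\chi^2 & -2\zeta_1\end{smallmatrix}\bigr)$ and $B = (0,\nu)^\top$. For $\zeta_1, \chi > 0$ the eigenvalues of $A$ have strictly negative real part, and $(A,B)$ is trivially a controllable pair, so the hypotheses of Proposition \ref{prop Vdot} are satisfied. Writing $\xi(t) = \ddot\eta(t) = \dot v_2(t) = \dot V(t)$ with $V(t) = \langle \alpha, v(t)\rangle$ for $\alpha = (0,1)^\top$, Proposition \ref{prop Vdot} immediately yields $\lambda_2(\omega) = \pi S_{\dot\eta}(\omega)$, reducing the problem to the evaluation of this one power spectral density.

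The cleanest route for that evaluation is via the transfer function: formal Fourier transformation of \eqref{eta} identifies $H(\omega) = (\chi^2 - \omega^2 + 2 i \zeta_1 \omega)^{-1}$ as the transfer from $\nu \dot{W}$ to $\eta$. Under the $1/(2\pi)$-normalization fixed in \eqref{psd} the white noise $\dot{W}$ has constant PSD $1/(2\pi)$, so $S_\eta(\omega) = \nu^2 |H(\omega)|^2/(2\pi)$ and $S_{\dot\eta}(\omega) = \omega^2 S_\eta(\omega)$. Multiplying by $\pi$ yields the formula claimed in the corollary (with $(\chi^2-\omega^2)^2$ in the denominator, consistent with the expression for $\lambda(\eps)$ given in \eqref{lam est}).

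There is no genuine obstacle here: the computation is routine and is essentially the content of \cite[Sect.~4.4]{Gard}. The only point requiring attention is the normalization convention, since the $1/(2\pi)$ prefactor in \eqref{psd} differs from some textbook conventions and must be tracked carefully. A derivation avoiding the frequency-domain shortcut would instead compute the autocovariance $\E[\dot\eta(t)\dot\eta(0)]$ directly using the stationary covariance of $v$ together with the matrix exponential $e^{tA}$ (available in closed form for the damped oscillator with $\kappa_d = \sqrt{\chi^2-\zeta_1^2}$, assuming it is real) and then take the cosine transform in \eqref{psd}; this gives the same answer after partial-fraction reduction.
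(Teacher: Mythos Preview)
Your proposal is correct and follows essentially the same route as the paper: the paper simply observes that $\xi(t)=\ddot\eta(t)=\dot v_2(t)$, applies Proposition~\ref{prop Vdot} with $V=\dot\eta$, and then cites \cite[Sect.~4.4]{Gard} for the power spectral density of the damped oscillator, exactly as you do. Your additional verification of the hypotheses and your transfer-function computation of $S_{\dot\eta}$ just make explicit what the paper leaves to the reference; your parenthetical remark that the denominator should read $(\chi^2-\omega^2)^2$ is also correct and is confirmed by the form of \eqref{lam est} and \eqref{lam eps2}.
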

 
\s

In the general case \eqref{xi} when $\xi(t) = \langle a,v(t)\rangle + \langle \gamma, \dot{W}(t)\rangle $ with $\gamma \neq 0$ then $\xi$ is not a well-defined $L^2$ process.  It does not have an autocovariance function, and the formula \eqref{psd} for the power spectral density does not apply.  However we can consider the power spectral density $S_{\xi_\delta}(\omega)$ for a mollified version $\xi_\delta$ of $\xi$. 
 
Suppose $\psi:\R \to [0,\infty)$ is piecewise continuous with support in $[-1,1]$ and \\$\int_{-\infty}^\infty \psi(t)dt = 1$.  Define $\psi_\delta(t) = (1/\delta)\psi(t/\delta)$.  For any continuous function $f$ we have 
    \begin{equation} \label{fd}
  f_\delta(t) := f \ast \psi_\delta(t) =  \int_{-\infty}^\infty f(s)\psi_\delta(t-s) ds = \int_{-\infty}^\infty f(t-\delta u)\psi(u)du \to f(t)
  \end{equation}
as $\delta  \to 0$, so that the functions $\psi_\delta(t-\cdot)$ converge to the Dirac delta distribution at $t$. 
 Then the process $\xi_\delta$ defined informally as $\xi*\psi_\delta$ and precisely by
   \begin{equation} \label{xi delta}
    \xi_\delta(t) = \int_{-\infty}^\infty \langle a,v(s)\rangle \psi_\delta(t-s)ds + \left\langle \gamma, \int_{-\infty}^\infty \psi_\delta(t-s)dW(s) \right\rangle
    \end{equation}
is well-defined $L^2$ process, and it converges in a weak sense to the generalized process $\xi$.

\begin{proposition} \label{prop psd xi}   For $\xi(t) = \langle a,v(t)\rangle + \langle \gamma, \dot{W}(t)\rangle $ and $\xi_\delta(t)$ given by \eqref{xi delta} the limit
       \begin{equation} \label{psd lim}
  \lim_{\delta \to 0} S_{\xi_\delta}(\omega) = \langle a,\widehat{S}_A(\omega) Ra\rangle+ \langle a, \widehat{S}_A(\omega)B\gamma \rangle + \frac{\|\gamma\|^2}{2\pi}
    \end{equation}
exists and does not depend on the choice of $\psi$.  Defining $S_\xi(\omega):= \lim _{\delta \to 0} S_{\xi_\delta}(\omega)$ we have 
   \begin{equation}  \label{lam2 xi}
     \lambda_2(\omega) = \frac{\pi}{\omega^2} S_\xi(\omega).    
     \end{equation}      
  \end{proposition}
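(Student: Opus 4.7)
The plan is to compute $S_{\xi_\delta}(\omega)$ explicitly, show its limit as $\delta\to 0$ matches the right hand side of \eqref{psd lim}, and then read off \eqref{lam2 xi} by comparison with \eqref{lam2 omega}. Throughout I would write $\xi_\delta = X_\delta + Y_\delta$ with
\[
  X_\delta(t) = \int_{-\infty}^\infty \langle a,v(s)\rangle \psi_\delta(t-s)\,ds, \qquad
  Y_\delta(t) = \left\langle \gamma, \int_{-\infty}^\infty \psi_\delta(t-s)\,dW(s)\right\rangle.
\]
Both are stationary $L^2$ processes on the same Brownian filtration. Setting $X(t)=\langle a,v(t)\rangle$ one has $X_\delta = X\ast \psi_\delta$. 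Writing $\tilde\psi_\delta(s)=\psi_\delta(-s)$ and $\phi_\delta = \psi_\delta \ast \tilde\psi_\delta$, a standard calculation gives $\hat\phi_\delta(\omega) = |\hat\psi_\delta(\omega)|^2 = |\hat\psi(\delta\omega)|^2$, which tends to $|\hat\psi(0)|^2 = 1$ as $\delta\to 0$ independently of the choice of $\psi$. This is the key uniformity input.

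Step 1: auto--spectra. From $v(t)=\int_{-\infty}^t e^{(t-r)A}B\,dW(r)$ and the It\^o isometry, the autocovariance of $X$ is $C_X(\tau) = \langle a, e^{\tau A} R a\rangle$ for $\tau\ge 0$; since $X_\delta = X\ast\psi_\delta$, one obtains $C_{X_\delta}=C_X\ast\phi_\delta$ and hence $S_{X_\delta}(\omega) = |\hat\psi(\delta\omega)|^2 \langle a,\widehat S_A(\omega) Ra\rangle$ using the definition \eqref{hatS} of $\widehat S_A$. For $Y_\delta$, a direct It\^o isometry calculation yields $C_{Y_\delta}(\tau)=\|\gamma\|^2 \phi_\delta(\tau)$, so $S_{Y_\delta}(\omega)=|\hat\psi(\delta\omega)|^2\,\|\gamma\|^2/(2\pi)$.

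Step 2: the cross--spectrum, which is the main substantive step. Using $v(s')=\int_{-\infty}^{s'} e^{(s'-r)A}B\,dW(r)$ and It\^o isometry against $Y_\delta$ (which is driven by the same $W$), one finds after the substitution $\sigma=s'-r$ that
\[
  C_{X_\delta,Y_\delta}(\tau) = \int_0^\infty \langle a, e^{\sigma A} B\gamma\rangle\, \phi_\delta(\sigma-\tau)\,d\sigma.
\]
Taking Fourier transforms and using that $\phi_\delta$ is real and even gives $S_{X_\delta,Y_\delta}(\omega) = \tfrac{1}{2\pi}|\hat\psi(\delta\omega)|^2 \int_0^\infty \langle a, e^{\sigma A}B\gamma\rangle e^{-i\omega\sigma}\,d\sigma$. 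Taking twice the real part and invoking \eqref{hatS} once more produces $2\Re S_{X_\delta,Y_\delta}(\omega) = |\hat\psi(\delta\omega)|^2\,\langle a,\widehat S_A(\omega) B\gamma\rangle$.

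Step 3: assemble. Since $\xi_\delta = X_\delta + Y_\delta$ is real stationary, $S_{\xi_\delta}=S_{X_\delta}+S_{Y_\delta}+2\Re S_{X_\delta,Y_\delta}$. Letting $\delta\to 0$ and using $|\hat\psi(\delta\omega)|^2\to 1$ yields \eqref{psd lim}, with the limit manifestly independent of $\psi$. Finally, comparing the right hand side of \eqref{psd lim} with the bracket in \eqref{lam2 omega} gives $\lambda_2(\omega) = (\pi/\omega^2)\,S_\xi(\omega)$, which is \eqref{lam2 xi}. The main obstacle is the cross--term: getting the correct $\langle a,\widehat S_A(\omega)B\gamma\rangle$ requires careful bookkeeping of the causal representation of $v$ together with the It\^o isometry between the $\langle a,v(s')\rangle$ piece of $X_\delta$ and the stochastic integral appearing in $Y_\delta$, and recognizing the one--sided integral $\int_0^\infty e^{\sigma A}\cos\omega\sigma\,d\sigma$ embedded in its real part.
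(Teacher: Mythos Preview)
Your argument is correct and takes a genuinely different route from the paper's. The paper expands $\E[\xi_\delta(t)\xi_\delta(0)]$ directly into four time-domain integrals (colored$\times$colored, two colored$\times$white cross terms, and white$\times$white), and for each one computes $\frac{1}{\pi}\int_0^\infty(\cdot)\cos\omega t\,dt$ in the limit $\delta\to 0$ via explicit dominated-convergence bounds; one of the two cross terms is shown to vanish separately and the other yields $\langle a,\widehat S_A(\omega)B\gamma\rangle$. You instead exploit the linear-filter structure: writing $\phi_\delta=\psi_\delta\ast\tilde\psi_\delta$, mollification multiplies every (cross-)spectral density by the common factor $|\hat\psi(\delta\omega)|^2$, and the whole limit reduces to the elementary fact $|\hat\psi(\delta\omega)|^2\to 1$. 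Your handling of the cross term via the single identity $C_{X_\delta,Y_\delta}(\tau)=\int_0^\infty\langle a,e^{\sigma A}B\gamma\rangle\,\phi_\delta(\sigma-\tau)\,d\sigma$ replaces the paper's two separate $I^{(2)}$, $I^{(3)}$ calculations. Your approach is more conceptual and makes the $\psi$-independence transparent from the outset; the paper's approach is more elementary in that it avoids any Fourier-convolution identities and works entirely with pointwise dominated-convergence estimates. Both arrive at the same three contributions, and \eqref{lam2 xi} then follows identically by comparison with \eqref{lam2 omega}.
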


The proofs of Propositions \ref{prop Vdot} and \ref{prop psd xi} are given in Section \ref{sec lamproof}.

\section{Small forcing and small pendulum damping}   \label{sec scaling2}
 
 In this section we consider the combined effect of small forcing of the block together with small damping of the pendulum.  Precisely, we replace
   $$
   \nu \rightsquigarrow \eps \nu \quad \mbox{ and } \quad \zeta_2 \rightsquigarrow \eps^2 \zeta_2
   $$
for small $\eps$.  The equation for $\varphi$ is now
   \begin{equation}
     \ddot{\varphi}(t)   + 2 \eps^2 \zeta_2 \dot{\varphi}(t)   + \big(\kappa^2 +2 \eps \zeta_1 \dot{\eta}(t)   + \eps \chi^2 \eta(t)\big) \varphi(t)  =    \eps \nu \varphi(t)   \dot{W}(t) \label{phi eta eps2}
  \end{equation}
for the original block and pendulum setting, and 
   \begin{equation}
    \ddot{\varphi}(t)   + 2 \eps^2 \zeta_2 \dot{\varphi}(t)   + \big(\kappa^2 - \eps \langle a,v(t) \rangle \big) \varphi(t)     = \sum_{\ell = 1}^m \eps \gamma_\ell \varphi(t)   \dot{W}_\ell(t)  \label{phi xi eps2}
 \end{equation}
in the general setting.  In order to distinguish this scaling from the previous one in Section \ref{sec scaling} we write
\begin{equation}  \label{lamhat eps}
\widehat{\lambda}(\eps) :=\lim \limits_{t\to\infty}\frac{1}{t}\log\|\left(\varphi(t), \dot\varphi(t) \right)\|.
\end{equation}
for the Lyapunov exponent where $\{\varphi(t): t \ge 0\}$ is given by \eqref{phi xi eps2}.

\begin{theorem}  \label{thm lam2}  Assume that the eigenvalues of $A$ have strictly negative real parts, and that $(A,B)$ is a controllable pair.  For the process $\big\{\big(\varphi(t), \dot{\varphi}(t) \big): t\geq 0  \big\}$ given by \eqref{phi xi eps2} we have 
    \begin{equation}  \label{lamhat eps2}
    \widehat{\lambda}(\eps) = \eps^2\bigl(-\zeta_2 + \lambda_2(2\kappa)\bigr) + O(\eps^4)\quad \mbox{ as } \eps \to 0.
     \end{equation}
where $\lambda_2(\omega)$ is given by \eqref{lam2 omega}.
     \end{theorem}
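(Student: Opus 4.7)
The plan is to deduce Theorem \ref{thm lam2} from Theorem \ref{thm lam}(ii) by viewing the system \eqref{phi xi eps2} as an instance of \eqref{phi xi eps} in which the pendulum damping $\zeta_2$ has been replaced by $\widetilde{\zeta}_2 := \eps^2 \zeta_2$. The two SDEs are literally identical under this substitution, so the Lyapunov exponent $\widehat{\lambda}(\eps)$ of \eqref{phi xi eps2} equals the Lyapunov exponent produced by Theorem \ref{thm lam} for the family of systems with damping $\widetilde{\zeta}_2$ and damped frequency $\widetilde{\kappa}_d := \sqrt{\kappa^2 - \eps^4 \zeta_2^2}$.

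Pick any $0 < c_1 < \kappa < c_2$. For all sufficiently small $\eps > 0$ we have $\widetilde{\kappa}_d \in [c_1,c_2]$, so Theorem \ref{thm lam}(ii) yields
$$
\bigl|\widehat{\lambda}(\eps) + \widetilde{\zeta}_2 - \eps^2 \lambda_2(2\widetilde{\kappa}_d)\bigr| \le K\eps^4
$$
with $K$ depending only on $A, B, a, \gamma, c_1, c_2$. Substituting $\widetilde{\zeta}_2 = \eps^2\zeta_2$ gives
$$
\widehat{\lambda}(\eps) = -\eps^2\zeta_2 + \eps^2 \lambda_2(2\widetilde{\kappa}_d) + O(\eps^4).
$$
Since $\widehat{S}_A(\omega) = -(1/\pi)\Re\bigl((A - i\omega I_d)^{-1}\bigr)$ is real-analytic in $\omega$ on $\R$ (as $A$ has no purely imaginary eigenvalues), the function $\omega \mapsto \lambda_2(\omega)$ defined by \eqref{lam2 omega} is smooth near $\omega = 2\kappa$. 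Together with $\widetilde{\kappa}_d - \kappa = O(\eps^4)$, this gives $\lambda_2(2\widetilde{\kappa}_d) = \lambda_2(2\kappa) + O(\eps^4)$ and hence \eqref{lamhat eps2}.

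The main obstacle, and the reason the uniform version Theorem \ref{thm lam}(ii) is needed rather than the pointwise statement (i), is that the effective damping $\widetilde{\zeta}_2$ varies with $\eps$ and tends to $0$. A merely pointwise-in-$\zeta_2$ asymptotic would leave open the possibility that the implicit constant in the $O(\eps^4)$ error deteriorates as $\widetilde{\zeta}_2 \downarrow 0$. Theorem \ref{thm lam}(ii) rules this out by fixing $K$ once the range $[c_1,c_2]$ for the damped frequency is fixed, permitting $(\zeta_2,\kappa)$ to vary jointly subject to $c_1 \le \kappa_d \le c_2$, which is exactly the freedom the present argument needs. Thus the substantive point to verify is that the proof of Theorem \ref{thm lam}(ii) in Section \ref{sec lamproof} indeed produces bounds depending on $\zeta_2$ only through $\kappa_d$, with no hidden $\zeta_2^{-1}$ factors arising from the perturbative construction of the invariant measure $m$.
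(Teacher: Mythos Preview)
Your argument is correct and matches the paper's own proof: both apply Theorem~\ref{thm lam}(ii) to the system \eqref{phi xi eps2} viewed as an instance of \eqref{phi xi eps} with damping $\eps^2\zeta_2$ and damped frequency $\sqrt{\kappa^2-\eps^4\zeta_2^2}=\kappa+O(\eps^4)$, then use the smoothness of $\lambda_2$ near $2\kappa$. The paper's version is simply more terse (two sentences), while you spell out the choice of $[c_1,c_2]$ and the reason uniformity in $\kappa_d$ is needed; your closing caveat about hidden $\zeta_2^{-1}$ factors is unnecessary once Theorem~\ref{thm lam}(ii) is taken as stated, since the constant $K$ there is explicitly declared to depend only on $A,B,a,\gamma,c_1,c_2$.
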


\begin{proof}  The damped frequency for \eqref{phi xi eps2} is $\kappa_d = \sqrt{\kappa^2- \eps^4 \zeta_2^4} = \kappa + O(\eps^4)$, and so $\lambda_2(2\kappa_d) = \lambda_2(2\kappa) + O(\eps^4)$.  The result now follows directly from Theorem \ref{thm lam}(ii).  \end{proof}

\s

Returning to the original block and pendulum case and using \eqref{lam2 block} we have 
     \begin{equation}  \label{lam eps2}
    \widehat{\lambda}(\eps) = \eps^2\left(-\zeta_2 + \frac{2 \nu^2 \kappa^2} {(\chi^2-4 \kappa^2)^2+16 \zeta_1^2 \kappa^2}\right) + O(\eps^4)\quad \mbox{ as } \eps \to 0.
     \end{equation}
Recall that the long term exponential growth or decay rate for the process $\big\{\big(\varphi(t), \dot{\varphi}(t) \big): t\geq 0  \big\}$ is given by the Lyapunov exponent $\widehat{\lambda}(\eps)$.   
 Putting $\widehat{\lambda} (\eps)=0$ in \eqref{lam eps2} provides the almost-sure {\em stability boundary} in terms of the excitation intensity $\nu$.  
 Hence the second order approximation of the almost-sure stability boundary is given by
\begin{equation} \label{nu crit}
\nu_{c}^2
= \frac{ \zeta_2 \big[(\chi^2-4\kappa^2)^2 + 16\zeta_1^2 \kappa^2 \big] }{ 2\kappa^2 } 
= \zeta_2\left(\frac{(\chi^2-4\kappa^2 )^2}{2\kappa^2} + 8\zeta_1^2\right) .
\end{equation}
 \begin{figure}[h]
   \begin{center}
   \includegraphics[scale=0.8]
{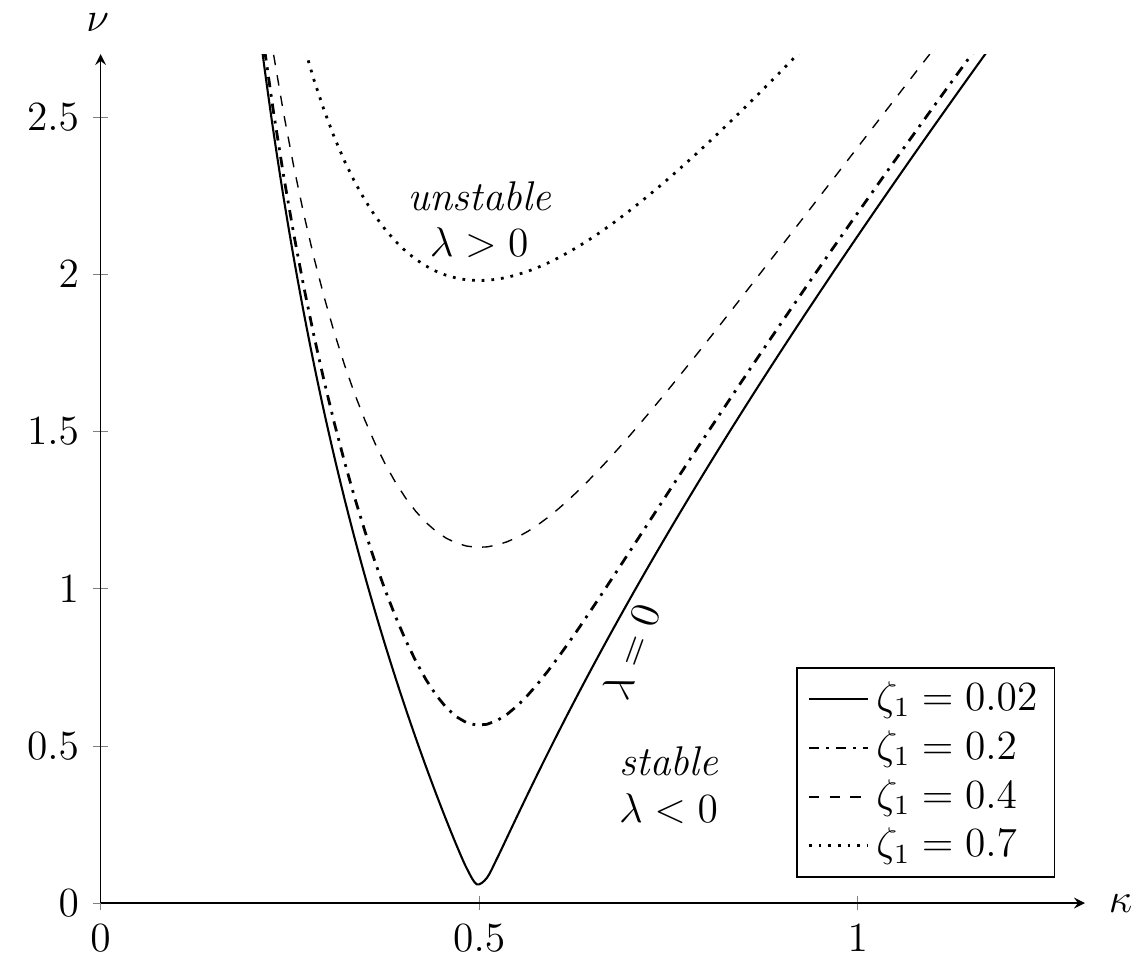}
   \end{center}
  \caption{Almost-sure stability boundaries with $\chi = 1$ and $\zeta_2 = 1$ and $\zeta_1 = 0.02$, 0.2, 0.4 and 0.7.}
   \label{fig nu crit}  
  \end{figure}  
It is clear that the dissipation in both the primary ($\zeta_1$) and secondary ($\zeta_2$) systems has a stabilizing effect on the single mode solution.  Although  no particular attention was given to the $1:2$ resonance in the analysis of the linearized system, the stability boundary \eqref{nu crit} shows the significance of internal resonance,  $\chi  \approx 2\kappa$, in determining the instability region in the $(\kappa,\nu)$ parameter space, which is of significance in applications.
  For fixed $\chi$, $\zeta_1$ and $\zeta_2$ the critical noise intensity $\nu_c$, as a function of $\kappa$, has a minumum value $\sqrt{8 \zeta_1^2 \zeta_2}$ attained when $\kappa = \chi/2$.  Figure \ref{fig nu crit} shows $\nu_c$ as a function of $\kappa$ with $\chi = 1$ and $\zeta_2 = 1$ and $\zeta_1 = 0.02$, 0.2, 0.4 and 0.7.
 
 \s

The behavior displayed in Figure \ref{fig nu crit} mimics that of the instability tongues and transition curves in the stability chart for Mathieu's equation with linear viscous damping and cosine periodic forcing.   More specifically the equation
    \begin{equation} \label{math}
      \ddot{\varphi}(t) + 2 \eps \zeta_2 \dot{\varphi}(t) +(\kappa^2  - \eps \nu \cos \omega t) \varphi(t) =   0 
    \end{equation}  
with small periodic forcing $\eps \nu$ and small dissipation $\eps \zeta_2$ has first order approximation (as $\eps \to 0$) of the stability boundary given by 
    \begin{equation} \label{math nu crit}
    \eps^2(\nu_c^2-16\zeta_2^2) = \omega^2 \left(\frac{4 \kappa^2}{\omega^2} - 1\right)^2.
   \end{equation}
See Verhulst \cite[page 241]{Ver02} for the case $\omega = 2$.   Note that in the deterministic setting the damping is of the same order $\eps$ as the forcing.   
Figure \ref{fig mathieu} shows $\nu_c$ as a function of $\kappa$ with $\omega = 1$, $\zeta_2 = 0.1$ and $\eps = 0.2$, 0.1 and 0.05.  Notice that in this model the ``width'' of the instability region decreases as $\eps$ decreases. 
\begin{figure}[h] 
   \begin{center}
   \includegraphics[scale=0.8]{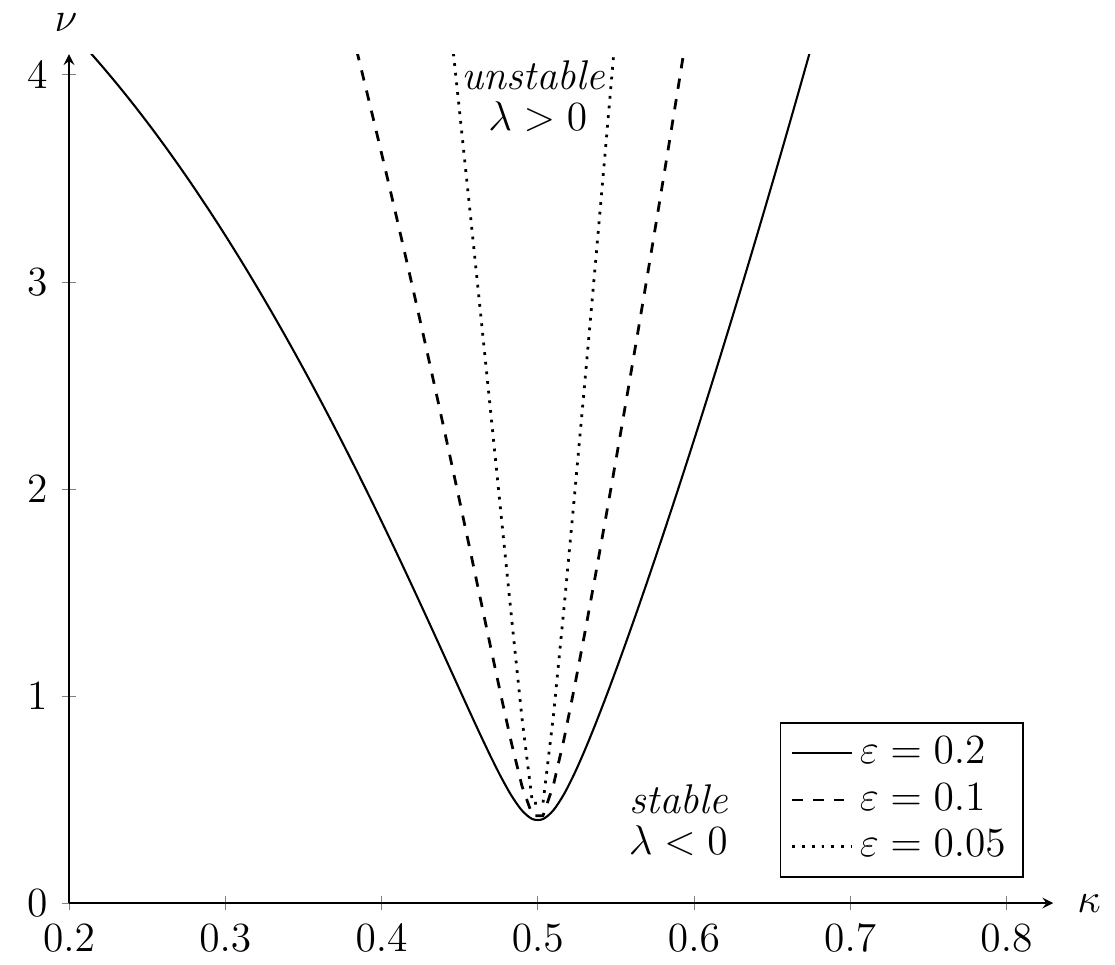}
   \end{center}
   \caption{Almost-sure stability boundaries for the Mathieu equation \eqref{math} with $\omega = 1$, $\zeta_2 = 0.1$ and $\eps = 0.2$, 0.1 and 0.05.}\label{fig mathieu}
  \end{figure}
  
A more directly relevant comparison is seen when we replace white noise forcing in the autoparametric system with periodic (deterministic) forcing.  Consider the system   
\begin{equation}
\label{sys cos}
  \begin{split} 
\ddot{\eta}(t) + 2 \zeta_1 \dot {\eta}(t) + \chi^2 \eta(t) - R \big(\ddot{\theta}(t) \sin{\theta}(t)  + {\dot{\theta}}^2(t) \cos{\theta}(t)\big) &= \eps \nu \cos \omega t\\
 \ddot{\theta}(t) +  2 \eps\zeta_2  \dot {\theta}(t) +  \big(\kappa^2  - \ddot{\eta}(t)  \big)\sin{\theta}(t) &= 0,
\end{split}
\end{equation}  
with forcing of some fixed frequency $\omega$ and small intensity $\eps\nu$, and small pendulum damping $\eps \zeta_2$.  Linearizing along the single mode solution $\theta(t) \equiv 0$ we get
    \begin{align}
     \ddot{\eta}(t) +2 \zeta_1 \dot{\eta}(t)  + \chi^2 \eta(t) &  = \eps\nu \cos \omega t \label{eta cos}\\
     \ddot{\varphi}(t)   + 2 \eps \zeta_2 \dot{\varphi}(t)   + \bigl(\kappa^2 - \ddot{\eta}(t)\bigr) \varphi(t)  &   = 0. \label{phi eta cos}  
 \end{align}
The stationary solution of \eqref{eta cos} is 
    $$
    \eta(t) = \frac{\eps \nu}{\sqrt{(\chi^2-\omega^2)^2+4\zeta_1^2 \omega^2}} \cos(\omega t +\alpha)
    $$
where $\alpha = \mbox{arg}(\chi^2-\omega^2+2i\zeta_1 \omega)$,
and so 
  $$
    \ddot{\eta}(t) = \frac{\eps \omega^2 \nu}{\sqrt{(\chi^2-\omega^2)^2+4\zeta_1^2 \omega^2}} \cos(\omega t +\alpha+\pi).
    $$ 
Therefore \eqref{phi eta cos} has the form of Mathieu's equation \eqref{math} with $\nu$ replaced by $ \dfrac{\omega^2 \nu}{\sqrt{(\chi^2-\omega^2)^2+4\zeta_1^2 \omega^2}}$.  The phase change $\pi+\alpha$ has no effect on the stability, and the (first order) stability boundary is    
\begin{equation} \label{instab bp}
     \eps^2\left( \frac{\omega^4 \nu_c^2}{(\chi^2-\omega^2)^2+4\zeta_1^2 \omega^2} - 16 \zeta_2^2\right) = \omega^2\left(\frac{4 \kappa^2}{\omega^2} - 1\right)^2.
   \end{equation}   
A multiplicative change in the vertical coordinate will convert the stability regions for the Mathieu equation \eqref{math} shown in Figure \ref{fig mathieu} into the corresponding regions for the periodically forced autoparametric system (\ref{eta cos},\ref{phi eta cos}).

\begin{remark} \label{rem eta ddot eta} For the periodically forced system (\ref{eta cos},\ref{phi eta cos}) the functions $\eta(t)$ and $\ddot{\eta}(t)$ are related by a simple multiplicative factor. It makes little theoretical difference whether $\nu \cos \omega t$ is applied as forcing on the block, or is assumed to describe the motion of the pivot point.  The situation with stochastic forcing is very different.  With white noise forcing $\{\eta(t): t \ge 0\}$ is an $L^2$ process with continous sample paths, whereas $\{\ddot{\eta}(t): t \ge 0\}$ exists only as a generalized process and has to be interpreted in terms of stochastic integrals.
\end{remark}

\section{Proof of Theorem \ref{thm well-posed}} \label{sec wellposed}

\subsection{Construction of a Lyapunov function}

Up to normalization, the energy of the block and pendulum system is given by 
    \begin{equation} \label{E}
    E(v_1,v_1,u_1,u_2) = \frac{1}{2}v_2^2 + \frac{1}{2} R u_2^2 - R  v_2 u_2 \sin u_1 + \frac{1}{2}\chi^2 v_1^2 + R \kappa^2(1-\cos u_1).
  \end{equation} 
  
   \begin{lemma} \label{lem Lest}
    $$
   {\cal L }E(v_1,v_1,u_1,u_2) = -2\zeta_1 v_2^2 -2R \zeta_2 u_2^2 + \frac{\nu^2}{2(1-R \sin^2 u_1)}.
   $$
  $$
  {\cal L}\Bigl( v_1(v_2-R u_2 \sin u_1)\Bigr) = v_2^2-R v_2 u_2 \sin u_1 - 2 \zeta_1 v_1v_2 - \chi^2 v_1^2.
  $$
\end{lemma}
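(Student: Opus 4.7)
The approach is direct application of It\^o's formula in the form $\mathcal{L}f = \sum_i b_i \partial_i f + \tfrac{1}{2}\sum_{i,j}(\sigma\sigma^\ast)_{ij} \partial_i\partial_j f$ to each of the two functions in turn, where $b$ and $\sigma$ are read off from the drift and diffusion coefficients in \eqref{sde}. A useful book-keeping remark is that only the components $v_2$ and $u_2$ have non-zero noise coefficients, and both are driven by the same scalar Wiener process $W$, so the diffusion matrix is rank one and the only second-order terms entering $\mathcal{L}f$ are $\tfrac{1}{2}\sigma_{v_2}^2\partial_{v_2}^2 f + \sigma_{v_2}\sigma_{u_2}\partial_{v_2}\partial_{u_2}f + \tfrac{1}{2}\sigma_{u_2}^2\partial_{u_2}^2 f$, with $\sigma_{v_2} = \nu/D$ and $\sigma_{u_2} = \nu \sin u_1 /D$, where $D := 1 - R\sin^2 u_1$.

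For the first identity, I first record the partial derivatives $\partial_{v_1}E = \chi^2 v_1$, $\partial_{v_2}E = v_2 - R u_2 \sin u_1$, $\partial_{u_1}E = -R v_2 u_2 \cos u_1 + R\kappa^2 \sin u_1$, $\partial_{u_2}E = R u_2 - R v_2 \sin u_1$, together with $\partial_{v_2}^2 E = 1$, $\partial_{u_2}^2 E = R$, $\partial_{v_2}\partial_{u_2}E = -R\sin u_1$. The second-order term is then $(\nu^2/2D^2)\bigl[1 - 2R\sin^2 u_1 + R\sin^2 u_1\bigr] = \nu^2/(2D)$, accounting for the noise piece. For the drift, the crucial observation is that $\partial_{v_2}E - R\sin u_1\, \partial_{u_2}E$ is the combination that aligns with the numerators in the drifts of $v_2$ and $u_2$ from \eqref{sde}; I compute $\partial_{v_2}E \cdot b_{v_2} + \partial_{u_2}E \cdot b_{u_2}$ and observe that the denominator $D$ cancels and only $-2\zeta_1 v_2^2$ and $-2R\zeta_2 u_2^2$ plus terms that cancel against $\partial_{v_1}E \cdot v_2 + \partial_{u_1}E \cdot u_2$ survive. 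Grouping everything yields the stated identity.

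For the second identity, let $f = v_1(v_2 - R u_2 \sin u_1)$. Since $f$ is linear in each of $v_2$ and $u_2$, all of $\partial_{v_2}^2 f$, $\partial_{u_2}^2 f$ and $\partial_{v_2}\partial_{u_2}f$ vanish, so there is no second-order contribution. The partial derivatives are $\partial_{v_1}f = v_2 - R u_2 \sin u_1$, $\partial_{v_2}f = v_1$, $\partial_{u_1}f = -R v_1 u_2 \cos u_1$, $\partial_{u_2}f = -R v_1 \sin u_1$. Substituting and again exploiting that the combination $b_{v_2} - R\sin u_1 \cdot b_{u_2}$ produces a cancellation of the $D$ in the denominator, one obtains $b_{v_2} - R\sin u_1\, b_{u_2} = -2\zeta_1 v_2 - \chi^2 v_1 + R u_2^2 \cos u_1$; the $R v_1 u_2^2 \cos u_1$ contributions from the $v_2$-term and the $u_1$-term then cancel, leaving exactly $v_2^2 - R v_2 u_2 \sin u_1 - 2\zeta_1 v_1 v_2 - \chi^2 v_1^2$.

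The only non-routine step is the algebraic cancellation in the first identity: verifying that the cross terms $-2R\zeta_2 u_2 \sin u_1$ and $-R\kappa^2 \sin^2 u_1$ generated by the two drift products exactly eliminate the Coriolis-type contribution $-R v_2 u_2 \cos u_1$ coming through $\partial_{u_1}E$, and that the factor $D$ survives in the denominator only in the noise term. I expect this to be the main bookkeeping obstacle but nothing deeper is needed; the computation is a straightforward but careful verification.
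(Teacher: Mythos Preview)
Your proposal is correct and is precisely the paper's approach: the paper's proof reads in its entirety ``This is direct calculation,'' and what you have written is exactly that calculation, carried out via the generator $\mathcal{L}$ with the drift and diffusion coefficients read off from \eqref{sde}. Your observation that the combination $b_{v_2} - R\sin u_1\, b_{u_2}$ clears the denominator $D = 1 - R\sin^2 u_1$ is the key algebraic simplification, and the rest is bookkeeping as you describe.
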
  

\begin{proof}  This is direct calculation.  \end{proof}

For $\alpha$ to be chosen later, define
  \begin{equation} \label{F}
   F(v_1,v_2,u_1,u_2)   = E(v_1,v_1,u_1,u_2) + \alpha \Bigl( v_1(v_2-R u_2 \sin u_1)\Bigr).
  \end{equation}
     
\begin{proposition} \label{prop F} (i) There exist $\alpha > 0$ and positive constants $c_1,\ldots,c_5$ such that 
  \begin{equation} \label{F ineq}
  c_1\|(v_1,v_2,u_2)\|^2 \le F(v_1,v_2,u_1,u_2) \le c_2+ c_3\|(v_1,v_2,u_2)\|^2
 \end{equation}
and 
  \begin{equation} \label{LF ineq}
  {\cal L}F(v_1,v_2,u_1,u_2) \le c_4-c_5 \|(v_1,v_2,u_2)\|^2.
  \end{equation}
(ii)  For $\alpha > 0$ as in (i), there exists $\beta_0 > 0$ such that for all $\gamma > 0$ there exists $c_6$ such that 
  \begin{equation} \label{LebF}
  {\cal L}(e^{\beta_0 F(v_1,v_2,u_1,u_2)}) \le - \gamma e^{\beta_0 F(v_1,v_2,u_1,u_2)} + c_6.
  \end{equation}
  \end{proposition}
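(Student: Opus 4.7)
For part (i), the energy $E$ is essentially a positive definite quadratic form in $(v_1,v_2,u_2)$ plus a bounded potential term in $u_1$. Specifically, the kinetic block $\tfrac{1}{2}v_2^2 + \tfrac{1}{2}Ru_2^2 - Rv_2u_2\sin u_1$ has coefficient matrix with determinant $R(1-R\sin^2 u_1)/4 \ge R(1-R)/4 > 0$, so it is bounded below by $c_0(v_2^2+u_2^2)$ uniformly in $u_1$; combined with $\tfrac{1}{2}\chi^2 v_1^2$ and the nonnegative term $R\kappa^2(1-\cos u_1)$, this gives $E \ge c_0\|(v_1,v_2,u_2)\|^2$. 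The corrective piece $\alpha v_1(v_2 - Ru_2\sin u_1)$ is controlled by Young's inequality, $|\alpha v_1(v_2-Ru_2\sin u_1)| \le \alpha v_1^2 + \tfrac{\alpha}{2}(v_2^2+R^2u_2^2)$, so for $\alpha$ small enough $F$ satisfies the two-sided bound \eqref{F ineq}.

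For the operator bound \eqref{LF ineq}, I add the two identities of Lemma \ref{lem Lest} to obtain
\[
{\cal L}F = -\alpha\chi^2 v_1^2 + (\alpha-2\zeta_1)v_2^2 - 2R\zeta_2 u_2^2 - 2\alpha\zeta_1 v_1v_2 - \alpha R v_2u_2\sin u_1 + \frac{\nu^2}{2(1-R\sin^2 u_1)}.
\]
The last term is bounded by $\nu^2/(2(1-R))$. I absorb the cross term $-2\alpha\zeta_1 v_1v_2$ using $2|v_1v_2| \le \delta v_1^2 + v_2^2/\delta$ with $\delta = \chi^2/(2\zeta_1)$, and absorb $-\alpha Rv_2u_2\sin u_1$ using $2|v_2u_2| \le \delta' v_2^2 + u_2^2/\delta'$ with $\delta' = \alpha/(2\zeta_2)$. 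This leaves the coefficient of $v_1^2$ at $-\alpha\chi^2/2$, the coefficient of $u_2^2$ at $-R\zeta_2$, and the coefficient of $v_2^2$ equal to $-2\zeta_1 + \alpha + 2\alpha\zeta_1^2/\chi^2 + \alpha^2 R/(4\zeta_2)$, which is negative for all $\alpha$ sufficiently small. Choosing such $\alpha$ yields \eqref{LF ineq} with a constant $c_4$ coming only from the $\nu^2$ term.

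For part (ii), since there is a single driving Brownian motion in \eqref{sde}, It\^o's formula gives
\[
{\cal L}\bigl(e^{\beta_0 F}\bigr) = e^{\beta_0 F}\!\left(\beta_0 {\cal L}F + \tfrac{\beta_0^2}{2}\bigl(\sigma_{v_2}F_{v_2} + \sigma_{u_2}F_{u_2}\bigr)^2\right),
\]
where $\sigma_{v_2} = \nu/(1-R\sin^2 u_1)$ and $\sigma_{u_2} = \nu\sin u_1/(1-R\sin^2 u_1)$ are bounded by $\nu/(1-R)$, and $F_{v_2} = v_2 - Ru_2\sin u_1 + \alpha v_1$ and $F_{u_2} = R(u_2 - v_2\sin u_1 - \alpha v_1\sin u_1)$ are linear in $(v_1,v_2,u_2)$. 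Hence the It\^o correction is bounded by $C\|(v_1,v_2,u_2)\|^2$ for some explicit constant $C$. Combining with \eqref{LF ineq},
\[
{\cal L}\bigl(e^{\beta_0 F}\bigr) \le e^{\beta_0 F}\!\left(\beta_0 c_4 - \bigl(\beta_0 c_5 - \tfrac{\beta_0^2 C}{2}\bigr)\|(v_1,v_2,u_2)\|^2\right).
\]
Fix any $\beta_0 \in (0,\,c_5/C)$ so that the quadratic coefficient is $\le -\beta_0 c_5/2$. Given $\gamma > 0$, on the set where $\|(v_1,v_2,u_2)\|^2 \ge 2(\beta_0 c_4 + \gamma)/(\beta_0 c_5)$ the right-hand side is $\le -\gamma e^{\beta_0 F}$; on the complementary bounded set, $e^{\beta_0 F}$ is uniformly bounded (using the upper bound in \eqref{F ineq} and compactness of $u_1$), so the excess over $-\gamma e^{\beta_0 F}$ is bounded by some finite constant $c_6 = c_6(\gamma)$. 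This proves \eqref{LebF}.

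The main obstacle is the single calibration of $\alpha$: the coefficient of $v_1^2$ in ${\cal L}F$ is $-\alpha\chi^2$, which vanishes at $\alpha = 0$, yet the cross terms $-2\alpha\zeta_1 v_1v_2$ and $-\alpha Rv_2u_2\sin u_1$ also depend on $\alpha$ and must be absorbed without destroying the dominance of $-2\zeta_1 v_2^2$ and $-2R\zeta_2 u_2^2$. Keeping track of the Young's inequality parameters carefully (as above) is what makes the scheme work, and the restriction $R<1$ is essential both to the lower bound on $E$ and to keeping the $\nu^2/(1-R\sin^2 u_1)$ term bounded.
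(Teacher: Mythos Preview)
Your proof is correct and follows essentially the same route as the paper's: Young/Cauchy--Schwarz to control the cross terms in $F$ and in $\mathcal{L}F$, and then the standard chain-rule computation $\mathcal{L}(e^{\beta_0 F}) = e^{\beta_0 F}\bigl(\beta_0\mathcal{L}F + \beta_0^2\Gamma(F,F)\bigr)$ for part (ii). The only cosmetic differences are that the paper bounds the kinetic block in $E$ via $|R v_2 u_2 \sin u_1| \le \tfrac{\sqrt{R}}{2}(v_2^2+Ru_2^2)$ rather than your determinant argument, and it absorbs the $v_2 u_2$ cross term in $\mathcal{L}F$ with the symmetric split $R|v_2u_2|\le \tfrac{R}{2}(v_2^2+u_2^2)$ instead of your $\alpha$-dependent parameter $\delta'=\alpha/(2\zeta_2)$; both choices lead to the same conclusion for small $\alpha$.
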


\begin{proof}   (i) Several applications of the Cauchy-Schwarz inequality give
 \begin{align*}
 & \frac{1-\sqrt{R}}{2}(v_2^2+R u_2^2) + \frac{1}{2}\chi^2v_1^2 - \alpha \Bigl(v_1^2 + \frac{1}{2}v_2^2+ \frac{1}{2}R^2 u_2^2 \Bigr) \\
 &  \hspace{10ex} \le  F(v_1,v_2,u_1,u_2) \\
  & \hspace{10ex} \le R\kappa^2 +\frac{1+\sqrt{R}}{2}(v_2^2+Ru_2^2) + \frac{1}{2}\chi^2v_1^2 +  \alpha \Bigl(v_1^2 + \frac{1}{2}v_2^2+ \frac{1}{2}R^2 u_2^2 \Bigr).
  \end{align*}
Thus the upper and lower bounds on $F$ are satisfied whenever $0 <\alpha < \min(1-\sqrt{R},\chi^2/2)$.

Similarly, using Lemma \ref{lem Lest}, 
\begin{align*}
   {\cal L}F(v_1,v_2,u_1,u_2) & \le \frac{\nu^2}{2(1-R)} -2\zeta_1 v_2^2 -2R \zeta_2 u_2^2  
   + \alpha \Bigl(v_2^2+R| v_2 u_2|+ 2 \zeta_1| v_1v_2| - \chi^2 v_1^2 \Bigr)\\
   & \le  \frac{\nu^2}{2(1-R)} -2\zeta_1 v_2^2 -2R \zeta_2 u_2^2  
   + \frac{\alpha}{2} \Bigl((2+R+\frac{4\zeta_1^2}{\chi^2})v_2^2 + Ru_2^2 - \chi^2v_1^2 \Bigr).
   \end{align*} 
The upper bound on ${\cal L}F$ now follows by choosing $\alpha$ sufficiently small.  

(ii) Let $\Gamma$ denote the carr\'e du champ operator associated with ${\cal L}$, see for example \cite{BakEme85}.
 Since $ \partial F/dv_2$ and $\partial F/du_2$ both grow at most linearly with $\|(v_1,v_2,u_2)\|$ and the coefficients of $dW(t)$ in \eqref{sde} are bounded, there exists $c_7$ such that $\Gamma(F,F) \le c_7\|(v_1,v_2,u_2)\|^2$.  Then
    \begin{align*}
   {\cal L}\left(e^{\beta_0 F}\right) & = \Bigl(\beta_0 {\cal L}F + \beta_0^2 \Gamma(F,F)\Bigr)e^{\beta_0 F}\\
      & \le \Bigl(\beta_0(c_4-c_5 \|(v_1,v_2,u_2)\|^2)  + \beta_0^2 c_7 \|(v_1,v_2,u_2)\|^2\Bigr) e^{\beta_0 F} \\
     & =\Bigl(\beta_0 c_4-\beta_0(c_5-\beta_0 c_7)\|(v_1,v_2,u_2)\|^2 \Bigr) e^{\beta_0 F}.
  \end{align*}    
Choose $\beta_0 > 0$ so that $\beta_0 c_7 < c_5$.  For each $\gamma >0$ there a radius $r$ such that ${\cal L}(e^{\beta_0 F}) \le -\gamma e^{\beta_0 F}$ if $\|(v_1,v_2,u_2)\| \ge r$, and of course ${\cal L}(e^{\beta_0 F})$ is bounded in the set $\|(v_1,v_2,u_2)\| \le r$.  
\end{proof}

\subsection{Consequences of the Lyapunov function}

Recall the notation $(v_1,v_2,u_1,u_2)= U \in N$ and the abuse of notation $\|(v_1,v_2,u_2)\| = \|U\|$.  Fix $\alpha$ and $\beta$ so that the results of Proposition \ref{prop F} are valid for the function $F$.

\s

\n  {\it Proof of Theorem \ref{thm well-posed}(i)}. The coefficients of \eqref{sde} are locally Lipschitz, so there is a well-defined local solution.   Proposition \ref{prop F} implies $F(U) \to \infty$ as $\|U\| \to \infty$, and there exists $c$ such that ${\cal L}(1+F)(U) \le c(1+F)(U)$ for all $U$.  The result of Khas'minskii \cite[Thm 3.5]{Khas} implies that the well-defined local solution exists for all time and is a Feller process.

\n {\it Proof of Theorem \ref{thm well-posed}(ii)}. Notice that if $0 < c < c_5$ and $d = c_4+c$ and $b^2 = (c_4)/(c_5-c)$ then 
    $$
    {\cal L}F(U) \le c_4-c_5\|U\|^2 \le -c(1+\|U\|^2)+d1_{\|U\|\le b}
                       $$  
This gives condition (CD2) of Meyn and Tweedie \cite{MTIII}.  It now follows from \cite[Thm 4.3(i)]{MTIII} that $\PP(\|U_t\| \le b \mbox{ for some }t \ge \delta)= 1$ for all initial conditions $U(0)$ and all $\delta >0$.

\n {\it Proof of Theorem \ref{thm well-posed}(iii)}. The inequality \eqref{LebF} implies that $e^{\beta_0 F}$ satisfies condition (CD2) of \cite{MTIII}.  Taking $\beta = c_1 \beta_0$ and using \eqref{F ineq} for the first inequality, and then \cite[Th 4.3(ii)]{MTIII} for the second inequality gives
   $$
  \int_N \exp\bigl(\beta \|U\|^2\bigr)d\pi(U) \le \int_N \exp\bigl(\beta_0 F(U)\bigr)d\pi(U) \le \frac{c_6}{\gamma}
   $$  
for every invariant probability measure $\pi$ for \eqref{sde}.  

\n {\it Proof of Theorem \ref{thm well-posed}(iv)}. The function $e^{\beta_0 F}$ also satisfies condition (CD3) of \cite{MTIII}, so by the calculation in the proof of \cite[Thm 6.1]{MTIII} we have 
   $$
   \E[e^{\beta_0 F(U_t)}] \le e^{-\gamma t} e^{\beta_0 F(U)} + \frac{c_6}{\gamma}.
   $$
It remains only to use the upper and lower bounds on $F(U)$ given in \eqref{F ineq}.  We have $\beta = c_1 \beta _0$ and $\beta_1 = c_3 \beta_0$ and $K_1 = e^{c_2 \beta_0}$.  \qed

\section{Proof of Proposition \ref{prop ergodic}} \label{sec ergodic}

The generator ${\cal A}$ of the process $\{(v(t),\psi(t)): t \in \R\}$ on $M:= \R^d \times R/(2 \pi \Z)$ given by (\ref{v},\ref{psi}) can be written in the  H\"{o}rmander form
     $$
    {\cal A} =  \frac{1}{2} \sum_{\ell = 1}^m V_\ell^2+ V_0
  $$
where the vector fields $V_0, V_1, \ldots V_m$ are given by 
     $$
V_0(v,\psi) = \begin{pmatrix} Av \\[1ex] -1+(1-\kappa^2 + \langle a,v\rangle) \cos^2 \psi -  \zeta_2 \sin 2\psi   \end{pmatrix} \quad \mbox{ and } \quad  V_\ell(\theta,v) = \begin{pmatrix} Be_\ell \\[1ex] \gamma_\ell \cos^2 \psi  \end{pmatrix}
   $$
for $1 \le \ell \le m$.  

\subsection{Hypoellipticity}

Let $L = L(V_0,V_1,\ldots,V_m)$ be the Lie algebra generated by the vector fields $V_0,V_1,\ldots,V_m$ and let $L_0 = L_0(V_0;V_1,\ldots,V_m)$ be the ideal in $L$ generated by $V_1,\ldots,V_m$.

 For any smooth vector field $V$ on $M$, let ${\cal M}(V)$ denote the Lie bracket
    $${\cal M}(V) = [V,V_0],
    $$   
so that $-{\cal M}$ is the operation of taking the Lie derivative of a vector field with respect to $V_0$.

\begin{lemma} \label{lem calc}

(i) For all $k \ge 0$ and $1 \le \ell \le m$ 
 $$
   {\cal M}^k(V_\ell)(v,\psi) = \begin{pmatrix} A^kBe_\ell \\[1ex] f_{kl}(v,\psi) \end{pmatrix}
   $$
for some smooth function $f_{kl}: M \to \R$.

(ii) For a vector field of the form $V(v,\psi) = \begin{pmatrix} v_0 \\[1ex] 0 \end{pmatrix}$ we have 
  $$
  {\cal M}(V)(v,\psi) = \begin{pmatrix} Av_0 \\[1ex]   \langle a,v_0 \rangle \cos^2 \psi \end{pmatrix}.
  $$  
  
(iii)  For a vector field of the form $V(v,\psi) = \begin{pmatrix} v_0 \\[1ex] \alpha \cos^2 \psi \end{pmatrix}$ we have 
  $$
  [{\cal M}(V),V](v,\psi) = \begin{pmatrix} 0 \\[1ex]  2\alpha^2\cos^2 \psi \end{pmatrix} \quad \mbox{ and } \quad  [{\cal M}^2(V),{\cal M}(V)](v,\pm \pi/2) = \begin{pmatrix} 0 \\[1ex] 4\alpha^2  \end{pmatrix}.
  $$

\end{lemma}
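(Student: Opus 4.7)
The proof is a direct Lie-bracket calculation in coordinates, relying throughout on the formula $[X,Y](p) = (DY)_p X(p) - (DX)_p Y(p)$ for vector fields on $M = \R^d \times \R/(2\pi \Z)$. The structural observation to exploit is that $V_0$ has first $\R^d$-valued component the linear map $v \mapsto Av$, while every vector field appearing in the statement of the lemma has a \emph{constant} first component. Consequently, the first component of $DW \cdot V_0$ vanishes and the first component of $(DV_0) W$ is just $A$ acting on the first component of $W$; so bracketing any such $W$ with $V_0$ replaces the first component by $A$ times itself.

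Part (ii) is one line: for $V = (v_0, 0)^T$ with $v_0$ constant, $DV = 0$, so ${\cal M}(V) = (DV_0)V = (Av_0, \langle a,v_0\rangle \cos^2\psi)^T$ by inspection of $V_0$ (using $\partial_v \langle a,v\rangle \cos^2\psi = \cos^2\psi \cdot a^T$). Part (i) I would prove by induction on $k$. The case $k=0$ is the definition of $V_\ell$. For the inductive step the structural observation gives the first component of ${\cal M}^{k+1}(V_\ell)$ as $A \cdot A^k B e_\ell = A^{k+1} B e_\ell$, while the second component is some smooth $f_{k+1,\ell}(v,\psi)$ whose explicit form plays no role in the rest of the paper.

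For part (iii), expand ${\cal M}(V)$ for $V = (v_0, \alpha \cos^2\psi)^T$ with $v_0$ constant. After simplification (using $\cos 2\psi + 2\sin^2\psi = 1$ to combine the $\zeta_2$ terms) one finds
\[ {\cal M}(V)(v,\psi) = \begin{pmatrix} Av_0 \\ \langle a,v_0\rangle \cos^2\psi - \alpha \sin 2\psi - 2\alpha\zeta_2 \cos^2\psi \end{pmatrix}, \]
whose second component depends only on $\psi$; call it $h(\psi)$. The first identity of (iii) then comes from $[{\cal M}(V), V] = DV \cdot {\cal M}(V) - D{\cal M}(V) \cdot V$: the first component vanishes (both fields have constant first components), and the second reduces to $-\alpha \sin 2\psi \cdot h(\psi) - \alpha \cos^2\psi \cdot h'(\psi)$. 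The $\langle a,v_0\rangle$ and $\zeta_2$ contributions cancel, and the trigonometric identity $\sin^2 2\psi + 2\cos^2\psi \cos 2\psi = 2\cos^2\psi$ reduces the remainder to $2\alpha^2 \cos^2\psi$.

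For the final identity, I would specialize to $\psi = \pm\pi/2$, where $\cos\psi = 0$ and hence $h(\pm\pi/2) = 0$, $g(v,\pm\pi/2) = -1$, $h'(\pm\pi/2) = 2\alpha$, and $\partial_\psi g(v,\pm\pi/2) = 2\zeta_2$. A short computation of ${\cal M}^2(V) = [{\cal M}(V), V_0]$ at these points gives $(A^2 v_0, 2\alpha)^T$. In the bracket $[{\cal M}^2(V), {\cal M}(V)]$ evaluated at $\psi = \pm\pi/2$, every contribution involving $h(\pm\pi/2)$ or a factor of $\cos\psi$ drops out, leaving only $h'(\pm\pi/2)\cdot 2\alpha = 4\alpha^2$ in the second slot. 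The main obstacle is purely algebraic bookkeeping; the conceptual trick that makes the last identity tractable is to evaluate at $\psi = \pm\pi/2$ before completing the second bracket, since the general-$\psi$ formula for ${\cal M}^2(V)$ is considerably more involved.
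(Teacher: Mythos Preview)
Your proposal is correct and follows the same direct-computation approach the paper indicates; the paper simply writes ``elementary and direct'' and omits all details, so your write-up is in fact more informative than the original. One small point worth making explicit: in the last step you cannot literally plug in $\psi=\pm\pi/2$ \emph{before} forming the bracket, since $[{\cal M}^2(V),{\cal M}(V)]$ requires the Jacobian $D{\cal M}^2(V)$ at that point; however, your argument is still fine because the only Jacobian entry not killed by $h(\pm\pi/2)=0$ is $\nabla_v k(v,\pm\pi/2)$, and that vanishes since every $v$-derivative of $g$ carries a factor $\cos^2\psi$.
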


\begin{proof} The calculations for (i) and (ii) are elementary and direct.  The calculations for (iii) are longer, but still elementary and direct.  We omit the details. 
\end{proof}

\begin{proposition} \label{prop liealg} Assume $a \in\R^d$ and $\gamma \in \R^m$ are not both 0.  Assume also that $(A,B)$ is a controllable pair.  Then ${\cal L}_0(v,\psi) = T_{(v,\psi)}M$ for all $(v,\psi) \in M$. 
   \end{proposition}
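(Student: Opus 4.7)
The plan is to exhibit enough vector fields in $\mathcal{L}_0$ so that at every $(v,\psi) \in M$ their values span the $(d+1)$-dimensional tangent space. Since $\mathcal{L}_0$ is an ideal in $\mathcal{L}$ containing $V_1,\dots,V_m$, every iterated Lie derivative $\mathcal{M}^k(V_\ell)$ still lies in $\mathcal{L}_0$. By Lemma~\ref{lem calc}(i) the constant $v$-components of these fields are $A^k Be_\ell$, and by controllability of $(A,B)$ (together with Cayley--Hamilton) the collection $\{A^k Be_\ell : 0 \le k \le d-1,\ 1 \le \ell \le m\}$ spans $\R^d$. Hence the projection of $\mathcal{L}_0(v,\psi)$ onto the $v$-coordinates is already surjective at every point of $M$, and the task reduces to producing, at each $(v,\psi)$, a single additional element of $\mathcal{L}_0$ whose $v$-component vanishes but whose $\psi$-component is nonzero.

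When $\gamma \neq 0$, I would pick $\ell$ with $\gamma_\ell \neq 0$ so that $V_\ell = \begin{pmatrix} Be_\ell \\ \gamma_\ell \cos^2 \psi \end{pmatrix}$ already has the form $\begin{pmatrix} v_0 \\ \alpha \cos^2 \psi \end{pmatrix}$ required by Lemma~\ref{lem calc}(iii). That lemma then produces $[\mathcal{M}(V_\ell),V_\ell] = \begin{pmatrix} 0 \\ 2\gamma_\ell^2 \cos^2 \psi \end{pmatrix}$, a purely vertical field that is nonzero wherever $\cos\psi \neq 0$, together with a second bracketed field in $\mathcal{L}_0$ whose value at $(v, \pm\pi/2)$ equals $\begin{pmatrix} 0 \\ 4\gamma_\ell^2 \end{pmatrix}$. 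Between them, these cover every $\psi \in \R/(2\pi\Z)$.

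If instead $\gamma = 0$ (so necessarily $a \neq 0$), each $V_\ell = \begin{pmatrix} Be_\ell \\ 0 \end{pmatrix}$ has the form $\begin{pmatrix} v_0 \\ 0 \end{pmatrix}$ that Lemma~\ref{lem calc}(ii) processes. I would proceed by induction: so long as $\langle a, A^j Be_{\ell'}\rangle = 0$ for every $j < k$ and every $\ell'$, repeated applications of part (ii) yield $\mathcal{M}^k(V_\ell) = \begin{pmatrix} A^k Be_\ell \\ 0 \end{pmatrix}$, after which one more step produces $\mathcal{M}^{k+1}(V_\ell) = \begin{pmatrix} A^{k+1}Be_\ell \\ \langle a, A^k Be_\ell\rangle \cos^2 \psi \end{pmatrix}$. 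Because $a \neq 0$ and the $A^k Be_\ell$ span $\R^d$, there is a smallest $k_0 \le d-1$ and some $\ell_0$ with $\alpha_0 := \langle a, A^{k_0}Be_{\ell_0}\rangle \neq 0$; for that pair, $\mathcal{M}^{k_0+1}(V_{\ell_0})$ has the form of Lemma~\ref{lem calc}(iii), and the argument of the previous paragraph applies verbatim with $\alpha_0$ in place of $\gamma_\ell$.

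Linear independence at $(v,\psi)$ of the $d$ horizontal fields together with either vertical field is immediate from the block-triangular shape of the matrix of their components, so these observations close the proof. The main obstacle I anticipate is the degenerate locus $\psi = \pm\pi/2$, where $\cos^2 \psi = 0$ and the first-order bracket $[\mathcal{M}(V),V]$ collapses; handling that is exactly the purpose of the second formula in Lemma~\ref{lem calc}(iii), which is why two distinct bracket identities had to be assembled there.
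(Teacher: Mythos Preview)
Your proposal is correct and follows essentially the same approach as the paper's proof: use $\{\mathcal{M}^k(V_\ell)\}$ with controllability to span the $v$-directions, then use the two bracket identities of Lemma~\ref{lem calc}(iii) applied to either $V_{\ell_0}$ (when $\gamma\neq0$) or $\mathcal{M}^{k_0+1}(V_{\ell_0})$ (when $\gamma=0$, $a\neq0$) to obtain a nonzero vertical field at every $\psi$, including the degenerate points $\psi=\pm\pi/2$. The only cosmetic difference is that the paper fixes $\ell_0$ first and then takes the minimal $k_0$ for that index, whereas you minimize $k_0$ over all $\ell$ simultaneously; both choices yield a field of the form required by Lemma~\ref{lem calc}(iii).
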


\begin{proof} If $\gamma \neq 0$ choose $\ell_0$ such that $\gamma_{\ell_0}\neq 0$ and consider the finite subset
    $$
     \{ {\cal M}^k(V_\ell): 0 \le k \le d-1,1 \le \ell \le m\}  \cup \{ [{\cal M}(V_{\ell_0}),V_{\ell_0}],\, [{\cal M}^2(V_{\ell_0}),{\cal M}(V_{\ell_0})]\} \equiv {\cal N}_1 \cup {\cal N}_2,
    $$    
say, of ${\cal L}_0$.  
By Lemma \ref{lem calc}(i) at each $(v,\psi)$ the $v$ components of the vector fields in ${\cal N}_1$ form the 
set $\{A^kBe_\ell: 0 \le k \le d-1,1 \le \ell \le m\}$ and this set spans $\R^d$ because of the controllable pair condition.  By Lemma \ref{lem calc}(iii) with $V = V_{\ell_0}$, at each $(v,\psi)$ the vector fields in ${\cal N}_2$ span $\{0\} \times \R$.  Together the set $({\cal N}_1 \cup {\cal N}_2)(v,\psi)$ spans $\R^d \times \R = T_{(v,\psi)}M$.  

If $\gamma = 0$ and $a \neq 0$ the controllable pair condition implies there exists $k_0 \ge 0$ and $1 \le \ell_0 \le m$ such that $\langle a,A^k Be_{\ell_0} \rangle = 0$ for $0 \le k <k_0$ and $\langle a,A^{k_0} Be_{\ell_0} \rangle \neq 0$.   By Lemma \ref{lem calc}(ii) we have ${\cal M}^k(V_{\ell_0})(v,\psi) = \begin{pmatrix}  A^kBe_{\ell_0} \\[1ex] 0 \end{pmatrix}$ for $k \le k_0$ and 
  $$
  {\cal M}^{k_0+1}(V_{\ell_0})(v,\psi) = \begin{pmatrix} A^{k_0+1}Be_{\ell_0}\\[1ex]   \langle a,A^{k_0}Be_{\ell_0} \rangle \cos^2 \psi\end{pmatrix}.
  $$
Since $\langle a,A^{k_0}Be_{\ell_0} \rangle \neq 0$ we can replace $V_{\ell_0}$ with ${\cal M}^{k_0+1}(V_{\ell_0})$ in ${\cal N}_2$ above.   We can apply Lemma \ref{lem calc}(iii) with $V = {\cal M}^{k_0+1}(V_{\ell_0})$, so that the vector fields in the new ${\cal N}_2$ span $ \{0\} \times \R$ and the proof is completed as before.  
\end{proof}

\subsection{Controllability}  

\begin{lemma}  \label{lem cont2}  Fix $t> 0$ and piecewise continuous $c:[0,t] \to \R$ with $c$ not identically zero.  For all $\psi_0$ and $\psi_1$ there exists $\alpha \in \R$ such that the path $\psi:[0,t] \to \R$ given by 
    \begin{equation} \label{psi c}
    \frac{d \psi(s)}{ds}= -1 +\bigl(1-\kappa^2 + \alpha c(s)\bigr)\cos^2 \psi(s)  - \zeta_2 \sin 2\psi(s), \quad 0 \le s \le t
    \end{equation}
with $\psi(0) = \psi_0$ has $\psi(t) = \psi_1$ \rm{mod} $2\pi$.
 \end{lemma}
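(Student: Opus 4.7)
The plan is to view \eqref{psi c} as the Pr\"ufer equation for the continuously lifted argument $\psi(s;\alpha) = \arg(\varphi(s;\alpha),\dot\varphi(s;\alpha)) \in \R$ of the deterministic second order linear ODE
\[
\ddot\varphi(s) + 2\zeta_2\dot\varphi(s) + \bigl(\kappa^2 - \alpha c(s)\bigr)\varphi(s) = 0, \qquad \bigl(\varphi(0),\dot\varphi(0)\bigr) = (\cos\psi_0, \sin\psi_0).
\]
Smooth dependence of ODE solutions on the parameter $\alpha$ makes $\alpha\mapsto \psi(t;\alpha) \in \R$ continuous, so by the intermediate value theorem it suffices to prove that the image of this map has diameter at least $2\pi$. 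I will establish the stronger claim that $\psi(t;\alpha) \to -\infty$ as $\alpha \to -\infty$ (or, symmetrically, as $\alpha \to +\infty$).

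Strip off the damping via $\varphi(s) = e^{-\zeta_2 s}\tilde\varphi(s)$, producing the standard Sturm--Liouville form $\ddot{\tilde\varphi}(s) + q(s;\alpha)\tilde\varphi(s) = 0$ with $q(s;\alpha) = \kappa^2 - \zeta_2^2 - \alpha c(s)$. Since $c$ is piecewise continuous and not identically zero there is a closed subinterval $[s_1,s_2]\subset [0,t]$ on which $c(s) \ge \delta$ for some $\delta > 0$ (if only $c \le -\delta$ can be arranged, run the rest of the argument with $\alpha = +\beta$ instead). Setting $\alpha = -\beta$ with $\beta > 0$ gives $q(s;-\beta) \ge \kappa^2 - \zeta_2^2 + \beta\delta$ on $[s_1,s_2]$, and Sturm's comparison theorem against the constant-coefficient equation $\ddot v + (\kappa^2 - \zeta_2^2 + \beta\delta)v = 0$, whose consecutive zeros are $\pi/\sqrt{\kappa^2-\zeta_2^2+\beta\delta}$ apart, shows that the number $N(\beta)$ of zeros of $\tilde\varphi(\cdot;-\beta)$ in $[s_1,s_2]$ satisfies $N(\beta) \to \infty$ as $\beta \to \infty$.

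The lifted argument $\tilde\psi(s) = \arg(\tilde\varphi(s),\dot{\tilde\varphi}(s))$ obeys the Pr\"ufer equation $\dot{\tilde\psi} = -q\cos^2\tilde\psi - \sin^2\tilde\psi$, which equals $-1$ whenever $\tilde\psi \in \pi/2 + \pi\Z$. Hence $\tilde\psi$ crosses $\pi/2 + \pi\Z$ only in the strictly decreasing direction, so between two consecutive zeros of $\tilde\varphi$ (where $\tilde\varphi$ has constant sign) the lift $\tilde\psi$ is trapped in a single open interval of length $\pi$ bounded by consecutive elements of $\pi/2 + \pi\Z$, entering at the top end and exiting at the bottom for a net decrease of exactly $\pi$. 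The same monotone-crossing principle controls the pieces before the first and after the last zero of $\tilde\varphi$, each of which contributes a non-positive change lying in $(-\pi,0]$, so that
\[
\tilde\psi(t;-\beta) - \tilde\psi(0) \le -\pi\bigl(N(\beta) - 1\bigr) \longrightarrow -\infty \qquad(\beta \to \infty).
\]
Since $(\varphi,\dot\varphi) = e^{-\zeta_2 s}(\tilde\varphi,\dot{\tilde\varphi} - \zeta_2\tilde\varphi)$ is a fixed invertible linear transformation of $(\tilde\varphi,\dot{\tilde\varphi})$ at each $s$, the difference $\psi(s;\alpha)-\tilde\psi(s;\alpha)$ is bounded uniformly in $s$ and $\alpha$, whence $\psi(t;-\beta) \to -\infty$ as well. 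Continuity in $\alpha$ then makes the image of $\alpha\mapsto \psi(t;\alpha)$ an unbounded-below connected subset of $\R$ containing $\psi(t;0)$, hence containing $(-\infty,\psi(t;0)]$ and meeting every residue class modulo $2\pi$.

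The main subtlety is controlling $\tilde\psi$ outside $[s_1,s_2]$, where $c$ may change sign and $q(\cdot;\alpha)$ may be very negative (a hyperbolic regime for the underlying linear system in which $\tilde\psi$ is not monotone and, on purely local grounds, $|\dot{\tilde\psi}|$ can grow as $|q|$ does); the winding-via-zero-count observation defuses this obstacle, because on any interval where $\tilde\varphi$ keeps constant sign the lift $\tilde\psi$ is confined to a single open interval of length $\pi$ and so contributes at most $\pi$ of total change and never any positive net winding. Backward rotation on the end pieces therefore cannot cancel the unbounded negative winding accumulated on $[s_1,s_2]$.
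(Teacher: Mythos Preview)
Your proof is correct and follows the same high-level strategy as the paper: show $\psi(t;\alpha)\to -\infty$ along one ray in $\alpha$, then invoke the intermediate value theorem. The technical execution differs. The paper stays entirely at the level of the first-order angle ODE \eqref{psi c}: on the good subinterval it compares with the autonomous equation $\dot{\tilde\psi}=-1-\beta\cos^2\tilde\psi-\zeta_2\sin 2\tilde\psi$, separates variables, and reads off the winding time $\pi/\sqrt{\beta+1-\zeta_2^2}$ directly; off the good subinterval it uses exactly your ``monotone-crossing'' observation (the right side equals $-1$ at $\psi\equiv\pi/2\bmod\pi$) to bound the change by $\pi$. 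You instead pass to the underlying second-order linear equation, strip the damping, and replace the explicit integration by Sturm's comparison theorem and a zero-count-to-winding translation. Your route is slightly longer (two changes of variable, then a transfer back via the bounded angle distortion of the shear $\bigl(\begin{smallmatrix}1&0\\-\zeta_2&1\end{smallmatrix}\bigr)$), but it has the advantage of invoking a standard named tool rather than an ad hoc separation of variables, and it makes transparent why the argument is insensitive to the sign of $q$ outside the good subinterval. Either way, the crucial off-interval bound is the same observation, and your justification of it (confinement to a single $\pi$-interval between zeros, with crossings of $\pi/2+\pi\Z$ only downward) is correct; note that for the final estimate you only need each end piece to contribute less than $\pi$, so the sharper ``non-positive'' claim, while true, is not essential.
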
 

\begin{proof}  At the cost of changing the signs of $c$ and $\alpha$, we may assume there exists a subinterval $[t_0,t_1] \subset [0,t]$ and $\delta >0$ such that $c(s) \le -\delta$ for $t_0 \le s \le t_1$.   If $\beta > \zeta_2^2 - 1$ then the right side of 
 \begin{equation} \label{psi beta}
    \frac{d\tilde{\psi}(s)}{ds} = -1 - \beta \cos^2 \tilde{\psi}(s)- \zeta_2 \sin 2\tilde{\psi}(s), \quad t_0 \le s \le t_1
    \end{equation}
is never zero.  Separating variables and integrating gives
   $$
   \arctan\left(\frac{\tan \tilde{\psi}(s) +\zeta_2}{\sqrt{\beta + 1-\zeta_2^2}}\right) = -\bigl(\sqrt{\beta + 1-\zeta_2^2}\bigr)s + \mbox{ constant}.
   $$
A transition from $\tilde{\psi} = \pi/2$ to $\tilde{\psi} = -\pi/2$ (or vice-versa) changes the left side by $\pi$, and so the time taken for $\tilde{\psi}$ to move distance $\pi$ is $\pi/\sqrt{\beta + 1-\zeta_2^2}$.  Therefore $\tilde{\psi}(t_1) - \tilde{\psi}(t_0) \to -\infty$ as $\beta \to \infty$, and then by comparison $\psi(t_1) - \psi(t_0) \to -\infty$ as $\alpha \to \infty$.
  
 Since the right side of \eqref{psi c} is negative whenever $\psi(s) \equiv \pi/2$ mod $\pi$, then on any subinterval $[t_2,t_3]$ we have $\psi(t_3)-\psi(t_2) \le \pi$.  Therefore
    \begin{align*}
     \psi(t) & = \psi_0 + \Bigl(\psi(t_0)-\psi(0)\Bigr) + \Bigl(\psi(t_1) - \psi(t_0) \Bigr) + \Bigl(\psi(t) - \psi(t_1) \Bigr)\\
     & \le \psi_0 + 2\pi +  \Bigl(\psi(t_1) - \psi(t_0) \Bigr) \to -\infty
    \end{align*}
as $\alpha \to \infty$.  Since $\psi(t)$ depends continuously on $\alpha$, it follows by the intermediate value theorem that $\psi(t)$ takes values of the form $2\pi n + \psi_1$ for infinitely many $\alpha$ as $\alpha \to \infty$, and we are done.  
\end{proof}

\begin{proposition} \label{prop cont} Assume $a \in\R^d$ and $\gamma \in \R^m$ are not both 0.  Assume also that $(A,B)$ is a controllable pair.  Given $(v_0,\psi_0)$ and $(v_1,\psi_1)$ in $M$ and $t > 0$ there exists a piecewise continuous function $b:[0,t] \to \R^m$ such that the path $p:[0,t] \to M$ defined by 
    \begin{equation} \label{cont p}
    \frac{dp(s)}{ds} = V_0(p(s))+ \sum_{\ell = 1}^m b_\ell(s)V_\ell(p(s)), \quad 0 \le s \le t
    \end{equation}
with $p(0) = (v_0,\psi_0)$ has $p(t) = (v_1,\psi_1)$.
\end{proposition}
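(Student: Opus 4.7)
The plan is to exploit controllability of $(A,B)$ in the $v$-component, together with a small extension of Lemma \ref{lem cont2} in the $\psi$-component. The set of piecewise continuous controls $b$ steering $v_0$ to $v_1$ in time $t$ is a nonempty affine subspace whose underlying kernel $\{b:\int_0^t e^{(t-s)A}Bb(s)\,ds=0\}$ is infinite-dimensional; this extra freedom will be used to adjust $\psi(t)$. First, by the controllability of $(A,B)$, I pick a piecewise continuous reference $b^{\ast}:[0,t]\to\R^m$ such that the solution of $\dot v=Av+Bb^{\ast}$ with $v^{\ast}(0)=v_0$ satisfies $v^{\ast}(t)=v_1$, and I denote by $\psi^{\ast}$ the associated $\psi$-path with $\psi^{\ast}(0)=\psi_0$.

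Next I consider perturbed controls $b=b^{\ast}+\alpha\tilde b$ with $\alpha\in\R$ and $\tilde b$ piecewise continuous in the kernel above. Setting $w(s)=\int_0^s e^{(s-\tau)A}B\tilde b(\tau)\,d\tau$, the perturbed $v$-trajectory is $v(s)=v^{\ast}(s)+\alpha w(s)$ with $w(0)=w(t)=0$, so $v(t)=v_1$ holds for every $\alpha$. The $\psi$-equation becomes
\begin{equation}
\dot\psi(s) = -1 + \bigl(1-\kappa^2 + g(s) + \alpha c(s)\bigr)\cos^2\psi(s) -\zeta_2\sin 2\psi(s),
\end{equation}
where $g(s):=\langle a,v^{\ast}(s)\rangle+\langle\gamma,b^{\ast}(s)\rangle$ is a bounded piecewise continuous function independent of $\alpha$, and $c(s):=\langle a,w(s)\rangle+\langle\gamma,\tilde b(s)\rangle$.

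To ensure $c\not\equiv 0$ I split into two cases. If $\gamma\neq 0$, take $\tilde b(s)=\varphi(s)\gamma$ on a subinterval and a compensating piece supported on a disjoint subinterval so that the kernel constraint is met; the first piece contributes $\|\gamma\|^2\varphi(s)$ to $c$ and makes $c\not\equiv 0$. If $\gamma=0$ and $a\neq 0$, use controllability of $(A,B)$ on each of $[0,s_0]$ and $[s_0,t]$ to build $\tilde b$ in the kernel with $w(s_0)=a$; then $\langle a,w(s_0)\rangle=\|a\|^2\neq 0$ so $c\not\equiv 0$ by continuity. In either case $\tilde b$ is piecewise continuous.

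With $c\not\equiv 0$ fixed, the equation above differs from \eqref{psi c} only by the bounded drift $g(s)\cos^2\psi$. Revisiting the proof of Lemma \ref{lem cont2}: on a subinterval $[t_0,t_1]$ where $c(s)\le-\delta<0$, the coefficient of $\cos^2\psi$ in $\dot\psi$ is bounded above by $1-\kappa^2+\sup_{[0,t]}|g|-\alpha\delta$, which tends to $-\infty$ as $\alpha\to+\infty$; the same comparison with $\dot{\tilde\psi}=-1-\beta\cos^2\tilde\psi-\zeta_2\sin 2\tilde\psi$ yields $\psi(t_1)-\psi(t_0)\to-\infty$. On the complementary intervals $\psi$ still cannot cross $\pi/2$ modulo $\pi$ upward (the right side equals $-1$ there, uniformly in $\alpha$), so increments are bounded by $\pi$, and hence $\psi(t)\to-\infty$ as $\alpha\to+\infty$. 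Continuous dependence on $\alpha$ plus the intermediate value theorem produces $\alpha$ with $\psi(t)\equiv\psi_1\pmod{2\pi}$, and $b=b^{\ast}+\alpha\tilde b$ is the desired control. The main obstacle is the case split producing $c\not\equiv 0$; once that is in hand the bounded drift $g$ is absorbed transparently by the same comparison argument used in Lemma \ref{lem cont2}.
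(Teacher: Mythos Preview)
Your argument is correct and complete. It differs from the paper's proof in two organizational respects. First, the paper begins by reducing to $v_0=v_1=0$: it uses controllability on $[0,t/3]$ and $[2t/3,t]$ to take care of the $v$-endpoints, so only the $\psi$-component needs to be matched on the middle third. You instead fix a single reference control $b^\ast$ steering $v_0$ to $v_1$ and perturb inside the affine kernel, which is a little more direct. Second, in the case $\gamma\neq 0$ the paper uses a feedback substitution: choosing $T$ with $T\gamma=a$, it replaces the drift by $V_0-\sum_\ell\langle Te_\ell,v\rangle V_\ell$ so that the $\langle a,v\rangle\cos^2\psi$ term disappears from the $\psi$-equation, allowing Lemma~\ref{lem cont2} to be applied verbatim. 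You keep that term and absorb it into the bounded function $g$, at the cost of a trivial extension of Lemma~\ref{lem cont2} (the comparison with $\dot{\tilde\psi}=-1-\beta\cos^2\tilde\psi-\zeta_2\sin2\tilde\psi$ goes through unchanged since $g$ is bounded, and the barrier at $\psi\equiv\pi/2\bmod\pi$ is unaffected because $\cos^2\psi=\sin2\psi=0$ there). Your treatment is more uniform across the two cases $\gamma\neq0$ and $\gamma=0$; the paper's feedback trick buys a clean invocation of the lemma as stated.
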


\begin{proof}
 The controllable pair condition allows us to control the $v$ component of a path.  There is a control path $p_1$, say, which sends $v_0$ to $0$ in the time interval $[0,t/3]$. It sends $(v_0,\psi_0)$ to $(0,\psi_2)$ for some $\psi_2 \in \R/(2 \pi \Z)$.  There is also a control path $p_2$, say, which sends 0 to $v_1$ in the time interval $[2t/3,t]$.    It induces an invertible linear mapping of $\R^2$, and so there is $\psi_3 \in \R/(2 \pi \Z)$  so that $p_2$ sends $(0,\psi_3)$ to $(v_1,\psi_1)$.  It only remains to find a path during the middle interval $[t/3,2t/3]$ which sends $(0,\psi_2)$ to $(0,\psi_3)$.  Equivalently, we see that it is sufficient to prove Proposition \ref{prop cont} in the special case $v_0=v_1 = 0$.

Assume first $\gamma = 0$ and $a \neq 0$.  The controllable pair condition implies there exists $1 \le \ell \le m$ and $0 < t_0 < t$ such that $\langle a,e^{sA}Be_\ell \rangle \neq 0$ for $0 < s  \le t_0$.  Choose control function $s \mapsto b(s)e_\ell$ for a piecewise continuous $b:[0,t] \to \R$.  The $v$ component of \eqref{cont p} is 
    $$
    \frac{dv(s)}{ds} = Av(s) + b(s)Be_\ell, \quad v(0) = 0
    $$
and this has solution
   $$
   v(t)  = \int_0^t b(s)\Bigl(e^{(t-s)A} Be_\ell\Bigr)ds.
   $$
In order to satisfy $v(t) = 0$ we require 
  \begin{equation} \label{b cond}
  \int_0^t b(s)\Bigl(e^{(t-s)A} Be_\ell\Bigr)_i ds = 0, \quad i = 1,2,\ldots,d.
  \end{equation}
The $\psi$ component of \eqref{cont p} is 
   $$
   \frac{d \psi(s)}{ds} = -1+(1-\kappa^2 + \langle a,v(s)\rangle) \cos^2 \psi(s) -  \zeta_2 \sin 2\psi(s).
   $$   
Since 
  $$
  \langle a,v(s) \rangle = \int_0^s b(r) \langle a,e^{(s-r)A}Be_\ell \rangle dr
  $$
and $\langle a,e^{sA}Be_\ell \rangle \neq 0$ for $0 < s \le t_0$, there exists a piecewise continuous $b:[0,t_0] \to \R$ 
such that 
$\langle a,v(s)\rangle \neq 0$.  Now extend $b$ to the full interval $[0,t]$ in such a way that  \eqref{b cond} is satisfied, and apply Lemma \ref{lem cont2}.  It suffices to replace the function $b$ by $\alpha b$ for suitably chosen $\alpha$.  This completes the proof in the case that $\gamma = 0$ and $a \neq 0$.

Now suppose instead that $\gamma \neq 0$.  There exists a linear mapping $T: \R^m \to \R^d$ such that $T\gamma = a$.  Define functions $f_\ell(v) = \langle Te_\ell,v\rangle$. Then
   $$
   \sum_{\ell = 1}^m f_\ell (v) V_\ell(v,\psi) = \begin{pmatrix} BT^\ast v \\[1ex] \langle a,v \rangle \cos^2 \psi \end{pmatrix}.
   $$ 
and so 
   $$
   V_0(v,\psi) - \sum_{\ell = 1}^m f_\ell (v) V_\ell(v,\psi) = \begin{pmatrix} (A-BT^\ast) v \\[1ex] -1+(1-\kappa^2) \cos^2 \psi  - \zeta_2 \sin 2 \psi  \end{pmatrix}.
   $$    
Fix  $1 \le k \le m$ such that $\gamma_k \neq 0$.  Consider a piecewise continuous function $b:[0,t] \to \R$ to be chosen later, and consider the controlled path along the time dependent vector field
     $$
      V_0(v,\psi) - \sum_{\ell = 1}^m f_\ell (v) V_\ell(v,\psi)+ b(s)V_k(v,\psi).
      $$   
The $v$ component of \eqref{cont p} is now 
     $$
    \frac{dv(s)}{ds} = (A-BT^\ast)v(s) + b(s)Be_k ,\quad v(0) = 0
    $$
and this has solution
   $$
   v(t)  = \int_0^t b(s) \Bigl( e^{(t-s)(A-BT^\ast)}Be_k\Bigr)ds.
   $$
In order to satisfy $v(t) = 0$ we require 
  \begin{equation} \label{b cond2}
  \int_0^t b(s)\Bigl(e^{(t-s)(A-BT^\ast)} Be_k\Bigr)_i ds = 0, \quad i = 1,2,\ldots,d.
  \end{equation}
The $\psi$ component of \eqref{cont p} is now
   $$
   \frac{d \psi(s)}{ds} = - 1+(1-\kappa^2 +b(s) \gamma_k)\cos^2 \psi(s) - \zeta_2 \sin 2 \psi(s).
   $$   
In order to apply Lemma \ref{lem cont2} it suffices to choose a non-identically zero function $b$ satisfying \eqref{b cond2}, and the proof is completed as before. 
\end{proof}

\subsection{Transition probabilities and invariant measures}   We use the results of the previous two sections to obtain results about the diffusion process $\{(v(t),\psi(t)): t \ge 0\}$ on $M = \R^d \times (\R/(2 \pi \Z)$.

\s

\n{\it Proof of Proposition \ref{prop ergodic}}.
For ease of notation write $(v,\psi) = x$, and let $P_t(x,U) = \PP^x(x(t) \in U)$ denote the transition probability for the diffusion.     
The Lie algebra result in Proposition \ref{prop liealg} implies condition (E) of Ichihara and Kunita \cite{IK}.  By \cite[Theorem 3]{IK} there exists smooth $p:(0,\infty) \times M \times M \to [0,\infty)$ such that $P_t(x,U) = \int_U p(t,x,y)dy$.    
The stability (eigenvalue condition) for $A$ implies the existence of the stationary probability $\mu$ for $\{v(t): t \ge 0\}$, and hence the existence of at least one stationary probability $m$ for ${\cal A}$.  Then \cite[Theorem 4]{IK} implies $m$ has a smooth density $\rho(x)$. 

Fix an open set $U \subset M$, and let $t > 0$. The stationarity of $m$ implies
      $$
     m(U)  =  \int_M  P_t(x,U)\rho(x)dx.
     $$    
The support theorem of Stroock and Varadhan \cite{SVsupp} together with Proposition \ref{prop cont} implies $P_t(x,U) >0$ for all $x \in M$, and hence $m(U) >0$.  This implies that $m$ is unique and that $\mbox{supp}(m) = M$.  It follows (see \cite[Prop 5.1]{IK}) that $P_t(x,\cdot)$ is absolutely continuous with respect to $m$ for all $t >0$ and all $x \in M$.  

Birkhoff's ergodic theorem implies 
            $$
       \PP^x\left( \lim_{t \to \infty} \frac{1}{t} \int_0^t F(x(s))ds = \int_M Fdm \right) = 1
       $$
 for $m$-almost all $x \in M$.  Finally for fixed $x \in M$, by conditioning on behavior at time 1 
 we get
     \begin{align*}
     & \PP^x\left( \lim_{t \to \infty} \frac{1}{t} \int_0^t F(x(s))ds = \int_M Fdm \right) \\
    & \hspace{10ex} = \int \PP^y\left( \lim_{t \to \infty} \frac{1}{t} \int_0^t F(x(s))ds = \int_M Fdm \right)P_1(x,dy)\\
     & \hspace{10ex} = 1              
\end{align*}
because $P_1(x,\cdot)$ is absolutely continuous with respect to $m$ and 
    $$
       \PP^y\left( \lim_{t \to \infty} \frac{1}{t} \int_0^t F(x(s))ds = \int_M Fdm \right) = 1
       $$
 for $m$-almost all $y \in M$.    \qed

\section{Proofs for Section \ref{sec scaling}} \label{sec lamproof}

In the proofs we will assume that $a \in \R^d$ and $\gamma \in \R^m$ are not both zero.  The case when $a$ and $\gamma$ are both zero is equivalent to setting $\eps = 0$.  Then \eqref{phi xi eps} has non-random constant coefficients and an elementary eigenvalue calculation gives $\lambda(\eps) \equiv  -\zeta_2$.

\subsection{Khas'minskii's formula revisited}  \label{sec Khas rev}

Recall $\kappa_d = \sqrt{\kappa^2 - \zeta_2^2}$ denotes the damped frequency of the pendulum. 
 For the asymptotic analysis as $\eps \to 0$ it is convenient to replace $u(t) = \begin{pmatrix} \varphi(t) \\ \dot{\varphi}(t) \end{pmatrix} $ with $\tilde{u}(t) = \begin{pmatrix} \kappa_d \varphi(t) \\ \zeta_2 \varphi(t) + \dot{\varphi}(t) \end{pmatrix}$, so that $\varphi(t) = \tilde{u}_1(t)/\kappa_d$ and $\dot{\varphi}(t) = \tilde{u}_2(t) - (\zeta_2/\kappa_d \tilde{u}_1(t)$.  Then we get the 2-dimensional linear SDE
     \begin{equation}  \label{u2 xi}
     d\tilde{u}(t)
        = \begin{pmatrix} -\zeta_2 & \kappa_d \\[1ex] -\kappa_d + \dfrac{\eps}{\kappa_d} \langle a,v(t)\rangle  & - \zeta_2 \end{pmatrix} \tilde{u}(t)  dt  +\frac{\eps}{\kappa_d}\sum_{\ell = 1}^m  \begin{pmatrix} 0 & 0 \\[1ex] \gamma_\ell & 0\end{pmatrix}\tilde{u}(t) dW_\ell(t).
               \end{equation} 
Write $\tilde{u}(t) = \|\tilde{u}(t)\|\begin{pmatrix} \cos \tilde{\psi}(t) \\ \sin \tilde{\psi}(t) \end{pmatrix}$.  The transformation $\psi(t) \mapsto \tilde{\psi}(t)$ is given by a diffeomorphism of $\R/(2 \pi \Z)$, so the ergodicity result Proposition \ref{prop ergodic} applies equally well to the diffusion $\{(v(t),\tilde{\psi}(t)): t \ge 0\}$.  Also $\bigl|\log \|u(t)\| - \log \|\tilde{u}(t)\|\bigr|$ is bounded.   Thus the method used to obtain formula \eqref{lam Khas} for $\lambda$ in Corollary \ref{cor ergodic} is equally valid when applied to the process $\{\tilde{u}(t): t \ge 0\}$.     

   Applying It\^{o}'s formula to \eqref{u2 xi} gives
  $$
  d \log\|\tilde{u}(t)\| =  \widetilde{Q}^\eps(v(t),\tilde{\psi}(t))dt + \sum_{\ell = 1}^m\frac{\eps \gamma_\ell}{\kappa_d} \sin \tilde{\psi}(t) \cos \tilde{\psi}(t)\, dW_\ell(t)
  $$
and 
  \begin{equation} \label{tilde psi}
   d \tilde{\psi}(t) = \tilde{h}^\eps(v(t),\tilde{\psi}(t))dt +\sum_{\ell = 1}^m \frac{\eps \gamma_\ell}{\kappa_d} \cos^2 \tilde{\psi}(t) \,dW_\ell(t)
   \end{equation}
where
  $$
  \widetilde{Q}^\eps(v,\psi) =  -\zeta_2 +\frac{\eps}{\kappa_d}\langle a,v\rangle \sin\psi \cos \psi  +\frac{\eps^2\|\gamma\|^2}{2\kappa_d^2} \cos^2 \psi \cos 2\psi
  $$
and
   $$
   \tilde{h}^\eps(v,\psi) = -\kappa_d +\frac{\eps}{\kappa_d}\langle a,v\rangle \cos^2 \psi  - \frac{\eps^2 \|\gamma\|^2}{\kappa_d^2} \sin \psi \cos^3 \psi.
   $$ 
 Repeating the arguments in Section \ref{sec Khas} leading up to Corollary \ref{cor ergodic} we get
     \begin{equation} \label{Khas eps}
   \lambda(\eps) = \int_M \widetilde{Q}^\eps(v,\psi) d\tilde{m}^\eps(v,\psi).
   \end{equation}
where $\tilde{m}^\eps$ is the unique invariant probability measure for the diffusion $\{(v(t),\tilde{\psi}(t)): t \in \R\}$ on $M$ with generator $\widetilde{\cal A}^\eps$ given by (\ref{v},\ref{tilde psi}).  For convenience of notation we drop tildes for the rest of this section.

\s

\n{\it Proof of Proposition \ref{prop lam upper}}.   For this proof we take $\eps = 1$.  We have 
   \begin{align*}
    Q(v,\psi) & =  -\zeta_2 +\frac{1}{\kappa_d}\langle a,v\rangle \sin\psi \cos \psi  +\frac{\|\gamma\|^2}{2\kappa_d^2} \cos^2 \psi \cos 2\psi \\
     & \le -\zeta_2 +\frac{1}{2\kappa_d}|\langle a,v\rangle|  +\frac{\|\gamma\|^2}{2\kappa_d^2}.
     \end{align*} 
and so
  \begin{align*}
  \lambda = \int_M Q(v,\psi) dm(v,\psi) & \le -\zeta_2 + \frac{1}{2 \kappa_d}\int_{\R^d} |\langle a,v \rangle|d\mu(v) + \frac{\|\gamma\|^2}{2\kappa_d^2}\\
  & \le -\zeta_2 + \frac{1}{2 \kappa_d}\left(\int_{\R^d} \langle a,v \rangle^2 d\mu(v)\right)^{1/2} + \frac{\|\gamma\|^2}{2\kappa_d^2}\\
  & =  -\zeta_2 + \frac{\sqrt{\langle a,Ra \rangle}}{2 \kappa_d} + \frac{\|\gamma\|^2}{2\kappa_d^2}.
  \end{align*} 
For the block and pendulum we have $R = \dfrac{\nu^2}{4 \zeta_1 }\begin{pmatrix} 1/\chi^2 & 0 \\ 0 & 1 \end{pmatrix}$ and $a = \begin{pmatrix} -\chi^2 \\-2\zeta_1 \end{pmatrix}$ and $\gamma = \nu$. \qed

\subsection{Adjoint expansion}  \label{sec adj}    
  Instead of evaluating the right side of \eqref{Khas eps} directly, we will consider the adjoint equation ${\cal A}^\eps F^\eps = Q^\eps-\lambda(\eps)$ using the asymptotic expansion method originated by Arnold, Papanicolaou and Wihstutz \cite{APW86}.    
The integrand $Q^\eps$ in \eqref{Khas eps} can be written $Q^\eps = -\zeta_2+ \eps Q_1+\eps^2 Q_2$ where   
    $$
    Q_1(v,\psi) = \frac{1}{\kappa_d}\langle a,v\rangle \sin \psi \cos \psi \qquad \mbox{ and  } \qquad
       Q_2(v,\psi) = \frac{\|\gamma\|^2}{2 \kappa_d^2}\cos ^2 \psi \cos 2 \psi.
    $$
The diffusion process $\{v(t): t \in \R\}$ given by \eqref{v} has generator ${\cal G}$ given by
   \begin{equation} \label{G}
    {\cal G}f(v) = \sum_{j=1}^d (Av)_j \frac{\partial f}{\partial v_j}(v) + \frac{1}{2} \sum_{j,k=1}^d (BB^\ast)_{jk} \frac{\partial^2 f}{\partial v_j \partial v_k}.
    \end{equation}  
 For $j = 1,\ldots,d$ we have 
 \begin{equation}  \label{dvdpsi}
   dv_j(t)d\psi(t)  =  \sum_{\ell = 1}^m (Be_\ell)_j \left(\frac{\eps \gamma_\ell}{\kappa_d} \cos^2 \psi(t)\right)
   = \frac{\eps}{\kappa_d}\cos^2\psi(t)(B\gamma)_j.
  \end{equation}
Therefore the generator ${\cal A}^\eps$ of the diffusion $\{(v(t),\psi(t)): t \in \R \}$ can be written ${\cal A}^\eps = {\cal A}_0 + \eps {\cal A}_1+\eps^2{\cal A}_2 $ where 
      $$
      {\cal A}_0 = {\cal G}-\kappa_d \frac{\partial}{\partial \psi}  
      $$
 and 
  $$
  {\cal A}_1 = \frac{1}{\kappa_d} \cos^2 \psi \left(\langle a,v\rangle\frac{\partial}{\partial \psi}+  \sum_{j=1}^d (B\gamma)_j \frac{\partial^2}{\partial \psi \partial v_j}\right)
  $$
and 
   $$
  {\cal A}_2 = \frac{\|\gamma\|^2}{\kappa_d^2}\left(-\sin \psi \cos^3 \psi  \frac{\partial}{\partial \psi} +  \cos^4 \psi \frac{\partial^2}{\partial \psi^2}\right).
  $$      
Expanding the adjoint equation ${\cal A}^\eps F^\eps = Q^\eps-\lambda(\eps)$
as
    $$    
    ({\cal A}_0+\eps {\cal A}_1+\eps^2 {\cal A}_2)(\eps F_1+\eps^2F_2+\cdots) = (-\zeta_2+\eps Q_1+\eps^2 Q_2) - (\lambda_0+ \eps \lambda_1 + \eps^2\lambda_2+\cdots)
    $$ 
and equating powers of $\eps$ we get $\lambda_0 = -\zeta_2$ and 
    \begin{align}
       {\cal A}_0 F_1 & = Q_1-\lambda_1  \label{adj1}\\
       {\cal A}_0F_2 +{\cal A}_1 F_1 & = Q_2-\lambda_2 \label{adj2}\\
       {\cal A}_0F_3 +{\cal A}_1 F_2 + {\cal A}_2F_1 & = -\lambda_3 \label{adj3},
  \end{align}
and so on.  In the next three sections we will solve these equations, finding explicit values for $\lambda_1$, $\lambda_2$ and $\lambda_3$.  We will find an explicit formula for $F_1$ and characterizations of the functions $F_2$ and $F_3$ which are sufficiently precise so as to enable a rigorous asymptotic estimate to be made in Section \ref{sec proof}.

\begin{remark}  Solving the system recursively, all the equations will be of the form 
      $$
      {\cal A}_0 F(v,\psi) = G(v,\psi) - \lambda
      $$
where $G$ is given and $\lambda$ and $F$ are to be found.  For an exponentially ergodic Markov process with generator ${\cal L}$, semigroup $\{P_t: t \ge 0\}$, and invariant probability $m$, then $P_tG \to \int Gdm$
exponentially fast as $t \to \infty$.  Then taking $\lambda = \int Gdm$ and 
    \begin{equation} \label{badF}
    F = -\int_0^\infty (P_tG-\lambda)dt
    \end{equation}
solves ${\cal L}F = G-\lambda$.  But this is not the case for ${\cal A}_0$; for example $G(v,\psi) = \cos \psi$ has $\lambda = \int Gdm = 0$ but $P_t G(v,\psi) = \cos (\psi - \kappa_d t)$ so that the integral for $F$ does not converge.  This causes some extra work, especially when solving the second and third equations.  The construction of $D_n$ in Lemmas \ref{lem CtoDE} and \ref{lem CtoDE3} is related to, but distinctly different from, the construction in \eqref{badF}. 
\end{remark}

\subsubsection{The first equation} \label{sec first}
We need to find $\lambda_1$ and $F_1(v,\psi)$ so that  
$$ {\cal A}_0F_1(v,\psi) = Q_1(v,\psi) - \lambda_1
  $$
where 
 $$Q_1(v,\psi) =  \frac{1}{2\kappa_d}\langle a,v\rangle \sin 2 \psi  = -\frac{1}{2 \kappa_d}\Re\Bigl( i \langle a,v\rangle e^{i 2\psi}\Bigr).
   $$
Define 
   \begin{equation} \label{F1}
    F_1(v,\psi) = \frac{1}{2 \kappa_d}\Re\biggl( ie^{i2\psi} \int_0^\infty \langle a,e^{tA}v\rangle e^{-i 2 \kappa_d t}dt \biggr)
   \end{equation} 
Here $\Re$ denotes the real part of the complex valued expression in parentheses.  Since the eigenvalues of $A$ have negative real parts, the integrand decays exponentially quickly with $t$ so that $F_1(v,\psi)$ is well-defined.  Moreover it is linear in $v$ and we may differentiate with respect to $v$ inside the integral.  We get 
    \begin{align}
    {\cal A}_0 F_1(v,\psi) & =  \frac{1}{2 \kappa_d}\Re\biggl( ie^{i2\psi}\int_0^\infty \Bigl(\langle a,e^{tA}Av\rangle - i2 \kappa_d \langle a,e^{tA}v\rangle \Bigr) e^{-i 2\kappa_d t}dt \biggr) \nonumber \\
    & =  \frac{1}{2 \kappa_d}\Re\biggl( ie^{i2\psi}\int_0^\infty \frac{\partial}{\partial t}\Bigl(\langle a,e^{tA}v\rangle e^{-i 2\kappa_d t}\Bigr)dt \biggr) \nonumber \\
    & =  - \frac{1}{2 \kappa_d}\Re\Bigl( ie^{i2\psi} \langle a,v\rangle \Bigr) \nonumber \\
    & = Q_1(v,\psi). \label{F1Q1}
     \end{align} 
Therefore $\lambda_1 = 0$.
    For ease of notation we write 
 \begin{equation}  \label{b}
    b = \biggl( \int_0^\infty e^{tA^\ast}e^{-i2\kappa_d t} dt \biggr)a \in \mathbb{C}^d.
    \end{equation}   
so that
 \begin{equation}
     F_1(v,\psi) =  \frac{1}{2 \kappa_d}\Re\biggl( ie^{i2\psi}\int_0^\infty \langle e^{tA^\ast}a,v\rangle e^{-i 2 \kappa_d t}dt \biggr) =  \frac{1}{2 \kappa_d} \Re\Bigl( ie^{i2\psi} \langle b, v\rangle \Bigr). \label{F1b}
   \end{equation}

\subsubsection{The second equation} \label{sec second}
We need to find $\lambda_2$ and $F_2(v,\psi)$ such that
      $$
     {\cal A}_0F_2(v,\psi) = -{\cal A}_1F_1(v,\psi) + Q_2(v,\psi) - \lambda_2.
     $$
   Using \eqref{F1b} we have 
    \begin{align*}
    {\cal A}_1F_1(v,\psi) & =  \frac{1}{\kappa_d }\cos^2 \psi \Bigl(\langle a,v \rangle + \sum_{j=1}^d (B\gamma)_j \frac{\partial}{\partial v_j}\Bigr) \frac{\partial F_1}{\partial \psi}\\
    & =  -\frac{1}{\kappa_d ^2}\cos^2 \psi \,\Re\Bigl(  e^{i2 \psi}\bigl(\langle a,v\rangle \langle b,v\rangle  + \langle b,B\gamma \rangle \bigr) \Bigr)\\
     & =  -\frac{1}{4\kappa_d ^2}\Re\Bigl(  \bigl(1+2e^{i 2\psi} + e^{i4\psi} \bigr)\bigl(\langle a,v\rangle \langle b,v\rangle  + \langle b,B\gamma \rangle \bigr) \Bigr).
    \end{align*}
Also
 $$
    Q_2(v,\psi)   =   \frac{\|\gamma\|^2}{2 \kappa_d^2}\cos ^2 \psi \cos 2 \psi =  \frac{\|\gamma\|^2}{8 \kappa_d^2}\Re \bigl(1+2e^{i2\psi} + e^{i 4\psi}\bigr).
   $$
So we need to solve
   \begin{equation}
   {\cal A}_0F_2(v,\psi)   =  \frac{1}{4\kappa_d^2}\Re\Bigl(\bigl(1+2e^{i 2\psi} + e^{i4\psi} \bigr)\bigl(\langle a,v\rangle \langle b,v\rangle  + \langle b,B\gamma \rangle  + \frac{\|\gamma\|^2}{2}\bigr)\Bigr)  - \lambda_2. \label{adj2bis}
   \end{equation}    

Let ${\cal F}_2$ denote the space of functions $M \to \mathbb{C}$ consisting of complex linear combinations of functions of the form $e^{i n \psi}C(v,v)$ and $e^{i n \psi}$ where $C$ is bilinear and $n= 0,2,4$.  The formulas above show that $-{\cal A}_1F_1(v,\psi) + Q_2(v,\psi) \in {\cal F}_2$.  Then we can break down the problem of solving \eqref{adj2bis} into a collection of (complex-valued) problems of the form 
      \begin{equation} \label{AFCn}
       {\cal A}_0 F(v,\psi) =e^{i n \psi} C(v,v) - \lambda
      \end{equation}
and  
      \begin{equation} \label{AFn}
      {\cal A}_0F(v,\psi) = e^{in\psi} - \lambda
      \end{equation}   
The problem \eqref{AFn} is trivial: if $n \neq 0$ take $F(v,\psi) = (i/n \kappa_d)e^{i n \psi}$ and $\lambda = 0$, and if $n= 0$ take $F(v,\psi) = 0$ and $\lambda = 1$.  The problem \eqref{AFCn} is more interesting and the following lemma, generalizing the calculation in \eqref{F1Q1} to the bilinear setting, will be useful.
 
\begin{lemma} \label{lem CtoDE}  Suppose $C: \R^d \times \R^d \to \mathbb{C}$ is bilinear.  For $n \ge 0$ define
    \begin{equation} \label{Dn}
    D_n(v^{(1)},v^{(2)}) = - \int_0^\infty C( e^{tA}v^{(1)},e^{tA}v^{(2)})e^{-i n \kappa_d t}dt.
   \end{equation}
Then $D_n$ is well defined and bilinear and there exists $E_n \in \mathbb{C}$ such that 
     $$
     {\cal A}_0\bigl(e^{i n \psi }D_n(v,v)\bigr) = e^{in \psi} C(v,v) + e^{i n  \psi} E_n.
     $$
Moreover
 \begin{equation} \label{E0}
 E_0 = -\sum_{\ell=1}^m \int_0^\infty C(e^{tA}Be_\ell, e^{tA}Be_\ell)dt  = -\sum_{j,k=1}^d C(e_j,e_k)R_{jk}= -\int_{\R^d} C(v,v)d\mu(v)
\end{equation} 
 where $R$ is the variance matrix for the invariant probability measure $\mu$. 
 \end{lemma}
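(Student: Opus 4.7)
The plan is a direct computation that exploits the bilinearity of $C$ to turn the action of ${\cal A}_0$ on $D_n(v,v)$ into identities for $D_n$ itself. Well-definedness and bilinearity of $D_n$ come from the eigenvalue assumption on $A$: there exist $M,\delta>0$ with $\|e^{tA}\|\le Me^{-\delta t}$, so $|C(e^{tA}v^{(1)},e^{tA}v^{(2)})|\le\|C\|M^2e^{-2\delta t}\|v^{(1)}\|\|v^{(2)}\|$ and the defining integral converges absolutely; bilinearity of $D_n$ is inherited from $C$ by linearity of the integral.

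To compute ${\cal A}_0(e^{in\psi}D_n(v,v))$, write ${\cal A}_0 = {\cal G} - \kappa_d\partial_\psi$ and set $\phi(v) = D_n(v,v)$. Bilinearity of $D_n$ gives $\partial_{v_j}\phi(v) = D_n(e_j,v)+D_n(v,e_j)$ and $\partial_{v_j}\partial_{v_k}\phi(v) = D_n(e_j,e_k)+D_n(e_k,e_j)$. The drift term is therefore
\begin{equation*}
\sum_{j=1}^d (Av)_j\partial_{v_j}\phi(v) = D_n(Av,v)+D_n(v,Av) = \frac{d}{ds}D_n(e^{sA}v,e^{sA}v)\Big|_{s=0}.
\end{equation*}
The key identity comes from the substitution $r=t+s$ in \eqref{Dn}, which yields $D_n(e^{sA}v,e^{sA}v) = -e^{in\kappa_d s}\int_s^\infty C(e^{rA}v,e^{rA}v)e^{-in\kappa_d r}\,dr$; differentiating at $s=0$ produces the boundary term $C(v,v)$ together with $in\kappa_d D_n(v,v)$ from the prefactor. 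For the diffusion term, using $(BB^\ast)_{jk}=\sum_\ell(Be_\ell)_j(Be_\ell)_k$ and bilinearity, one finds $\tfrac12\sum_{j,k}(BB^\ast)_{jk}\partial_{v_j}\partial_{v_k}\phi(v) = \sum_{\ell=1}^m D_n(Be_\ell,Be_\ell)$.

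Combining the pieces, ${\cal G}\phi(v) = in\kappa_d D_n(v,v)+C(v,v)+\sum_\ell D_n(Be_\ell,Be_\ell)$, while $-\kappa_d\partial_\psi(e^{in\psi}D_n(v,v)) = -in\kappa_d e^{in\psi}D_n(v,v)$. The $in\kappa_d D_n$ contributions cancel exactly, proving the identity with $E_n := \sum_\ell D_n(Be_\ell,Be_\ell)$. For the special case $n=0$, expanding $C(e^{tA}Be_\ell,e^{tA}Be_\ell) = \sum_{j,k}C(e_j,e_k)(e^{tA}Be_\ell)_j(e^{tA}Be_\ell)_k$ by bilinearity and recognising $\sum_\ell\int_0^\infty(e^{tA}Be_\ell)_j(e^{tA}Be_\ell)_k\,dt = R_{jk}$ from \eqref{R} gives $E_0 = -\sum_{j,k}C(e_j,e_k)R_{jk}$; the final equality with $-\int_{\R^d}C(v,v)d\mu(v)$ is the standard Gaussian second-moment identity since $\mu$ is mean-zero with covariance $R$. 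No step here is delicate; the only point demanding care is the boundary contribution when differentiating $D_n(e^{sA}v,e^{sA}v)$ at $s=0$, which is precisely what recovers the $C(v,v)$ on the right-hand side.
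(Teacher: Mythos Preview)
Your proof is correct and follows essentially the same approach as the paper's own proof. The only cosmetic difference is that the paper groups the drift and $-\kappa_d\partial_\psi$ terms together and recognises the integrand of $D_n$ as a total $t$-derivative, whereas you compute the drift via $\frac{d}{ds}D_n(e^{sA}v,e^{sA}v)\big|_{s=0}$ and cancel the $in\kappa_d D_n$ term against the $\psi$-derivative afterwards; these are the same computation repackaged.
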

  

\begin{proof}  $D_n$ is well defined because the eigenvalues of $A$ have negative real parts.  We will break down the calculation of ${\cal A}_0\bigl(e^{i n \psi }D_n(v,v)\bigr)$ into first and second order derivatives.   
We have 
   \begin{align*}
  \lefteqn{ \Bigl(\sum_{j=1}^d (Av)_j \frac{\partial}{\partial v_j} - \kappa_d \frac{\partial}{\partial \psi}\Bigr)\bigl(e^{i n \psi }D_n(v,v)\bigr)} \hspace{5ex}
   \\ 
   &  =   e^{in  \psi}\Bigl(D_n(Av,v)+D_n(v,Av) - in\kappa_d  D_n(v,v)\Bigr)\\
    &   =
   - e^{in\psi}\int_0^\infty \Bigl(C(Ae^{tA}v,e^{tA}v)+C(e^{tA}v,Ae^{tA}v) - in \kappa_d  C(e^{tA}v,e^{tA}v)\Bigr)e^{-in \kappa_d  t}dt   \\
    &  =
   - e^{in\psi}\int_0^\infty \frac{\partial}{\partial t}\Bigl(C(e^{tA}v,e^{tA}v)e^{-in \kappa_d t}\Bigr)dt\\
   &   = e^{in\psi}C(v,v),
    \end{align*}
and
   $$
    \frac{1}{2} \sum_{j,k = 1}^d (BB^\ast)_{jk} \frac{\partial^2 D_n}{\partial v_j \partial v_k}(v,v) 
     = \frac{1}{2} \sum_{j,k = 1}^d (BB^\ast)_{jk}\Bigl(D_n(e_j,e_k)+D_n(e_k,e_j)\Bigr):=E_n
       $$ 
where $e_j$ and $e_k$ denote the $j$th and $k$th standard unit vectors in $\R^d$.  
Together we have 
  \begin{align*}
     {\cal A}_0\bigl(e^{i n \psi }D_n(v,v)\bigr)
        & =  \Bigl(\sum_{j=1}^d (Av)_j \frac{\partial}{\partial v_j} - \kappa_d \frac{\partial}{\partial \psi}\Bigr)\bigl(e^{i n \psi }D_n(v,v)\bigr) +  \frac{e^{i n \psi}}{2} \sum_{j,k = 1}^d (BB^\ast)_{jk} \frac{\partial^2 D_n}{\partial v_j \partial v_k}(v,v) \\
        & =  e^{in  \psi}C(v,v) + e^{in \psi }E_n.
 \end{align*}
Finally
  \begin{align*}
  E_0 & =  -\frac{1}{2} \sum_{j,k = 1}^d (BB^\ast)_{jk}\int_0^\infty \Bigl(C(e^{tA}e_j,e^{tA}e_k)+C(e^{tA}e_k,e^{tA}e_j)\Bigr)dt\\
  & =   -\sum_{\ell = 1}^m \int_0^\infty C(e^{tA}Be_\ell,e^{tA}Be_\ell)dt\\
   & =  -\sum_{j,k=1}^d C(e_j,e_k)\sum_{\ell = 1}^m \int_0^\infty \bigl(e^{tA}Be_\ell\bigr)_j\bigl(e^{tA}Be_\ell\bigr)_k dt,
  \end{align*}
and the rest of \eqref{E0} follows immediately from \eqref{R}. 
\end{proof}

\s

The lemma implies that at the cost of translating the function $F$ by an element in ${\cal F}_2$ we can reduce a problem of the form \eqref{AFCn} to a problem of the form \eqref{AFn}.  For $n \neq 0$ we can solve \eqref{AFCn} with $F \in {\cal F}_2$ and $\lambda = 0$.  The remaining case of interest is \eqref{AFCn} with $n= 0$, which has a solution with $F \in {\cal F}_2$ and $\lambda = -E_0$.   Since the right side of \eqref{adj2bis} contains a bilinear term with $C(v,v) = \langle a,v\rangle \langle b,v\rangle$ we note that the corresponding value of $-E_0$ from \eqref{E0} is 
    $$
   -E_0 = \sum_{j,k=1}^d \langle a,e_j\rangle \langle b,e_j\rangle R_{jk} = \sum_{j,k=1}^d a_jb_kR_{jk} = \langle b,Ra \rangle.
    $$ 
Putting all the calculations together, and going through the right side of \eqref{adj2bis} term by term, we obtain the following result.
\begin{proposition}  \label{prop adj2}The equation \eqref{adj2bis} has a solution with $F_2 = \Re(\widehat{F}_2)$ for some $\widehat{F}_2 \in {\cal F}_2$ and 
    \begin{equation} \label{lam2b}
    \lambda_2 =\frac{1}{4\kappa_d^2}\Bigl( \langle \Re( b),Ra \rangle + \langle \Re(b),B\gamma \rangle + \frac{\|\gamma\|^2}{2} \Bigr).
    \end{equation}
\end{proposition}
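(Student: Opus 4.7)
The plan is to reduce \eqref{adj2bis} to a finite list of independent subproblems of the forms \eqref{AFn} and \eqref{AFCn}, each solved by the elementary Fourier-mode inversion from Section \ref{sec second} or by Lemma \ref{lem CtoDE}. Since ${\cal A}_0$ commutes with complex conjugation and preserves the complex span of functions of the form $e^{in\psi}$ and $e^{in\psi}C(v,v)$, it is enough to find a complex $\widehat{F}_2 \in {\cal F}_2$ solving the complex-valued equation
$$
{\cal A}_0 \widehat{F}_2 = \frac{1}{4\kappa_d^2}(1 + 2e^{i2\psi} + e^{i4\psi})(C(v,v) + c) - \widehat{\lambda}_2,
$$
where $C(v^{(1)},v^{(2)}) := \langle a, v^{(1)}\rangle \langle b, v^{(2)}\rangle$ is bilinear and $c := \langle b, B\gamma\rangle + \tfrac{1}{2}\|\gamma\|^2$ is a (complex) constant, and then set $F_2 = \Re \widehat{F}_2$ and $\lambda_2 = \Re \widehat{\lambda}_2$.

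Next I distribute the product and treat each of the six resulting terms separately. For $n \in \{2,4\}$, the constant piece $c\, e^{in\psi}/(4\kappa_d^2)$ is solved trivially via \eqref{AFn} with no $\lambda$ contribution, and the bilinear piece $e^{in\psi}C(v,v)/(4\kappa_d^2)$ is solved by $e^{in\psi}D_n(v,v)/(4\kappa_d^2)$ using Lemma \ref{lem CtoDE}; the leftover $E_n e^{in\psi}/(4\kappa_d^2)$ is again absorbed into a multiple of $e^{in\psi}$, so these modes contribute nothing to $\widehat{\lambda}_2$. For $n = 0$, the situation is different: the constant $c/(4\kappa_d^2)$ lies outside the range of ${\cal A}_0$ on ${\cal F}_2$ and must be placed directly into $\widehat{\lambda}_2$, while the bilinear piece $C(v,v)/(4\kappa_d^2)$ is handled by $D_0(v,v)/(4\kappa_d^2)$ via Lemma \ref{lem CtoDE}, and its residual $E_0/(4\kappa_d^2)$ must likewise be routed into $\widehat{\lambda}_2$.

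Collecting the two $n = 0$ absorptions gives $\widehat{\lambda}_2 = (-E_0 + c)/(4\kappa_d^2)$, and formula \eqref{E0} evaluates $-E_0 = \sum_{j,k} a_j b_k R_{jk} = \langle b, Ra\rangle$, so
$$
\widehat{\lambda}_2 = \frac{1}{4\kappa_d^2}\Bigl(\langle b, Ra\rangle + \langle b, B\gamma\rangle + \tfrac{1}{2}\|\gamma\|^2\Bigr).
$$
Taking real parts and using that $a$, $B\gamma$, and $R$ are real converts each occurrence of $b$ into $\Re(b)$ and delivers the claimed expression for $\lambda_2$. The function $\widehat{F}_2$ is by construction a finite sum of terms $e^{in\psi}D_n(v,v)$ and $e^{in\psi}$ with $n \in \{0,2,4\}$, hence lies in ${\cal F}_2$. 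There is no real obstacle here; the only conceptual point, which Lemma \ref{lem CtoDE} was designed to encapsulate, is that $e^{in\psi}$ with $n \neq 0$ is in the image of ${\cal A}_0$ restricted to ${\cal F}_2$ while the constant mode $n = 0$ is not, so every contribution to $\widehat{\lambda}_2$ comes from that single Fourier mode.
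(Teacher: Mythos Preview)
Your argument is correct and follows essentially the same route as the paper: decompose the right-hand side of \eqref{adj2bis} into Fourier modes $e^{in\psi}$ with $n\in\{0,2,4\}$, handle the bilinear pieces via Lemma~\ref{lem CtoDE}, absorb the residual $E_n e^{in\psi}$ for $n\neq 0$ back into the $e^{in\psi}$ solution of \eqref{AFn}, and read off $\lambda_2$ from the $n=0$ mode. Your explicit passage through a complex $\widehat{\lambda}_2$ and the observation that ${\cal A}_0$ has real coefficients (so commutes with conjugation) make the real-part step cleaner than in the paper's terse ``term by term'' summary, but the substance is identical.
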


Finally, using \eqref{b} and \eqref{hatS} we have 
   $$\langle \Re(b),Ra\rangle = \Re\left\langle a, \int_0^\infty e^{tA} e^{-i 2 \kappa_d t}Ra \right\rangle
     = \pi \langle a, \widehat{S}_A(2 \kappa_d)Ra \rangle 
   $$ 
and similarly
  $$\langle \Re(b),B\gamma \rangle 
     = \pi \langle a, \widehat{S}_A(2\kappa_d )B\gamma \rangle, 
   $$ 
so that 
  \begin{equation} \label{lam2 bis}
   \lambda_2 = \frac{\pi}{4 \kappa_d^2} \Bigl( \langle a,\widehat{S}_A(2\kappa_d )Ra\rangle  + \langle a,\widehat{S}_A(2\kappa_d) B\gamma \rangle +\frac{\|\gamma\|^2}{2\pi}\Bigr).
  \end{equation}
This is the formula for $\lambda_2(2 \kappa_d)$ in Theorem \ref{thm lam}, but the asymptotic behavior of $\lambda(\eps)$ has not yet been proved.  

 \subsubsection{The third equation} \label{sec third}
Next we consider 
  \begin{equation}  \label{adj3bis}
 {\cal A}_0F_3 = -{\cal A}_1 F_2 - {\cal A}_2F_1   -\lambda_3
  \end{equation}
Let ${\cal F}_3$ denote the space of functions $M \to \mathbb{C}$ consisting of complex linear combinations of functions of the form $e^{i n \psi}C(v,v,v)$ and $e^{i n \psi}D(v)$ where $C$ is trilinear and $D$ is linear and $n= 0,2,4,6$.  The exact formula \eqref{F1b} for $F_1$ and the characterization of $F_2$ in Proposition \ref{prop adj2} together imply that $ -{\cal A}_1 F_2 - {\cal A}_2F_1$ is the real part of a function in ${\cal F}_3$. 

\begin{lemma} \label{lem CtoDE3}  Suppose $C: (\R^d)^3 \to \mathbb{C}$ is trilinear.  For $n \ge 0$ define
    $$
    D_n(v^{(1)},v^{(2)},v^{(3)}) = - \int_0^\infty C( e^{tA}v^{(1)},e^{tA}v^{(2)},e^{tA}v^{(3)})e^{-i n \kappa_d t}dt.
    $$
Then $D_n$ is well defined and trilinear and there exists a linear mapping $E_n: \R^d \to \mathbb{C}$ such that 
     $$
     {\cal A}_0\bigl(e^{i n \psi }D_n(v,v,v)\bigr) = e^{in \psi} C(v,v,v) + e^{i n  \psi} E_n(v).
     $$
 \end{lemma}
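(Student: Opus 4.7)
The proof proposal is to mimic the bilinear argument in Lemma \ref{lem CtoDE} almost verbatim, accounting only for the new feature that taking two $v$-derivatives of a trilinear form leaves a form that is linear rather than constant in $v$.

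First I would check that $D_n$ is well defined: since the eigenvalues of $A$ have strictly negative real parts, the integrand $C(e^{tA}v^{(1)},e^{tA}v^{(2)},e^{tA}v^{(3)})e^{-in\kappa_d t}$ decays exponentially in $t$, uniformly on bounded sets of $(v^{(1)},v^{(2)},v^{(3)})$, and trilinearity of $D_n$ follows from trilinearity of $C$ and linearity of the integral. This also justifies differentiating under the integral sign in what follows.

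Next I would split ${\cal A}_0 = \bigl(\sum_j (Av)_j \partial_{v_j} - \kappa_d \partial_\psi\bigr) + \tfrac12 \sum_{j,k}(BB^\ast)_{jk}\partial_{v_j}\partial_{v_k}$ and compute the two parts separately. For the first-order part, applying the product rule and differentiating $D_n(v,v,v)$ three times with $A$ appearing once in each slot gives
\begin{align*}
\Bigl(\sum_j (Av)_j \partial_{v_j} - \kappa_d \partial_\psi\Bigr)\bigl(e^{in\psi}D_n(v,v,v)\bigr)
 &= e^{in\psi}\bigl(D_n(Av,v,v)+D_n(v,Av,v)+D_n(v,v,Av)-in\kappa_d D_n(v,v,v)\bigr)\\
 &= -e^{in\psi}\int_0^\infty \frac{\partial}{\partial t}\Bigl(C(e^{tA}v,e^{tA}v,e^{tA}v)e^{-in\kappa_d t}\Bigr)dt\\
 &= e^{in\psi}C(v,v,v),
\end{align*}
exactly as in the bilinear case. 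The boundary term at infinity vanishes by the eigenvalue hypothesis, and at $t=0$ it produces $C(v,v,v)$.

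For the second-order part, two derivatives of the trilinear form $v\mapsto D_n(v,v,v)$ kill one copy of $v$ and leave a linear function of $v$: writing it out,
$$
\partial_{v_j}\partial_{v_k}D_n(v,v,v) = D_n(e_j,e_k,v)+D_n(e_k,e_j,v)+D_n(e_j,v,e_k)+D_n(e_k,v,e_j)+D_n(v,e_j,e_k)+D_n(v,e_k,e_j).
$$
Defining
$$
E_n(v) := \tfrac12\sum_{j,k=1}^d (BB^\ast)_{jk}\,\partial_{v_j}\partial_{v_k}D_n(v,v,v),
$$
this is manifestly linear in $v$, and combining with the first-order computation yields ${\cal A}_0(e^{in\psi}D_n(v,v,v)) = e^{in\psi}C(v,v,v)+e^{in\psi}E_n(v)$, as required.

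There is no real obstacle beyond careful bookkeeping; the slight novelty over Lemma \ref{lem CtoDE} is that the remainder $E_n$ is now a linear rather than constant function, which is exactly why the lemma is stated that way. I would not bother with an explicit integral formula for $E_n$ analogous to \eqref{E0}, since the subsequent argument in Section \ref{sec third} only needs its existence and linearity in $v$ to reduce the third equation to problems of the earlier types \eqref{AFCn}, \eqref{AFn}, plus a similar analysis for the new trilinear terms $e^{in\psi}D(v)$ which can be handled by yet another variation of the same telescoping integral trick.
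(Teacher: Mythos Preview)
Your proposal is correct and follows exactly the approach the paper intends: the paper's own proof simply reads ``The method of proof is the same as for Lemma~\ref{lem CtoDE}, and we omit the details,'' and you have carried out precisely that extension to the trilinear case, including the key observation that the second-order term now leaves a linear remainder $E_n(v)$ rather than a constant. One small slip in your closing remarks: the residual terms $e^{in\psi}D(v)$ you mention are \emph{linear}, not trilinear.
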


\begin{proof}  The method of proof is the same as for Lemma \ref{lem CtoDE}, and we omit the details.  
\end{proof}

The lemma implies that at the cost of translating the function $F$ by an element of ${\cal F}_3$ we can reduce a problem of the form
     \begin{equation} \label{AFCn3}
       {\cal A}_0 F(v,\psi) =e^{i n \psi} C(v,v,v)
      \end{equation}
for $n= 0,2,4,6$ into a problem of the form 
      \begin{equation} \label{AFEn3}
       {\cal A}_0 F(v,\psi) =e^{i n \psi} E_n(v).
      \end{equation}
The method used in Section \ref{sec first} to obtain an explicit solution of \eqref{adj1} can be equally applied here to obtain an explicit solution for \eqref{AFEn3} with $F \in {\cal F}_3$.   Repeating the term by term approach used in the previous section we get the next result.  

\begin{proposition}  \label{prop adj3}The equation \eqref{adj3bis} has a solution with $F_3 = \Re(\widehat{F}_3)$ for some $\widehat{F}_3 \in {\cal F}_3$ and $\lambda_3 = 0$. 
   \end{proposition}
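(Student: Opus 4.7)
My plan is to follow the same template used for solving \eqref{adj2bis}, but now at the next order in the polynomial degree in $v$, with the added simplification that every Fredholm-type obstruction vanishes for parity reasons.

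First I would verify that $G := -{\cal A}_1 F_2 - {\cal A}_2 F_1 = \Re(\widehat{G})$ for some $\widehat{G} \in {\cal F}_3$. Using the explicit formula \eqref{F1b} for $\widehat{F}_1$ and the description $F_2 = \Re(\widehat{F}_2)$ with $\widehat{F}_2 \in {\cal F}_2$ from Proposition \ref{prop adj2}, a direct calculation shows that ${\cal A}_1$ applied to a term $e^{in\psi}C(v,v)$ of $\widehat{F}_2$ produces a cubic-in-$v$ piece (from the $\langle a,v\rangle\partial_\psi$ part) and a linear-in-$v$ piece (from the $\sum (B\gamma)_j \partial_\psi\partial_{v_j}$ part), while ${\cal A}_1$ applied to a pure $e^{in\psi}$ term of $\widehat{F}_2$ produces only a linear-in-$v$ piece (the $n=0$ constant piece of $\widehat{F}_2$ is killed by $\partial_\psi$). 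Since ${\cal A}_2$ contains only $\psi$-derivatives, ${\cal A}_2 F_1$ inherits the $\langle b,v\rangle$ linear factor from $F_1$ and is again linear in $v$. Multiplication by $\cos^2\psi$, $\cos^4\psi$ or $\sin\psi\cos^3\psi$ couples the modes present in $\widehat{F}_1$ and $\widehat{F}_2$ only to modes $n\in\{-6,-4,-2,0,2,4,6\}$, so after taking real parts $\widehat{G} \in {\cal F}_3$. Crucially, no pure-constant-in-$v$ term arises.

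Next, I would decompose $\widehat{G} = \sum_{n\in\{0,2,4,6\}} e^{in\psi}(C_n(v,v,v) + D_n(v))$ and solve ${\cal A}_0 \widehat{F}_3 = \widehat{G}$ mode by mode. For each cubic term $e^{in\psi}C_n(v,v,v)$, Lemma~\ref{lem CtoDE3} produces a function in ${\cal F}_3$ whose image under ${\cal A}_0$ equals $e^{in\psi}C_n(v,v,v)$ plus an extra linear-in-$v$ remainder $e^{in\psi}\widetilde{E}_n(v)$. After this reduction the remaining work is to solve ${\cal A}_0 F = e^{in\psi}L(v) - \lambda_n$ with $L$ linear in $v$. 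The ansatz $F(v,\psi) = e^{in\psi}\langle f_n,v\rangle$ gives ${\cal A}_0 F = e^{in\psi}\langle(A^*-in\kappa_d I)f_n,v\rangle$. Since the left side has no constant-in-$v$ term, the equation forces $\lambda_n = 0$; then $f_n = (A^*-in\kappa_d I)^{-1}\ell$, where $\ell$ is the coefficient vector of $L$, is well defined because $in\kappa_d$ is purely imaginary (or zero) while every eigenvalue of $A^*$ has strictly negative real part, so $A^*-in\kappa_d I$ is always invertible.

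Summing the complex solutions gives $\widehat{F}_3 \in {\cal F}_3$ with ${\cal A}_0 \widehat{F}_3 = \widehat{G}$, and applying $\Re$ to both sides (legitimate since ${\cal A}_0$ has real coefficients) yields $F_3 = \Re(\widehat{F}_3)$ and $\lambda_3 = 0$ as required. The main obstacle is really just careful bookkeeping across the trigonometric modes and the degrees in $v$; the conceptual reason $\lambda_3 = 0$ (in contrast with $\lambda_2 \neq 0$ in Proposition \ref{prop adj2}) is parity: the Fredholm obstruction for ${\cal A}_0 F = G - \lambda$ requires $\lambda = \int G\,d(\mu\otimes d\psi/(2\pi))$, and here the relevant integrand is a combination of cubic and linear functions of $v$ in the $n=0$ mode, all of which have zero integral against the centered Gaussian $\mu$.
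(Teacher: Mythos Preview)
Your proposal is correct and follows essentially the same route as the paper: verify that $-{\cal A}_1F_2-{\cal A}_2F_1$ lies in (the real part of) ${\cal F}_3$, use Lemma~\ref{lem CtoDE3} to reduce the trilinear pieces to linear ones, and then solve the linear problems by the method of Section~\ref{sec first}. The paper's own argument is extremely brief (``Repeating the term by term approach\ldots''), so your write-up is in fact more detailed; in particular, your explicit algebraic solution $f_n=(A^\ast-in\kappa_dI)^{-1}\ell$ for the linear step is equivalent to the paper's integral formula $\int_0^\infty e^{tA^\ast}e^{-in\kappa_dt}\,dt$, and your closing parity remark (odd polynomials in $v$ integrate to zero against the centered Gaussian $\mu$) is a clean conceptual explanation of $\lambda_3=0$ that the paper does not make explicit.
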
 
   
\subsection{Proof of Theorem \ref{thm lam}} \label{sec proof}

When applying the adjoint method on a non-compact space such as $M$ the following result of Baxendale and Goukasian \cite[Prop 3]{BG01} is important.

\begin{proposition} \label{prop BG}
  Let $\{X(t):\ t\geq 0\}$ be a
diffusion process on a $\sigma$-compact manifold $M$ with
invariant probability measure $m$. Let ${\cal B}$ be an operator acting on $C^2(M)$ functions that agrees with the generator of $\{X(t): t\geq 0\}$ on $C^2$ functions with compact support.  Let $f \in
C^2(M)$, and assume $f$ and ${\cal B}f$ are $m$-integrable.  Suppose there exists a positive $G \in C^2(M)$ and $k <\infty$ satisfying ${\cal B}G(x) \le kG(x)$ and ${\cal B}f(x) \leq G(x)$ for all $x \in M$, and $f(x)/G(x) \to 0$ as $x \to \infty$. Then $\int_M {\cal B}f(x)dm(x) = 0$.
 \end{proposition}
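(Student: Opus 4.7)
The plan is to establish a Dynkin-type identity
\[
\E^x [f(X(t))] = f(x) + \E^x \int_0^t {\cal B}f(X(s))\,ds,
\]
valid for $m$-a.e.\ $x$ and every $t > 0$, and then integrate both sides against $dm(x)$. Invariance of $m$ makes the $x$-integral of the left-hand side constant in $t$ (equal to $\int f\,dm$), while Fubini plus invariance turn the integrated right-hand side into $\int f\,dm + t \int {\cal B}f\,dm$. Comparing forces $\int_M {\cal B}f\,dm = 0$.

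The key preliminary is a supersolution estimate on $G$. Applying It\^o's formula to $e^{-ks}G(X(s))$ and using ${\cal B}G \le kG$ shows this is a nonnegative local supermartingale, hence a genuine supermartingale; optional stopping at the exit time $\tau_n = \inf\{s \ge 0 : X(s) \notin K_n\}$ from a compact exhaustion $K_n \uparrow M$ gives
\[
\E^x\bigl[G(X(t \wedge \tau_n))\bigr] \le e^{kt} G(x) \qquad \text{for all } n \ge 1, \; t \ge 0.
\]
Because $X$ is confined to the compact set $K_n$ up to time $\tau_n$, It\^o applied directly to $f$ yields the stopped Dynkin identity $\E^x f(X(t \wedge \tau_n)) = f(x) + \E^x \int_0^{t \wedge \tau_n} {\cal B}f(X(s))\,ds$.

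Passing to the limit $n \to \infty$ is the technical heart of the argument. On the left side, the hypothesis $f/G \to 0$ at infinity means that for any $\eps > 0$, eventually $|f| \le \eps G$ on $M \setminus K_n$, so the escape contribution satisfies $|\E^x[f(X(\tau_n));\,\tau_n \le t]| \le \eps e^{kt}G(x)$; combined with $f(X(t \wedge \tau_n)) = f(X(t))$ on $\{\tau_n > t\}$, this gives $\E^x f(X(t \wedge \tau_n)) \to \E^x f(X(t))$. On the right side, the one-sided bound ${\cal B}f \le G$ together with $\E^x G(X(s)) \le e^{ks}G(x)$ dominates the positive part $({\cal B}f)^+$, while the negative part $({\cal B}f)^-$ is controlled $m$-a.e.\ in $x$ via Fubini and the assumed $m$-integrability of ${\cal B}f$, since $\int \int_0^t P_s ({\cal B}f)^-(x)\,ds\,dm(x) = t \int ({\cal B}f)^-\,dm < \infty$. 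Dominated convergence then closes the limit and produces the full Dynkin identity.

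Finally, integrating the Dynkin identity over $dm(x)$ and using invariance ($\int P_tf\,dm = \int f\,dm$ since $f \in L^1(m)$) plus Fubini ($\int_0^t\int \E^x[{\cal B}f(X(s))]\,dm(x)\,ds = t \int {\cal B}f\,dm$ since ${\cal B}f \in L^1(m)$) yields $0 = t \int_M {\cal B}f\,dm$ for all $t > 0$, whence the claim. The main obstacle is exactly the passage to the limit in the previous paragraph: the hypotheses supply only the one-sided pointwise bound ${\cal B}f \le G$ and the subcritical decay $f/G \to 0$, so to kill both the escape term in $f$ and the lower-side divergence in the drift integral one must pair the pointwise supersolution estimate on $G$ with the $m$-integrability of $f$ and ${\cal B}f$. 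The condition $f/G \to 0$ is the precise borderline regime in which the supersolution tool $G$ still controls $f$.
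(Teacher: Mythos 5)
Your argument is correct, but note that the paper itself gives no proof of this proposition: it is quoted verbatim from Baxendale and Goukasian \cite[Prop 3]{BG01}, so there is nothing in this paper to compare against line by line. Your proof is the natural self-contained one, and it is essentially the argument behind the cited result: localize Dynkin's formula at the exit times $\tau_n$ from a compact exhaustion (using that ${\cal B}$, being a local operator, agrees with the generator there), use the supersolution $G$ to get the supermartingale bound $\E^x[G(X(t\wedge\tau_n))]\le e^{kt}G(x)$, kill the escape term via $f/G\to 0$, dominate $({\cal B}f)^+$ by $G$ along the path and $({\cal B}f)^-$ by its $m$-integrability for $m$-a.e.\ $x$, and then integrate the resulting identity against $m$. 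Two points you use implicitly and should flag: (i) conservativeness, i.e.\ $\tau_n\uparrow\infty$ a.s., which is what makes "diffusion process on $M$ with invariant probability $m$'' meaningful but does not follow from the stated hypotheses on $G$ alone (it holds in the application, where ${\cal A}^\eps$ has an explicit Lyapunov function); and (ii) locality of ${\cal B}$, without which "agrees with the generator on compactly supported $C^2$ functions'' says nothing about ${\cal B}f$ for the non-compactly-supported $f$ at hand — again automatic here since ${\cal B}$ is a second-order differential operator. With those understood, the proof is complete.
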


\begin{remark}  For an example as to why we need something like Proposition \ref{prop BG} consider 
     $$
     dX_t = \left(-aX_t^3+ \frac{\sigma^2}{2X_t}\right)dt + \sigma dW_t
     $$
on $M = (0,\infty)$ with $f(x) = \log x$.  We get ${\cal B}f(x) = -ax^2$ but clearly $\int_0^\infty (-ax^2)dm(x) \neq 0$.
\end{remark}

\n{\it Proof of Theorem \ref{thm lam}(i)}. 
 So far we have shown the existence of functions $F_1$, $F_2$ and $F_3$ satisfying (\ref{adj1},\ref{adj2},\ref{adj3}) with $\lambda_1 = \lambda_3 = 0$ and $\lambda_2$ given by the formula \eqref{lam2 bis}.  Since $F_1$, $F_2$ and $F_3$ are smooth functions of $(v,\psi)$ we have 
   \begin{align}
   {\cal A}^\eps(\eps F_1+\eps^2 F_2+\eps^3 F_3) & =  ({\cal A}_0+\eps{\cal A}_1+\eps^2{\cal A}_2)(\eps F_1+\eps^2 F_2 +\eps^3 F_3) \nonumber \\
      & =  \eps {\cal A}_0F_1+ \eps^2({\cal A}_0 F_2+{\cal A}_1F_1) +\eps^3({\cal A}_0 F_3+ {\cal A}_1 F_2+{\cal A}_2 F_1) \nonumber \\
      & \qquad  + \eps^4( {\cal A}_1F_3 + {\cal A}_2F_2) + \eps^5 {\cal A}_2F_3 \nonumber \\
    & =  \eps Q_1+ \eps^2(Q_2-\lambda_2)+ \eps^4( {\cal A}_1F_3 + {\cal A}_2F_2) + \eps^5 {\cal A}_2F_3  \nonumber\\
    & =  Q^\eps + \zeta_2 -\eps^2 \lambda_2+ \eps^4( {\cal A}_1F_3 + {\cal A}_2F_2) + \eps^5 {\cal A}_2F_3. \label{proof1}
    \end{align}    
Fix $\eps >0$ for the moment.  From the explicit formula \eqref{F1} for $F_1$ and the characterizations of $F_2$ and $F_3$ in Propositions \ref{prop adj2} and \ref{prop adj3} we know that $\eps F_1+\eps^2 F_2+\eps^3 F_3$ grows at most like $\|v\|^3$, and that ${\cal A}_1F_3$ grows at most like $\|v\|^4$ and that ${\cal A}_2F_2$ grows at most like $\|v\|^2$ and that ${\cal A}_2 F_3$ grows at most like $\|v\|^3$.   We also know that the $v$ marginal of $m^\eps$ is the Gaussian measure $\mu$ with mean $0$ and variance matrix $R$, so that 
  \begin{equation} \label{moment}
  \int_M \|v\|^k dm^\eps(v,\psi) = \int_{\R^d} \|v\|^k d\mu(v) < \infty
  \end{equation}
for all $k \ge 0$.  Since
   $$
   {\cal A}^\eps (\|v\|^4) = {\cal G}(\|v\|^4) = 4\langle Av,v\rangle\|v\|^2+ 2\mbox{tr}(BB^\ast)\|v\|^2 + 4\|B^\ast v\|^2
   $$
we can apply Proposition \ref{prop BG} with ${\cal B} = {\cal A}^\eps$ and $f = \eps F_1+\eps^2 F_2+\eps^3 F_3$ and $G = c_1+ c_2\|v\|^4$ for suitable positive constants $c_1$ and $c_2$.   Integrating \eqref{proof1} with respect to the invariant probability $m^\eps$ gives
   \begin{equation} \label{proof2}
    0 = \int Q^\eps dm^\eps  + \zeta_2 -\eps^2 \lambda_2  + \eps^4 \int_M ( {\cal A}_1F_3 + {\cal A}_2F_2)dm^\eps + \eps^5 \int_M {\cal A}_2F_3 dm^\eps.
    \end{equation}
Using Khas'minskii's formula \eqref{Khas eps}, this gives 
    \begin{equation} \label{proof3}
    \lambda(\eps)  = -\zeta_2 + \eps^2 \lambda_2  - \eps^4 \int_M ( {\cal A}_1F_3 + {\cal A}_2F_2)dm^\eps - \eps^5 \int_M {\cal A}_2F_3 dm^\eps.
    \end{equation}
Finally the growth estimates above on ${\cal A}_1 F_3$ and ${\cal A}_2F_2$ and ${\cal A}_2F_3$ together with \eqref{moment} imply that the integrals
     $$
     \int_M ( {\cal A}_1F_3 + {\cal A}_2F_2)dm^\eps \quad \mbox{ and } \quad \int_M {\cal A}_2F_3 dm^\eps
     $$
     are bounded uniformly in $\eps$.  At this point we let $\eps \to 0$ in \eqref{proof3} and obtain 
        $$
        \lambda(\eps) = \eps^2 \lambda_2 + O(\eps^4)
        $$
as $\eps \to 0$.  This completes the proof of Theorem \ref{thm lam}. \qed
\s

\n {\it Proof of Theorem \ref{thm lam}(ii).}  From \eqref{adj2bis} we see that ${\cal A}_0F_2$ is a finite sum of terms of the form $\Re\left(e^{i n \psi}c\right)$ or $\Re\left(e^{i n \psi}C(v,v)\right)$ where the constants $c$ and the coefficients in the bilinear mappings $C$ are continuous functions of $\kappa_d$ for $ 0  < \kappa_d < \infty$.  The construction in Section \ref{sec second} implies that the same is true for $F_2$, and then also for ${\cal A}_2F_2$.  So given $0 < c_1 < c_2 <\infty$ there exists $K_1$ such that $|{\cal A}_2F_2(\psi,v)| \le K_1(1+\|v\|^2)$ whenever $c_1 \le \kappa_d \le c_2$.  

Similarly from \eqref{adj3bis} we see that ${\cal A}_0F_3$ is a finite sum of terms of the form $\Re\left(e^{i n \psi}D(v)\right)$ or $\Re\left(e^{i n \psi}C(v,v,v)\right)$ where the coefficients in the linear mappings $D$ and trilinear mappings $C$ are continuous functions of $\kappa_d$ for $ 0  < \kappa_d < \infty$.  The construction in Section \ref{sec third} implies that the same is true of $F_2$.  So given $0 < c_1 < c_2 <\infty$ there exists $K_2$ and $K_3$ such that $|{\cal A}_1F_3(\psi,v)| \le K_2(1+\|v\|^4)$ and $|{\cal A}_2F_3(\psi,v)| \le K_3(1+\|v\|^3)$ whenever $c_1 \le \kappa_d \le c_2$.  

These estimates can now be used in \eqref{proof3} to give the uniform estimate \eqref{lam eps unif}.  \qed

\subsection{Proof of Proposition \ref{prop Vdot}} \label{sec V}

Taking $a=A^\ast \alpha$ and $\gamma = B^\ast \alpha$ in \eqref{lam2 omega} we get 
    \begin{align*}
   \lambda_2(\omega) & =  \frac{\pi}{\omega^2} \Bigl( \langle A^\ast \alpha,\widehat{S}_A(\omega)RA^\ast \alpha\rangle  + \langle A^\ast \alpha,\widehat{S}_A(\omega) BB^\ast \alpha \rangle +\frac{\|B^\ast \alpha\|^2}{2\pi}\Bigr) \\
   & =   \frac{\pi}{\omega^2} \Bigl( \langle \alpha,A\widehat{S}_A(\omega)RA^\ast \alpha\rangle  + \langle \alpha,A\widehat{S}_A(\omega) BB^\ast \alpha \rangle +\frac{\langle \alpha,BB^\ast \alpha \rangle}{2\pi}\Bigr).
  \end{align*}
The covariance matrix $R$ in \ref{R} satisfies 
  \begin{equation} \label{R2}
   AR+RA^\ast = -BB^\ast,
   \end{equation}
see Gardiner \cite[Sect 4.4]{Gard}.  Substituting for $BB^\ast$ and noting $\langle \alpha,BB^\ast \alpha \rangle = -\langle \alpha, AR\alpha \rangle - \langle \alpha,RA^\ast \alpha \rangle = -2 \langle \alpha ,AR\alpha \rangle$ we get 
      \begin{equation} \label{lambda2 V2}  
   \lambda_2(\omega) =   \frac{\pi}{\omega^2}\Bigl(-  \langle \alpha, A\widehat{S}_A(\omega)AR \alpha\rangle -\frac{\langle \alpha ,AR\alpha\rangle}{\pi} \Bigr).
   \end{equation}   
Integrating by parts twice gives         
 $$
    A \widehat{S}_A(\omega)A  =  \frac{1}{\pi}\int_0^\infty Ae^{tA}A \cos \omega t\,dt 
      =  -\frac{1}{\pi}A - \frac{\omega^2}{\pi} \int_0^\infty e^{tA}\cos \omega t\,dt = -\frac{1}{\pi}A - \omega^2 \widehat{S}_A(\omega). 
      $$
Substituting for $A\widehat{S}_A(\omega)A$ in \eqref{lambda2 V2} gives
     \begin{equation} \label{lam2 R}
     \lambda_2(\omega) = \pi \langle \alpha, \widehat{S}_A(\omega)R\alpha\rangle.
 \end{equation}
         Let $R(t)$ denote the autocovariance matrix for $\{v(t): t \in \R\}$.  Then $R(0)$ is the covariance matrix $R$ of the stationary version, and $R(t) = e^{tA}R$ for $t \ge 0$.  The autocovariance function for $V$ is
        $$
        R_V(t) = \E[V(t)V(0)] = \langle \alpha,R(t)\alpha \rangle = \langle \alpha,e^{tA}R \alpha  \rangle
        $$   
for $t \ge 0$. The power spectral density of $V$ is 
    $$
    S_V(\omega) =  \frac{1}{\pi}\int_0^\infty R_V(t) \cos \omega t \,dt  
    = \langle \alpha, \widehat{S}_A(\omega)R \alpha \rangle,
   $$
and so 
$$
     \lambda_2(\omega) = \pi \langle \alpha, \widehat{S}_A(\omega)R\alpha\rangle =  \pi S_V(\omega),
$$
as required.  \qed

\subsection{Proof of Proposition \ref{prop psd xi}}
 
We use the stationary version of $v$ given by 
    $$
    v(t) = \int_{-\infty}^t e^{(t-s)A}BdW(s) = \sum_{\ell = 1}^m \int_{-\infty}^t e^{(t-s)A} Be_\ell dW_\ell(s),
   $$
where $e_1,\ldots,e_m$ denote the standard basis vectors in $\R^m$.  Then   
   \begin{align*}
   \int_{-\infty}^\infty \langle a,v(u)\rangle \psi_\delta(t-u)du
    & =  \sum_{\ell = 1}^m \int_{-\infty}^\infty \left(\int_{-\infty}^u \left\langle a, e^{(u-s)A} Be_\ell \right\rangle dW_\ell(s)\right)\psi_\delta(t-u)du\\
    & =  \sum_{\ell = 1}^m \int_{-\infty}^\infty \left(\int_s^\infty \left\langle a,e^{(u-s)A} Be_\ell \right\rangle \,\psi_\delta(t-u)du \right)dW_\ell(s),
    \end{align*}
and so 
       $$
    \xi_\delta(t) = \sum_{\ell = 1}^m \int_{-\infty}^\infty \left( \int_s^\infty \langle a,e^{(u-s)A} Be_\ell\rangle \,\psi_\delta(t-u)du +  \gamma_\ell \psi_\delta(t-s)\right) dW_\ell(s)  \equiv \sum_{\ell = 1}^m \xi_{\delta,\ell}(t), 
    $$
say,   Since the $\{W_\ell: 1 \le \ell \le m\}$ are independent, then the $\{\xi_{\delta,\ell}: 1 \le \ell \le m\}$ are independent, and the autocovariance function for $\xi_\delta$ is the sum of the autocovariance functions for each of the $\xi_{\delta,\ell}$.  The computation of each $E[\xi_{\delta,\ell}(t)\xi_{\delta,\ell}(0)]$ can be broken down into 4 terms.  Therefore   
\begin{align*}
  \lefteqn{\E[\xi_\delta(t)\xi_\delta(0)]} \hspace{5ex}  \\
   & =  \sum_{\ell = 1}^\infty \E[\xi_{\delta,\ell}(t)\xi_{\delta,\ell}(0)]\\
   & =  \sum_{\ell = 1}^m \int_{-\infty}^\infty \left(\int_s^\infty \langle a,e^{(u-s)A} Be_\ell\rangle \,\psi_\delta(t-u)du \right)\left(\int_s^\infty \langle a,e^{(w-s)A} Be_\ell\rangle \,\psi_\delta(-w)dw \right)ds \\
   &  \qquad + \sum_{\ell = 1}^m \int_{-\infty}^\infty \left(\int_s^\infty \langle a,e^{(u-s)A} Be_\ell\rangle \,\psi_\delta(t-u)du \right) \gamma_\ell \psi_\delta(-s)ds \\
   &  \qquad + \sum_{\ell = 1}^m \int_{-\infty}^\infty \left(\int_s^\infty \langle a,e^{(u-s)A} Be_\ell\rangle \,\psi_\delta(-u)du \right) \gamma_\ell \psi_\delta(t-s)ds \\
       &  \qquad + \sum_{\ell = 1}^m \gamma_\ell^2 \int_{-\infty}^\infty \psi_\delta(t-s)\psi_\delta(-s)\,ds \\ 
     & =  \int_{-\infty}^\infty \left( \int_s^\infty \int_s^\infty  \langle a, e^{(u-s)A}BB^\ast e^{(w-s)A^\ast}a \rangle \psi_\delta(t-u)\psi_\delta(-w)dudw \right)\,ds \\
     &   \qquad +  \int_{-\infty}^\infty \left(\int_s^\infty \langle a,e^{(u-s)A} B \gamma \rangle \,\psi_\delta(t-u)du \right)  \psi_\delta(-s)ds \\ 
     &  \qquad  + \int_{-\infty}^\infty \left(\int_s^\infty \langle a,e^{(u-s)A} B\gamma\rangle \,\psi_\delta(-u)du \right)  \psi_\delta(t-s)ds \\
       & \qquad +  \int_{-\infty}^\infty \|\gamma\|^2\psi_\delta(t-s)\psi_\delta(-s)\,ds\\
       & =  \int_{-\infty}^\infty \Bigl( I_\delta^{(1)}(s,t) + I_\delta^{(2)}(s,t)+I_\delta^{(3)}(s,t) + I_\delta^{(4)}(s,t)\Bigr)ds,
   \end{align*}
say.  Then
   $$  S_{\xi_\delta}(\omega)  =  \frac{1}{\pi}\int_0^\infty \E[\xi_\delta(t) \xi_\delta(0)] \cos \omega t\,dt
   = \sum_{i=1}^4 \frac{1}{\pi} \int_0^\infty \left(\int_{-\infty}^\infty I_\delta^{(i)}(s,t)\,ds\right) \cos \omega t\,dt. $$
We consider the limit as $\delta \to 0$ for each of the four terms separately.

\paragraph{$i=1$:}
   $$
   I_\delta^{(1)}(s,t) =   \int_s^\infty \int_s^\infty  \left\langle a, e^{(u-s)A}BB^\ast e^{(w-s)A^\ast}a \right\rangle \psi_\delta(t-u)\psi_\delta(-w)\, dudw.
   $$   
Recalling $t \ge 0$ we have 
   $$
   I_\delta^{(1)}(s,t) \to \left\{ \begin{array}{cl}
        \langle a, e^{(t-s)A}BB^\ast e^{-sA^\ast}a \rangle & \mbox{ if } s < 0 \\
            0 & \mbox{ if }s > 0
                \end{array} \right.
                $$
as $\delta \to 0$.  Also $ I_\delta^{(1)}(s,t)=0$ if $s > \delta$ and 
\begin{align*}
 \left|I_\delta^{(1)}(s,t)\right| & \le \sup_{-\delta \le r_1,r_2 \le \delta} \left| \left\langle a,e^{(t+r_1-s)A}BB^\ast e^{(r_2-s)A^\ast} a \right\rangle \right|\\
 &  \le \left(\sup_{|r| \le \delta} \|e^{rA^\ast} a\|\right)^2 \|e^{(t-s)A}BB^\ast e^{-sA^\ast}\|.
\end{align*}
 Therefore by the dominated convergence theorem we have 
   $$
     \frac{1}{\pi} \int_0^\infty \left(\int_{-\infty}^\infty  I_\delta^{(1)} (s,t)\,ds\right)\cos \omega t\,dt
     \to  \frac{1}{\pi} \int_0^\infty \left( \int_{-\infty}^0  \langle a, e^{(t-s)A}BB^\ast e^{-sA^\ast}a \rangle \,ds\right) \cos \omega t\,dt
     $$
as $\delta \to 0$.  Since
$$   
 \int_{-\infty}^0 \langle a, e^{(t-s)A}BB^\ast e^{-sA^\ast}a \rangle\,ds 
  = \left\langle a, e^{tA}\left(\int_{-\infty}^0 e^{-sA}BB^\ast e^{-sA^\ast}\,ds\right)a \right\rangle
  = \langle a,e^{tA}Ra\rangle
  $$
where $R$ is the covariance matrix for the invariant probability measure for $v(t)$, see \eqref{R}, we have 
   $$
   \lim_{\delta \to 0} \frac{1}{\pi} \int_0^\infty\left( \int_{-\infty}^\infty  I_\delta^{(1)}(s,t)\,ds\right)\cos \omega t\,dt
   = \int_0^\infty \langle a,e^{tA}Ra\rangle \cos \omega t\,dt  = \langle a,\widehat{S}_A(\omega)Ra\rangle.
   $$

\paragraph{$i=2$:} 
   $$
  \int_{-\infty}^\infty I_\delta^{(2)}(s,t)\,ds = \int_{-\infty}^\infty \left( \int_s^\infty  \langle a, e^{(u-s)A}B \gamma \rangle \psi(t-u)\,du\right)\psi(-s)\,ds.
   $$   
For $t > 0$ we have 
   $$
   \int_{-\infty}^\infty I_\delta^{(2)}(s,t)\,ds \to 
        \langle a, e^{tA}B \gamma \rangle 
                $$
as $\delta \to 0$.  
Also 
  $$
   \left| \int_{-\infty}^\infty I_\delta^{(2)}(s,t)\,ds\right|
      \le \sup_{|r| \le 2 \delta}\left|\langle a, e^{(t+r)A}B \gamma \rangle  \right| 
      \le \left(\sup_{|r| \le 2\delta}\|e^{rA^\ast}a\|\right) \|e^{tA}B \gamma\|.
     $$ 
Therefore by the dominated convergence theorem we have 
   $$
    \frac{1}{\pi} \int_0^\infty \left(\int_{-\infty}^\infty  I_\delta^{(2)} (s,t) \,ds\right)\cos \omega t\,dt
   \to  \frac{1}{\pi} \int_0^\infty \langle a, e^{tA}B \gamma \rangle  \cos \omega t\,dt = \langle a ,\widehat{S}_A(\omega)B\gamma \rangle
    $$
as $\delta \to 0$.

\paragraph{$i=3$: }
  $$
  \int_{-\infty}^\infty I_\delta^{(3)}(s,t)\,ds =  \int_{-\infty}^\infty \left(\int_s^\infty \langle a,e^{(u-s)A} B\gamma\rangle \,\psi_\delta(-u)du \right)  \psi_\delta(t-s)ds.
  $$
Note that $\left|\int_{-\infty}^\infty I_\delta^{(3)}(s,t)\,ds\right| \le \sup_{r \ge 0} |\langle a,e^{rA}B \gamma \rangle | < \infty$.  Also, if $0 < 2\delta < t$ then $\int_{-\infty}^\infty I_\delta^{(3)}(s,t)\,ds = 0$.       
Therefore by the dominated convergence theorem we have 
   $$
    \frac{1}{\pi} \int_0^\infty \left(\int_{-\infty}^\infty  I_\delta^{(3)} (s,t)\,ds\right)\cos \omega t\,dt
   \to  0
   $$
as $\delta \to 0$. 
   
\paragraph{$i=4$: } The substitution $u=s-t$ gives
   $$
  \int_{-\infty}^\infty I_\delta^{(4)}(s,t)\,ds  = \|\gamma\|^2 \int_{-\infty}^\infty \gamma_\delta(t-s)\psi_\delta(-s)ds = \int_{-\infty}^\infty I_\delta^{(4)}(s,-t)\,ds,
  $$   
and so
  \begin{align*}
  \frac{1}{\pi} \int_0^\infty \left(\int_{-\infty}^\infty  I_\delta^{(4)} (s,t)\,ds\right)\cos \omega t\,dt    
   & = \frac{1}{2\pi} \int_{-\infty}^\infty \left(\int_{-\infty}^\infty  I_\delta^{(4)} (s,t)\,ds\right)\cos \omega t\,dt \\
   & =  \frac{\|\gamma\|^2}{2\pi} \int_{-\infty}^\infty \int_{-\infty}^\infty \psi_\delta(t-s)\psi_\delta(-s) \cos \omega t \,dsdt
  \end{align*} Notice $\int_{-\infty}^\infty \psi_\delta(t-s)\psi_\delta(-s)ds = 0$ if $|t| > 2\delta$.  Also
  \begin{align*}
     \int_{-\infty}^\infty\int_{-\infty}^\infty \psi_\delta(t-s)\psi_\delta(-s)\,dsdt
     & = \int_{-\infty}^\infty \psi_{\delta}(-s)\left(\int_{-\infty}^\infty \psi_\delta(t-s)\,dt\right)ds\\
     & = \int_{-\infty}^\infty \psi_{\delta}(-s)\,ds = 1.
 \end{align*}   
Therefore
 \begin{align*}
    0 &  \le \frac{\|\gamma\|^2}{2\pi} - \frac{1}{\pi} \int_0^\infty \left(\int_{-\infty}^\infty  I_\delta^{(4)} (s,t)\,ds\right)\cos \omega t\,dt \\
    & =  \frac{\|\gamma\|^2}{2\pi} \int_{-\infty}^\infty \int_{-\infty}^\infty \psi_\delta(t-s)\psi_\delta(-s) \bigl(1-\cos \omega t \bigr) \,dsdt\\
    & \le  \frac{\|\gamma\|^2}{2\pi} \sup_{|t| \le 2\delta}\big|1-\cos \omega t\big|
  \end{align*} 
and so
  $$
    \frac{1}{\pi} \int_0^\infty \left(\int_{-\infty}^\infty  I_\delta^{(4)} (s,t)\,ds\right)\cos \omega t\,dt
   \to  \frac{\|\gamma\|^2}{2\pi}
   $$
as $\delta \to 0$.  

Together these four limits give \eqref{psd lim}, and the rest of Proposition \ref{prop psd xi} follows directly from the definition \eqref{lam2 omega}.  \qed

\section*{Acknowledgement}
The work of the second author was partially supported by National Sciences and Engineering Research Council of Canada Discovery grant 50503-10802.

\bibliographystyle{plain}
\bibliography{autoparametric}

    \end{document}